\setlist[enumerate]{label=$(\mathrm{\arabic*})$, leftmargin=*}
\setlist[itemize]{leftmargin=*}
\newtheorem{thm}{Theorem}[section]
\newaliascnt{theo}{thm}
\newtheorem{theo}[theo]{Theorem}
\newaliascnt{cor}{thm}
\newtheorem{cor}[cor]{Corollary}
\newaliascnt{defn}{thm}
\newtheorem{defn}[defn]{Definition}
\newaliascnt{prop}{thm}
\newtheorem{prop}[prop]{Proposition}
\newaliascnt{lem}{thm}
\newtheorem{lem}[lem]{Lemma}
\newaliascnt{conj}{thm}
\newtheorem{conj}[conj]{Conjecture}
\newaliascnt{que}{thm}
\newaliascnt{ass}{thm}
\newtheorem{ass}[ass]{Assumption}
\newaliascnt{defnot}{thm}
\newaliascnt{princ}{thm}
\newaliascnt{notn}{thm}
\newtheorem{notn}[notn]{Notation}
\newaliascnt{exmp}{thm}
\theoremstyle{remark}
\newaliascnt{rem}{thm}
\newtheorem{rem}[rem]{Remark}
\newtheorem{exm}[thm]{Example}
\newcommand{\sO}{{\mathcal O}}
\newcommand{\A}{{\mathbb A}}
\newcommand{\G}{{\mathbb G}}
\newcommand{\Q}{{\mathbb Q}}
\newcommand{\Z}{{\mathbb Z}}
\newcommand{\Ker}{{\rm Ker}}
\newcommand{\alb}{{\rm alb}}
\newcommand{\CH}{{\rm CH}}
\newcommand{\inj}{\hookrightarrow}
\newcommand{\Pic}{{\rm Pic}}
\newcommand{\Div}{{\rm Div}}
\newcommand{\Ext}{{\rm Ext}}
\newcommand{\Hom}{{\rm Hom}}
\newcommand{\Spec}{{\rm Spec \,}}
\newcommand{\sing}{{\rm sing}}
\newcommand{\ab}{\rm ab}
\newcommand{\Gal}{{\rm Gal}}
\newcommand{\supp}{{\rm supp}\,}
\newcommand{\NS}{{\operatorname{NS}}}
\newcommand{\Zar}{{\text{\rm Zar}}}
\newcommand{\Sch}{{\operatorname{\mathbf{Sch}}}}
\newcommand{\Sm}{{\mathbf{Sm}}}
\newcommand{\Sym}{{\operatorname{\rm Sym}}}
\newcommand{\eff}{{\operatorname{\rm eff}}}
\newcommand{\Nis}{{\operatorname{Nis}}}
\newcommand{\ds}{{/\kern-3pt/}}
\newcommand{\res}{{\operatorname{res}}}
\newcommand{\cok}{{\operatorname{coker}}}
\newcommand{\sm}{{\operatorname{sm}}}
\newcommand{\Sus}{{\operatorname{sus}}}
\newcommand{\pr}{{\rm{pr}}}
\renewcommand{\dim}{\text{\rm dim}}
\newcommand{\tuborg}{\left\{\begin{array}{ll}}
	\newcommand{\sluttuborg}{\end{array}\right.}
\newcommand{\Kum}{{\rm Kum}}
\newcommand{\tor}{{\rm tor}}
\newcommand{\img}{{\rm Im}}
\newcommand{\dv}{{\rm div}}
\newcommand{\Aut}{{\rm Aut}}
\newcounter{elno}
\newcounter{elno-abc}   
\newcounter{elno-abc-prime}
\begin{document}
	\title{Zero-cycles on quasi-projective surfaces over $p$-adic fields}
		\author{Evangelia Gazaki and Jitendra Rathore}
		\address{Department of Mathematics, University of Virginia, 221 Kerchof Hall, 141 Cabell Dr., Charlottesville,
			VA, 22904, USA.}
			\email{eg4va@virginia.edu }
		\address{Department of Mathematics, South Hall, Room 6607, University of California Santa Barbara, CA, 93106-3080, USA.}
		\email{jitendra@ucsb.edu}

	\keywords{$p$-adic fields, 0-cycles, Milnor $K$-groups}

	\maketitle

\begin{abstract} A conjecture of Colliot-Th\'{e}l\`{e}ne predicts that for a smooth projective variety $X$ over a finite extension $k$ of  $\Q_p$ the kernel of the Albanese map $\CH_0(X)^{\deg=0}\to Alb_X(k)$ is the direct sum of a divisible group and a finite group. In this article we show that if $\pi:X\dashrightarrow Y$ is a generically finite rational map between smooth projective surfaces and the conjecture is true for $X\otimes_k L$ for every finite extension $L/k$, then it is true for $Y$. Using work of Raskind and Spiess, this proves the conjecture for surfaces that are geometrically dominated by products of curves, under some assumptions on the reduction type of the Jacobians. The method involves studying  similar questions for an open subvariety $U$ of a projective surface $X$ by replacing the Chow group of $0$-cycles  with Suslin's singular homology $H_0^\Sus(U)$. 
\end{abstract}

	\setcounter{tocdepth}{1}
	
	\tableofcontents

	\section{Introduction}

Let $X$ be a smooth projective variety over a $p$-adic field $k$. The Chow group of $0$-cycles, $\CH_{0}(X)$, is an important geometric invariant associated to $X$ and determining its structure for a general  $X$ remains a challenging task. This group admits a natural two-step filtration:
     \begin{center}
     	$0\subseteq F^{2}(X) \subseteq F^{1}(X) \subseteq \CH_{0}(X),$
     \end{center}
 where $F^{1}(X) :=  \ker(\CH_{0}(X) \xrightarrow{\deg} \mathbb{Z} )$ is the kernel of the degree map, and $F^{2}(X) :=  \ker(F^{1}(X) \xrightarrow{\alb_X} Alb_{X}(k) )$ is the kernel of the Albanese map of $X$. Here, $Alb_{X}$ is the Albanese variety assiociated to $X$. By a classical theorem of Mattuck \cite{Mat}, the structure of the group $Alb_{X}(k)$ is known. Therefore, the study of $F^{2}(X)$ becomes central.
 The following well-known conjecture of Colliot-Th\'{e}l\`{e}ne gives a prediction about the structure of the group $F^{2}(X)$ over $p$-adic fields.

 \begin{conj}\label{mainconj} (\cite[1.4 (d, e, f)]{CT2}, \cite[Conjecture 3.5.4]{RS})  Let $X$ be a smooth projective variety  over a finite extension $k$ of the $p$-adic field $\Q_p$. Then  
 	the kernel $F^{2}(X)$ of the Albanese map has a decomposition $F^{2}(X) \cong F \oplus D$, where $F$ is a finite group and $D$ is a divisible group. 
 	\end{conj}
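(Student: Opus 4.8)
The plan is to reduce the asserted decomposition to a single finiteness statement and then to attack that finiteness through the $\ell$-adic cycle class map and the arithmetic of $Alb_X$. The first step is a purely group-theoretic reduction. By Baer's criterion an abelian group is injective as a $\Z$-module precisely when it is divisible, so if $D$ denotes the maximal divisible subgroup of $F^2(X)$, the extension
\[
0 \to D \to F^2(X) \to F^2(X)/D \to 0
\]
splits automatically. Hence $F^2(X) \cong (F^2(X)/D) \oplus D$, and the entire content of \autoref{mainconj} is that the reduced quotient $F^2(X)/D$ is \emph{finite}. This isolates the target: one must bound the maximal non-divisible quotient of $F^2(X)$, uniformly.

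Second, I would analyze this quotient through the $\ell$-adic behaviour of cycles. For each prime $\ell$ the cycle class map places $\CH_0(X)=\CH^2(X)$, together with the filtration step $F^2(X)$, inside continuous $\ell$-adic étale cohomology; the Hochschild--Serre filtration matches the filtration $F^2\subseteq F^1\subseteq \CH_0$, so that the Albanese quotient $F^1/F^2$ is governed by $H^1(\Gal(\bar k/k), H^3_{\et}(X_{\bar k}, \Q_\ell/\Z_\ell(2)))$ while the receptacle for $F^2(X)$ itself is $H^2(\Gal(\bar k/k), H^2_{\et}(X_{\bar k}, \Q_\ell/\Z_\ell(2)))$. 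By local Tate duality and the finiteness of Galois cohomology over a $p$-adic field, the latter is cofinitely generated, its non-divisible part is finite for every $\ell$, and it should vanish for all but finitely many $\ell$. The plan is to show that, after killing divisibles, $F^2(X)/D$ injects into the product of these finite groups, forcing finiteness. Two ingredients are needed: (i) finiteness of the torsion $F^2(X)\{\ell\}$ for each $\ell$ together with its vanishing for almost all $\ell$; and (ii) control of the torsion-free part of $F^2(X)$. For (ii) I would invoke Mattuck's theorem: since $Alb_X(k)$ is, up to finite index, a free $\Z_p$-module of finite rank, the torsion-free non-divisible directions of $F^1(X)$ are already exhausted by the image of the Albanese map $\alb_X$, so the remaining task is to rule out any further torsion-free non-divisible quotient surviving in the kernel $F^2(X)$.

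The genuine obstacle, and the reason \autoref{mainconj} is a conjecture rather than a theorem, lies in exactly that last point: a torsion-free non-divisible quotient of $F^2(X)$ is invisible to every $H^i(k,-;\Q_\ell/\Z_\ell)$, so the uniform finiteness sought in the second step cannot be extracted from cohomology with torsion coefficients alone. Ruling it out requires integral $p$-adic Hodge-theoretic control of $H^2_{\et}(X_{\bar k})$ of Bloch--Kato type, which is not available for an arbitrary surface. The situation improves decisively when $X$ carries extra structure: when $X$ is dominated by a product of two curves $C_1\times C_2$, the group $F^2$ is governed by a Somekawa $K$-group $K(k;J_{C_1},J_{C_2})$, which linearises the problem and allows Mattuck's decomposition of the $J_{C_i}(k)$ to produce the required divisibility directly, as in the work of Raskind and Spiess. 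Thus the realistic leverage for overcoming the obstacle is to establish the statement first in the presence of such structure and then to transport it along generically finite maps, rather than to mount a direct cohomological assault on a general $X$.
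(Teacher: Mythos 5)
The statement you were asked to prove is \autoref{mainconj}, which is an open conjecture of Colliot-Th\'{e}l\`{e}ne: the paper contains no proof of it, and neither does your text --- to your credit, you say so explicitly. So the only fair assessment is of your reduction and your program. Your first step is correct and matches the paper exactly: divisible groups are injective $\Z$-modules, so $F^2(X)\cong F^2(X)_{\dv}\oplus F^2(X)/F^2(X)_{\dv}$ and the conjecture is precisely the finiteness of the reduced quotient. The paper sharpens this (for surfaces) using \cite[Theorem 8.1]{CT1}, the finiteness of $\CH_0(X)_n$, to reduce ``finite'' further to ``torsion of finite exponent''; this weaker target, propagated through \autoref{lem:F+D} and \autoref{finiteness-n-torsion-sus}, is the workhorse of all the paper's arguments, and your sketch would benefit from making that reduction explicit rather than aiming at finiteness directly. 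Your middle, cohomological step is the one that fails, for exactly the reason you yourself identify: a torsion-free non-divisible quotient of $F^2(X)$ is invisible to $H^i(\Gal(\overline{k}/k),-)$ with $\Q_\ell/\Z_\ell$-coefficients, so the proposed injection of $F^2(X)/D$ into a product of finite Galois cohomology groups has no construction behind it, and the asserted vanishing of the relevant $H^2$ for almost all $\ell$ is not available. Your appeal to Mattuck to ``exhaust the torsion-free non-divisible directions of $F^1(X)$'' is also only heuristic; what is actually known is that $\mathrm{coker}(\alb_X)$ is finite (Saito--Sujatha, cited in the paper), which controls the quotient $F^1/F^2$, not the kernel.

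Where comparison with the paper is meaningful is your endgame, which correctly anticipates the paper's route to its actual theorems: prove the conjecture for products of curves via Somekawa $K$-groups (Raskind--Spiess), then transport along generically finite maps (\autoref{thm:main1}, \autoref{cor:newevidence}). But the transport step is not routine, and the mechanism the paper builds for it is entirely absent from your sketch. A generically finite rational map $\pi:X\dashrightarrow Y$ of smooth projective surfaces is only finite on opens $U\to V$, and one cannot in general resolve it to a finite morphism between smooth projective models --- see \autoref{rem:resolutionsings} and Koll\'ar's example --- so the naive pushforward--pullback argument with $\pi_\star\circ\pi^\star=n$ on $\CH_0$ simply does not exist on the projective level. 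The paper's main technical contribution is to replace $\CH_0$ by Suslin homology $H_0^{\Sus}$ on the open pieces and to prove the equivalence \autoref{thm:main2} between \autoref{mainconj} for $X_L$ (all finite $L/k$) and its quasi-projective analogue \autoref{mainconj-quasiprojective} for $U_L$; this requires the generalized Albanese with its toric part, the structure of $SK_1(D)$ for the snc boundary divisor via Bloch--Saito class field theory (\autoref{prop:SK1}), the N\'{e}ron--Severi computation \autoref{prop:NSXcomputations} feeding decomposable $(2,1)$-cycles into the toric kernel, and the lemmas on continuous maps of $p$-adic tori. Only with that bridge in place does your $\pi_\star,\pi^\star$ argument run, on $F^1(U)$ and $F^1(V)$. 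Without it, your program stalls exactly at the quotient singularities it is meant to handle.
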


A celebrated result in this direction was obtained by S. Saito and K. Sato (\cite[Theorem 0.3]{SS})\footnote{\cite{SS} assumes that $X$ has a semistable model over the ring of integers of $k$. See \cite[Th\'{e}or\`{e}me 0.2]{CT3} for the generalization to all smooth projective varieties.}, who proved that the degree $0$ subgroup $F^1(X)$ has a decomposition $F^1(X)\cong F\oplus D$, where $F$ is a finite group and $D$ is a group divisible by any integer coprime to the residue characteristic $p$. When $X$ is a curve, the Albanese map is an isomorphism, and hence the conjecture is trivially true. In higher dimensions the full conjecture is only known in the following very limited cases.

 \begin{enumerate}
 \item\label{item1} (\cite[Theorem 1.1]{RS}) For a product $X=C_1\times\cdots\times C_d$ of smooth projective curves with $X(k)\neq\emptyset$  and such that for all $i\in\{1,\ldots, d\}$ the Jacobian variety $J_i$ of $C_i$ has a mixture of  good ordinary and split multiplicative reduction (see \autoref{reddef}). When all factors are elliptic curves, this result has been extended to allow one curve to have good supersingular reduction (\cite[Theorem 1.2]{GL}). 
 \item (\cite[Theorem 1.2]{GL24}) For a $K3$ surface $X$ which is geometrically isomorphic to the Kummer surface $\Kum(A)$ of an abelian surface $A$ isogenous to a product of elliptic curves satisfying the reduction assumptions as above. 
 \end{enumerate}

\autoref{mainconj} can be restated as saying that the group $F^2(X)/F^2(X)_{\dv}$ is finite, where $F^2(X)_{\dv}$ is the maximal divisible subgroup of $F^2(X)$.
When $X$ is a surface, a result of Colliot-Th\'{e}l\`{e}ne about the $n$-torsion subgroup of $\CH_0(X)$ (\cite[Theorem 8.1]{CT1}) allows to reduce \autoref{mainconj} to showing the weaker property that the group $F^2(X)/F^2(X)_{\dv}$ is torsion of finite exponent. This in turn allows us to study surfaces with a certain geometric behavior; for example some of the assumptions for $C_1\times C_2$ can be weakened. 

  In this article we establish Conjecture \ref{mainconj} for a much larger collection of surfaces. Our first theorem is the following. 
 
 \begin{theo}\label{thm:main1intro} (see \autoref{thm:main1}) Let $X, Y$ be smooth projective surfaces over a $p$-adic field $k$. Suppose there is a generically finite rational map $\pi:X\dasharrow Y$. If \autoref{mainconj} is true for $X\otimes_k L$ for every finite extension $L/k$, then it is true for $Y$. The same result holds true if the map $\pi$ is defined over a finite extension of the base field. 
 \end{theo}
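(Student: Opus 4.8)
The plan is to reduce Conjecture \ref{mainconj} for $Y$ to the assertion that $F^2(Y)/F^2(Y)_{\dv}$ has finite exponent, and then to extract that bound by a transfer argument against $X$. By the reduction recalled after Conjecture \ref{mainconj} (resting on \cite[Theorem 8.1]{CT1}), it suffices to show that $F^2(Y)/F^2(Y)_{\dv}$ is torsion of finite exponent. First I would dispose of the fact that $\pi$ is only rational. Since $X$ and $Y$ are smooth projective surfaces, the indeterminacy of $\pi$ is resolved by a finite sequence of blow-ups at closed points: there is a smooth projective surface $X'$, a birational morphism $\rho\colon X'\to X$ (a composite of point blow-ups), and a genuine generically finite morphism $\pi'\colon X'\to Y$ of the same degree $d$ as $\pi$. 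The blow-up formula shows $\rho_*\colon\CH_0(X')\xrightarrow{\sim}\CH_0(X)$ is an isomorphism (the exceptional contributions live in Chow groups of the $0$-dimensional centers in negative degree), and since the degree map and the Albanese are birational invariants this gives $F^2(X')\cong F^2(X)$; the same holds after any base change because $X'\otimes_k L$ is the corresponding blow-up of $X\otimes_k L$. Thus the hypothesis is inherited by $X'$, and I may assume henceforth that $\pi\colon X\to Y$ is a generically finite morphism of degree $d$.

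The backbone is then an elementary observation applied with $A=F^2(X)$ and $B=F^2(Y)$: if $f\colon A\to B$ and $g\colon B\to A$ are homomorphisms with $f\circ g=d\cdot\id_B$ and $A/A_{\dv}$ is finite, then $B/B_{\dv}$ has finite exponent. Indeed, the image of a divisible group is divisible, so $f(A_{\dv})\subseteq B_{\dv}$ and $f$ descends to $\bar f\colon A/A_{\dv}\to B/B_{\dv}$ with finite image $I$; since $d\cdot B\subseteq f(A)$ we get $d\cdot(B/B_{\dv})\subseteq I$, so $d\cdot(B/B_{\dv})$ is finite of some exponent $m$, whence $B/B_{\dv}$ has exponent dividing $dm$. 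To feed this in I take $f=\pi_*$ and $g=\pi^*$: pushforward preserves degrees and is compatible with the induced morphism $\mathrm{Alb}_X\to\mathrm{Alb}_Y$, so $\pi_*$ carries $F^2(X)$ into $F^2(Y)$; pullback multiplies degrees by $d$ and is compatible with the functorial transfer $\mathrm{Alb}_Y\to\mathrm{Alb}_X$, so $\pi^*$ carries $F^2(Y)$ into $F^2(X)$; and the projection formula gives $\pi_*\circ\pi^*=d\cdot\id$ on $\CH_0(Y)$, hence on $F^2(Y)$. As $F^2(X)/F^2(X)_{\dv}$ is finite by hypothesis, the observation yields finite exponent for $F^2(Y)/F^2(Y)_{\dv}$, as required.

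The main obstacle is the honest construction of $\pi^*$ together with the projection formula $\pi_*\pi^*=d$, since $\pi$ is only generically finite and need not be flat along its ramification locus; this is where I expect the open-subvariety and Suslin-homology machinery advertised in the abstract to be indispensable. Let $V\subseteq Y$ be the dense open over which $\pi$ is finite flat of degree $d$, and $U=\pi^{-1}(V)\subseteq X$; on $U\to V$ the finite-flat pullback $\pi^*\colon H_0^{\Sus}(V)\to H_0^{\Sus}(U)$ exists and satisfies $\pi_*\pi^*=d$ cleanly. The real work is to compare $H_0^{\Sus}(U)$ with $\CH_0(X)$ and $H_0^{\Sus}(V)$ with $\CH_0(Y)$ through the localization sequences attached to the boundary curves $Y\setminus V$ and $X\setminus U$, and to show that the boundary contributions do not destroy the finiteness extracted above. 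This is exactly the point at which the \emph{full} hypothesis — Conjecture \ref{mainconj} for $X\otimes_k L$ for every finite extension $L/k$, not merely for $X$ over $k$ — is forced: the closed points of the boundary curve and of the fibers of $\pi$ carry arbitrary finite residue fields $L/k$, so controlling their classes requires the conjecture for all the base changes $X_L$.

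Finally, for the variant where $\pi$ is defined only over a finite extension $k'/k$, I would run the argument over $k'$ to obtain Conjecture \ref{mainconj} for $Y\otimes_k k'$, then descend to $Y$ by the same elementary observation applied to restriction and corestriction: $\res\colon F^2(Y)\to F^2(Y\otimes_k k')$ and $\Cores$ in the opposite direction satisfy $\Cores\circ\res=[k':k]\cdot\id$, so finite exponent for $F^2(Y\otimes_k k')/(\,\cdot\,)_{\dv}$ propagates to finite exponent for $F^2(Y)/F^2(Y)_{\dv}$, and \cite[Theorem 8.1]{CT1} upgrades this to the conjecture for $Y$.
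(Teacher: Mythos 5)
Your overall skeleton matches the paper's proof of \autoref{thm:main1}: reduce via \cite[Theorem 8.1]{CT1} to showing $F^2(Y)/F^2(Y)_{\dv}$ has finite exponent, run a transfer argument $\pi_\star\circ\pi^\star=d$ against $X$, and descend from a finite extension by restriction--corestriction. The genuine gap is that you never supply the step you yourself call ``the real work'': comparing $F^2$ of the open pieces $U,V$ (Suslin homology) with $F^2$ of the projective surfaces $X,Y$. This comparison is not a formal consequence of localization sequences; it is precisely \autoref{thm:main2}, the paper's main technical theorem, whose proof occupies Sections 2--4. Concretely, localization (\autoref{lem:maindiagram}) produces a boundary map $\varepsilon\colon SK_1(D)\to T(k)$, with $D$ the snc boundary divisor and $T$ the toric part of $Alb_U$, and one must prove: (i) $\cok(\varepsilon)$ is finite, which uses Russell's universal-quotient property of $Alb_{U_{\overline{k}}}$, the structure of $SK_1(D)$ for snc curves (\autoref{prop:SK1}, resting on the Bloch--Saito class field theory of curves), a norm argument, and the continuity lemma \autoref{prop:torus1}; and (ii) after quotienting by the image of $\CH^2(X,1)$ (the decomposable $(2,1)$-cycles coming from $\NS(X)\otimes k^\times$), the kernel of $\varepsilon$ is torsion-of-finite-exponent-by-divisible, which needs the N\'eron--Severi computation \autoref{prop:NSXcomputations} and \autoref{prop:torus2}. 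Nothing in your proposal addresses this. Your explanation of where the hypothesis over \emph{all} finite extensions $L/k$ enters (residue fields of boundary points) is also not where it is actually used: it enters through the preliminary reductions (rational points on the components of $D$, splitness of $T$, trivial Galois action on $\NS(X_{\overline{k}})$, descent of $\pi$ to the base field) and through the norm arguments inside Claim 1 of \autoref{thm:main2}.

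A separate comment on your first route, which you abandon for the wrong reason. After resolving indeterminacy to a proper generically finite morphism $\pi'\colon X'\to Y$ of smooth projective surfaces, the existence of $\pi'^\star$ on $\CH_0$ and the projection formula $\pi'_\star\pi'^\star=d$ are \emph{not} the obstacle: the graph/Gysin (lci) pullback exists for any morphism to a smooth variety, and the projection formula holds for proper morphisms of smooth varieties; flatness is irrelevant. What genuinely requires proof is that $\pi'^\star$ preserves the Albanese kernel, i.e.\ that $F^2$ is stable under the correspondence ${}^t\Gamma_{\pi'}$; this would follow from the universal regular-quotient property of the Albanese over $\overline{k}$ together with injectivity of $Alb_{X'}(k)\to Alb_{X'}(\overline{k})$, but you merely assert the compatibility and then declare the construction itself to be ``the main obstacle.'' Had you committed to and justified that single point, your first route would yield a proof bypassing the Suslin-homology machinery altogether --- a genuinely shorter argument than the paper's, and one the authors implicitly discount in \autoref{rem:resolutionsings} (their stated obstruction, the non-existence of a \emph{finite} morphism between smooth models, does not apply to the lci pullback). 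As written, however, neither of your two routes is carried to completion, so the proposal does not constitute a proof.
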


 Using the aforementioned result (1) of Raskind and Spiess, we can verify the conjecture for surfaces that are geometrically dominated by products of curves. 

 \begin{cor}\label{cor:newevidenceintro} (see \autoref{cor:newevidence})
     Let $X$ be a smooth projective surface over a $p$-adic field $k$ such that there is a finite rational map $\pi: C_1\times C_2\dashrightarrow X$ from a product of smooth projective curves whose Jacobians have a mixture of potentially good ordinary and multiplicative reduction. Then \autoref{mainconj} is true for $X$. 
 \end{cor}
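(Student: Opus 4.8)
The plan is to deduce the statement from \autoref{thm:main1intro} by taking the source surface to be the product $C_1\times C_2$ and the target to be $X$. Since the given rational map $\pi\colon C_1\times C_2\dashrightarrow X$ is finite, it is in particular generically finite, and it is defined either over $k$ or over a finite extension; in either case \autoref{thm:main1intro} applies (in the latter case via its last sentence). Thus it suffices to verify the hypothesis of that theorem, namely that \autoref{mainconj} holds for $(C_1\times C_2)\otimes_k L$ for \emph{every} finite extension $L/k$. Note that $(C_1\times C_2)\otimes_k L\cong (C_1\otimes_k L)\times_L(C_2\otimes_k L)$ is again a product of two smooth projective curves over $L$, and that the property of having potentially good ordinary (resp.\ potentially multiplicative) reduction for the Jacobians is inherited after base change to $L$.

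Fix a finite extension $L/k$ and write $Z_L=(C_1\times C_2)\otimes_k L$. The first step is to reduce to the situation in which the reduction is actually, and not merely potentially, of the required type. Since the Jacobians of $C_1\otimes_k L$ and $C_2\otimes_k L$ have potentially good ordinary and potentially multiplicative reduction, I can choose a finite extension $M/L$ over which these Jacobians acquire a mixture of good ordinary and split multiplicative reduction; enlarging $M$ if necessary, I may also assume $(C_1\times C_2)(M)\neq\emptyset$. Both properties persist under further extension, so this enlargement is harmless. Now $Z_L\otimes_L M\cong (C_1\times C_2)\otimes_k M$ is a product of curves over $M$ satisfying exactly the hypotheses of item (1) above, so by the theorem of Raskind and Spiess \cite[Theorem 1.1]{RS} the conjecture holds for $Z_L\otimes_L M$; that is, $F^2(Z_L\otimes_L M)/F^2(Z_L\otimes_L M)_{\dv}$ is finite.

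The crux is then a descent from $M$ back to $L$. Writing $d=[M:L]$, I would use the base-change (restriction) map $\res\colon \CH_0(Z_L)\to\CH_0(Z_L\otimes_L M)$ together with the norm (corestriction) map $\Nm\colon\CH_0(Z_L\otimes_L M)\to\CH_0(Z_L)$, which satisfy $\Nm\circ\res=d$ and are compatible with the degree and Albanese maps, and hence preserve the subgroups $F^1$ and $F^2$. Because both maps send divisible subgroups into divisible subgroups, they descend to the quotients by the maximal divisible subgroups, and the induced maps $\ov{\res},\ov{\Nm}$ still satisfy $\ov{\Nm}\circ\ov{\res}=d$. Setting $A=F^2(Z_L)/F^2(Z_L)_{\dv}$, the map $\ov{\res}$ carries $A$ into the finite group $F^2(Z_L\otimes_L M)/F^2(Z_L\otimes_L M)_{\dv}$, so its image is finite while its kernel is killed by $d$; hence $A$ is a torsion group of finite exponent. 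Finally, since $Z_L$ is a surface, Colliot-Th\'el\`ene's theorem on the $n$-torsion of $\CH_0$ \cite[Theorem 8.1]{CT1} upgrades ``finite exponent'' to ``finite'', giving \autoref{mainconj} for $Z_L$. As $L$ was arbitrary, the hypothesis of \autoref{thm:main1intro} is verified and the corollary follows.

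I expect the main obstacle to be this descent step: specifically, checking that the transfer maps $\res$ and $\Nm$ are genuinely compatible with the two-step filtration, so that the argument takes place entirely inside $F^2$, and that no information is lost at the prime $p$. The Saito--Sato structure theorem only controls the prime-to-$p$ part of $F^1$, so it is essential that the finite-exponent conclusion for $A$, and its promotion to finiteness via \cite[Theorem 8.1]{CT1}, hold integrally rather than merely away from $p$; the transfer relation $\ov{\Nm}\circ\ov{\res}=d$ is what makes this possible, as it bounds the full exponent of $A$ by $d$ times that of a finite group.
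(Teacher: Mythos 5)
Your proposal is correct and follows essentially the same route as the paper: the paper's proof of \autoref{cor:newevidence} simply combines \autoref{thm:main1} with the Raskind--Spiess result (cited there as \cite[Corollary 4.5.7]{RS}), exactly as you do. The only difference is that you spell out the passage from ``potentially'' good ordinary/multiplicative reduction to the actual split case via the restriction--norm argument plus \cite[Theorem 8.1]{CT1}, which is the same standard descent the paper invokes (via \cite[Lemma 3.3]{Gaz24} and \autoref{finiteness-n-torsion-sus}) inside the proofs of \autoref{thm:main1} and \autoref{thm:main2}.
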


There is a plethora of surfaces that are dominated by products of curves. Among those we highlight the following important classes. 
\begin{enumerate}
    \item[(a)]  Isotrivial fibrations $\pi:X\to C$. That is, fibrations for which all smooth fibers are isomorphic (see section \ref{sec:isotrivial}). 
    \item[(b)] Symmetric square $\Sym^2(C)$ of a genus $2$ curve and abelian surfaces (see section \ref{sec:Jac}).  
    \item[(c)] Fermat type diagonal surfaces $X_m=
    \{a_0x_0^m+a_1x_1^m+a_2x_2^m+a_3x_3^m\}\subset \mathbb{P}_k^3$, where $m\geq 4$ and $a_i\in k^{\times}$ for $i=0,\ldots, 3$ (see section \ref{sec:Fermat}).  
\end{enumerate}

Example (a) includes a very wide class of surfaces including isotrivial elliptic surfaces and many surfaces of general type. This also verifies the conjecture for many new classes of $K3$ surfaces. Namely, it is known that most $K3$ surfaces that admit a non-symplectic automorphism are isotrivial (\cite[Theorem 1.1]{GP}). Even though this class is special, it goes much beyond the geometrically Kummer surfaces considered in \cite{GL24}, and it even includes non elliptic $K3$'s (see \cite[Sections 7.2-7.4]{GP}). 
Example (b) together with the previous results on products of elliptic curves yield that \autoref{mainconj} is true for every abelian surface with a mixture of potentially good ordinary and multiplicative reduction (\autoref{cor:absurface}), which improves a weaker result obtained by the first author in \cite[Theorem 1.2]{Gaz19}. The case $m=4$ in Example (c) was already considered in \cite{GL24}. When $m\geq 5$, this is a surface of general type. Using known results about the Jacobian of the Fermat curve over finite fields (\cite{Yui}), we deduce that if $k$ is a finite extension of $\Q_p$ with $p\equiv 1\mod m$, then the surface $X_m$ satisfies \autoref{mainconj}.

More generally, if $G$ is a finite group acting faithfully on a product $C_1\times C_2$, then we obtain evidence for a surface $X$ which is a resolution of singularities of the quotient $(C_1\times C_2)/G$. Such surfaces are often called in literature \textit{product-quotient surfaces}, and over the complex numbers they have played a key role in the classification of surfaces with small birational invariants (see \cite{CP, MP, P}). 

In the special case of a Kummer surface considered in \cite{GL24} the proof of \autoref{mainconj} relied on a simple push-forward argument that reduced the question to the corresponding abelian surface. What made this work was that the singular surface $(E_1\times E_2)/\langle -1\rangle$ admits a very simple resolution of singularities. Unfortunately,  for most quotients $(C_1\times C_2)/G$, the resolution is more complicated and this argument is guaranteed not to work (see \autoref{rem:resolutionsings}).   The key new idea we introduce in this article is to work with open subvarieties that avoid singularities.

 \subsection*{Analogs for Quasi-projective Varieties}

Let $U$ be an open subvariety of a projective variety $X$ over a $p$-adic field $k$. An analog of $\CH_0(X)$ for $U$ is Suslin's singular homology group $H_0^{\Sus}(U)$ (see \autoref{sec:Suslin}). This group coincides with $\CH_0$ when $U$ is proper, but it is generally larger otherwise. Similarly to the proper case, there is a degree map $H_0^{\Sus}(U)\xrightarrow{\deg}\Z$. We will denote by $F^1(U)$ its kernel. Moreover, there is a \textit{generalized Albanese map}, \[F^1(U)\xrightarrow{\alb_U} Alb_U(k),\] where $Alb_U$ is a semi-abelian variety, which when $U(k)\neq\emptyset$ is universal for maps from $U$ to semi-abelian varieties. The variety $Alb_U$ was constructed by Serre in his \'{e}xpos\'{e} \cite{Se},  and it is an extension of $Alb_X$ by a torus.  We denote  $F^2(U):=\ker(\alb_U)$.  In this article we propose the following Conjecture, which is the analog of \autoref{mainconj} to the quasi-projective setting.

\begin{conj}\label{ourconj} (see \autoref{mainconj-quasiprojective}) Let $U$ be a smooth quasi-projective surface over a finite extension $k$ of the $p$-adic field $\Q_p$. 
Then the kernel $F^2(U)$ of the generalized Albanese map $F^1(U)\xrightarrow{\alb_U} Alb_U(k)$ admits a decomposition $F^2(U)\cong F\oplus D$, where $F$ is a finite group and $D$ is a divisible group.
\end{conj}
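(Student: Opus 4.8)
The statement is a conjecture, so what I can offer is a strategy to \emph{establish} it for the classes of surfaces treated elsewhere in the paper — those $U$ open in a smooth projective surface $X$ that is dominated by a product of curves — running in parallel with the proof of \autoref{thm:main1intro}. Write $U = X\setminus D$ with $X$ smooth projective and $D$ a reduced divisor, and recall (following Serre \cite{Se}) that the generalized Albanese sits in an exact sequence of semi-abelian varieties $0\to T\to Alb_U\to Alb_X\to 0$, where $T$ is a torus determined by the boundary $D$. The first move is to reduce the ``divisible $\oplus$ finite'' assertion to a bounded-exponent statement: exactly as in the projective case (Colliot-Th\'el\`ene's control of torsion, \cite[Theorem 8.1]{CT1}), it should suffice to prove that $F^2(U)/F^2(U)_{\dv}$ is torsion of \emph{finite exponent}, once one checks that the relevant finiteness of torsion in $\CH_0$ extends to $H_0^{\Sus}(U)$. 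This reduction is essential because the maximal divisible subgroup is not exact, so every subsequent comparison must be made modulo bounded torsion.

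Second, I would set up the comparison between the open surface and its compactification. Since Suslin homology is covariant, the open immersion $U\hookrightarrow X$ induces a map $H_0^{\Sus}(U)\to H_0^{\Sus}(X)=\CH_0(X)$, which is surjective by a moving lemma on the smooth surface $X$ (every $0$-cycle can be moved off $D$), and which is compatible with the degree and with the two Albanese maps. This produces a commutative square relating $\alb_U$ to $\alb_X$ whose kernel and cokernel are governed by $T(k)$ and by the $0$-cycles supported on $D$. The key structural input is that over a $p$-adic field $k$ a torus $T$ has $T(k)$ topologically finitely generated with trivial maximal divisible subgroup; hence, once one works at the level of bounded-exponent quotients as above, the torus contribution cannot obstruct the divisible-plus-finite structure, \emph{provided} the corresponding property is known for $F^2(X)$. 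Thus the problem splits into (i) the projective part $F^2(X)$, supplied by \autoref{mainconj} for $X$ — available from Raskind--Spiess \cite{RS} when $X$ is a product of curves, and then propagated through \autoref{thm:main1intro} — and (ii) the new boundary and torus terms.

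Third, to handle (ii) in the product-of-curves model I would pass to the Somekawa/Milnor $K$-group description underlying the work of Raskind and Spiess. The graded pieces of $F^2$ for an open subvariety of $C_1\times C_2$ fall into three types: the term $K(k;J_1,J_2)$, which Raskind--Spiess show is divisible (a fortiori divisible-plus-finite) under the stated reduction hypotheses; mixed terms $K(k;J_i,\G_m)$ attached to the toric directions; and $K^M_2(k)=K(k;\G_m,\G_m)$, for which Moore's theorem gives precisely a decomposition into the finite cyclic group $\mu(k)$ and a uniquely divisible group. Assembling these through the Mackey-functor filtration, and matching them against the localization comparison of the previous paragraph, should yield the divisible-plus-finite conclusion for $F^2(U)$, which then transfers to any $U$ dominated by such an open product via the open analog of \autoref{thm:main1intro} and \autoref{cor:newevidenceintro}.

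The main obstacle, as always over $p$-adic fields, is the $p$-primary part. The Saito--Sato structure theorem and its Suslin-homology analog control only the prime-to-$p$ divisibility of $F^1$, so the delicate step is to bound the $p$-torsion of $F^2(U)/F^2(U)_{\dv}$; concretely this means controlling the mixed $K$-groups $K(k;J_i,\G_m)$ and the $p$-torsion coming from $D$, and verifying that the comparison square degenerates sufficiently, $p$-integrally, for the bounded-exponent reduction of the first paragraph to apply. A secondary but genuine difficulty is formal: because divisibility is not preserved under extensions, each of these steps must be carried out for bounded-exponent quotients rather than for maximal divisible subgroups directly.
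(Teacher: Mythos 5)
Your opening reduction (finiteness of $n$-torsion of $H_0^{\Sus}(U)$, then work with bounded-exponent quotients) is sound and matches the paper's own first step (\autoref{finiteness-n-torsion-sus} together with \cite[Theorem 8.1]{CT1}), and your third paragraph is a legitimate route for the very special case of a product of two \emph{open} curves: there the Kahn--Yamazaki isomorphism \eqref{opencurves}, Somekawa's $V(C)\cong K(k;J_C,\G_m)$, Bloch--Saito class field theory, Moore's theorem for $K_2^M(k)$, and an extension lemma like \autoref{lem:F+D} would indeed assemble to a divisible-plus-finite structure. The genuine gap is in your second paragraph, and it is fatal to the rest. The kernel of the comparison $F^2(U)\to F^2(X)$ is \emph{not} governed by $T(k)$: by the localization sequence (\autoref{lem:maindiagram}) it is a subquotient of $SK_1(D)$, $D=X-U$, which contains the summands $V(D_i)$ of the boundary components --- groups with uncountable divisible part and, in general, infinite torsion. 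Examples \ref{P2 example} and \ref{ruled-surface example} of the paper show that $F^2(U)$ can acquire infinite torsion or an infinite-dimensional $\Q$-vector space even when $F^2(X)=0$, so no soft statement about $T(k)$ being topologically finitely generated with trivial divisible subgroup can control this kernel. What is actually required --- and what constitutes the technical heart of the paper's \autoref{thm:main2} --- is: (i) the structure of $SK_1(D)$ for a simple normal crossing curve $D$ (\autoref{prop:SK1}, extending \autoref{cor:SK1structure} via the conductor square); (ii) finiteness of $\cok(\varepsilon\colon SK_1(D)\to T(k))$, proved by geometric surjectivity over $\overline{k}$ (Russell's universal property), a norm argument, and the $p$-adic torus lemma \autoref{prop:torus1}; and (iii) the bounded-torsion-by-divisible structure of $\ker(\overline{\varepsilon})$, proved by annihilating the excess toric directions with decomposable $(2,1)$-cycles through the N\'eron--Severi rank computation \autoref{prop:NSXcomputations}, and then invoking \autoref{prop:torus2}. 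None of this is present, or replaceable by what you wrote.

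This gap makes your transfer step circular. When one resolves a generically finite rational map $C_1\times C_2\dashrightarrow Y$, the open set $W\subset C_1\times C_2$ on which it becomes finite has complement containing curves dominating both factors; such a $W$ is not a product of open curves and contains no product of open curves, so the Somekawa/Kahn--Yamazaki description never applies to it. To know that $F^2(W_L)$ is divisible-plus-finite for all finite $L/k$ --- the input your ``open analog of \autoref{thm:main1intro}'' pushforward argument needs --- one must pass from the known projective case $F^2(C_1\times C_2)$ to the arbitrary open $W$, i.e.\ precisely the projective-to-open direction of \autoref{thm:main2} that your second paragraph fails to establish. (By contrast, your closing concern about the $p$-primary part is not the real obstruction: Saito's class field theory controls $V(C)$, hence $SK_1(D)$, integrally including at $p$, and the paper's argument nowhere needs the Saito--Sato theorem.) Finally, note that the statement under review is a conjecture; the paper's own supporting result for it is the equivalence \autoref{maincor} with the projective conjecture, and that equivalence is exactly the general open/projective comparison your proposal lacks.
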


Our main evidence for this conjecture is the following theorem.

\begin{theo}\label{thm:main2intro} (see \autoref{thm:main2}, \autoref{maincor}) \autoref{mainconj} is true for smooth projective surfaces if and only if \autoref{ourconj} is true.
\end{theo}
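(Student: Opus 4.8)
The plan is to prove the equivalence of \autoref{mainconj} and \autoref{ourconj} by moving between a smooth projective surface $X$ and its open subvarieties $U$, exploiting the localization sequence that relates $H_0^{\Sus}(U)$ to $\CH_0(X)$. The reverse direction (\autoref{ourconj} $\Rightarrow$ \autoref{mainconj}) should be the easy one: a smooth projective surface $X$ is itself a smooth quasi-projective surface, so $H_0^{\Sus}(X)=\CH_0(X)$, $Alb_X$ is already a (semi-)abelian variety, and $F^2(X)$ in the projective sense coincides with $F^2(U)$ for $U=X$. Thus \autoref{ourconj} applied to $U=X$ yields \autoref{mainconj} immediately.

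The forward direction (\autoref{mainconj} $\Rightarrow$ \autoref{ourconj}) is the substantive one. Given a smooth quasi-projective surface $U$, I would first choose a smooth projective compactification $X$ with boundary $D=X\setminus U$ a reduced normal crossings divisor; after a finite base change one may arrange $D$ to be a union of smooth curves meeting transversally. The strategy is to compare $F^2(U)$ with $F^2(X)$ through the localization/boundary sequence linking $H_0^{\Sus}(U)$, $\CH_0(X)$, and the $0$-cycles (really the Picard/divisor class data) supported on the boundary components $D_i$, together with the compatible comparison between the generalized Albanese $Alb_U$ (the semi-abelian extension of $Alb_X$ by the torus determined by $D$) and $Alb_X$. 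The key point is that the torus part and the boundary contributions are governed by the arithmetic of curves and tori over $k$, whose $k$-points and relevant subgroups are, by Mattuck's theorem and the structure of $k^\times$, extensions of finite groups by divisible groups. So one reduces the divisibility-modulo-finite statement for $F^2(U)$ to the same statement for $F^2(X)$ (which holds by \autoref{mainconj}) plus a controlled error term coming from the boundary, which I expect to be itself an extension of a finite group by a divisible group.

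Concretely, the steps I would carry out are: (1) set up the exact sequence
\begin{equation}
\bigoplus_i \CH_0(D_i)_{\text{rel}} \to H_0^{\Sus}(U) \to \CH_0(X) \to 0
\end{equation}
(or its $F^1$/$F^2$ refinement) relating singular homology of the open surface to the Chow group of the compactification and the boundary data; (2) show the generalized Albanese map $\alb_U$ is compatible with this sequence, fitting into a morphism of short exact sequences whose outer terms involve the torus $T$ with $Alb_U$ an extension $1\to T\to Alb_U\to Alb_X\to 0$; (3) apply the snake lemma to extract a sequence relating $F^2(U)$, $F^2(X)$, and a boundary kernel/cokernel term $B$; (4) prove that $B$, being built from $0$-cycles on curves (which satisfy the conjecture trivially, the Albanese being an isomorphism) and from the divisible-mod-finite group $T(k)=(k^\times)^{\,r}$, is an extension of a finite group by a divisible group; (5) conclude that $F^2(U)$ is divisible-mod-finite by the two-out-of-three behavior of this property in extensions. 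For step (5) one must be careful: the class of groups that are ``divisible $\oplus$ finite'' is not closed under arbitrary extensions, so I would instead work with the equivalent reformulation noted in the excerpt, namely that $F^2(-)/F^2(-)_{\dv}$ is finite, and invoke the result of Colliot-Th\'el\`ene on $n$-torsion (\cite[Theorem 8.1]{CT1}) to reduce to finiteness of exponent; finiteness of exponent is stable under the relevant extensions, and the torsion result then upgrades this back to the direct-sum decomposition.

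The main obstacle I anticipate is precisely step (2)--(3): establishing the compatibility of Suslin's singular homology localization sequence with the \emph{generalized} Albanese map of Serre, and identifying the boundary term $B$ cleanly enough that its structure as divisible-by-finite is transparent. The subtlety is that passing from $\CH_0$ to $H_0^{\Sus}$ introduces the torus factor on the Albanese side, and one must verify that the boundary contribution on the cycle side matches the torus contribution on the Albanese side so that the comparison of kernels $F^2(U)$ and $F^2(X)$ is genuinely controlled rather than obscured by an uncontrolled correction. Making this diagram commute, with all degree and Albanese maps respected, and checking exactness after the snake lemma, is where the real work lies; the arithmetic inputs (Mattuck, Saito--Sato, Colliot-Th\'el\`ene's torsion theorem, and the triviality of the conjecture for curves) are then applied to the already-identified pieces.
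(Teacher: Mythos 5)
Your skeleton (the trivial direction via $U=X$; the substantive direction via a smooth snc compactification, a localization sequence, compatibility with the generalized Albanese, the snake lemma, and reduction to finiteness-of-exponent via $n$-torsion finiteness) is the same as the paper's, but there is a genuine gap in steps (1) and (4), and it is exactly where the paper's real work lies. First, the boundary term of the localization sequence is not zero-cycle data on the components $D_i$: it is $SK_1(D)=H^3_M(D,\Z(2))\cong \CH^2(D,1)$, a $K_1$-type group which for each smooth component contributes $V(D_i)\oplus k^\times$ (and whose structure for the singular snc curve $D$ already requires a conductor-square argument, \autoref{prop:SK1}). More importantly, your sequence truncates away the term mapping \emph{into} the boundary group: the correct sequence is
\begin{equation*}
\CH^2(X,1)\xrightarrow{g} SK_1(D)\xrightarrow{f} H_0^{\Sus}(U)\to \CH_0(X)\to 0,
\end{equation*}
as in \autoref{lem:cohomology Ex-seq} and \autoref{lem:maindiagram}, and the term $\CH^2(X,1)$ you dropped is indispensable.

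Here is why its omission is fatal to your step (4). The non-divisible part of $SK_1(D)$ has toric piece $(k^\times)^r$, with $r$ the number of components of $D$, while $T(k)\cong (k^\times)^s$ with $s=r-n$, where $n$ is the rank of the classes $[D_1],\ldots,[D_r]$ in $\NS(X)$. Hence $\ker\bigl(\varepsilon\colon SK_1(D)\to T(k)\bigr)$ contains, up to finite index, an $n$-dimensional torus $(k^\times)^n$; since $k$ is $p$-adic, $k^\times$ has trivial divisible subgroup, so this kernel is emphatically \emph{not} torsion-of-finite-exponent by divisible, and your ``controlled error term $B$'' is uncontrolled. What rescues the argument in the paper is precisely the discarded term: the decomposable $(2,1)$-cycles $\NS(X)\otimes k^\times\to\CH^2(X,1)\xrightarrow{g} SK_1(D)$ annihilate the excess $n$-dimensional toric piece, and this rests on the N\'{e}ron--Severi computation (\autoref{prop:NSXcomputations}) that $\iota^\star(\NS(X))$ has rank exactly $n$, so that after quotienting by $\img(g)$ the induced map lands us in a map of $s$-dimensional tori with finite cokernel, whose kernel is then finite by the topological results on tori (\autoref{prop:torus1}, \autoref{prop:torus2}). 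Two further points you treat as automatic are not: finiteness of $\cok(\varepsilon)$ requires surjectivity of $\varepsilon_{\overline{k}}$ (via Russell's universal quotient property of $Alb_{U_{\overline{k}}}$) plus a norm argument and \autoref{prop:torus1}; and the upgrade from finite exponent back to an actual finite group on the quasi-projective side needs the Suslin-homology analogue of Colliot-Th\'{e}l\`{e}ne's torsion theorem (\autoref{finiteness-n-torsion-sus}), which is proved via compactly supported \'{e}tale cohomology --- the projective statement you cite does not apply to $H_0^{\Sus}(U)$.
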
 
This theorem is the main technical result of the paper. \autoref{thm:main1intro} follows easily from this by a simple push-forward argument. 
What is more, \autoref{thm:main2intro} and its proof give essential information on the structure of Suslin's homology,  not only for surfaces that are dominated by products of curves, but also for those with geometric genus $p_g=0$.  When $X$ is smooth and the complement $X-U$ is $0$-dimensional, we show that the groups $F^2(X)$ and $F^2(U)$ coincide. On the other hand, when $X-U$ is $1$-dimensional, the group $F^2(U)$ can in general be much larger than $F^2(X)$. For example, when the divisor $D=X-U$ has more irreducible components than the order of the N\'{e}ron-Severi group $\NS(X)$, then the group $F^2(U)_{\dv}$ is in general nontrivial, even when $F^2(X)$ is finite. The Milnor $K$-group $K_2^M(k)\cong F^2(\G_m\times\G_m)$ is a classical example of this phenomenon. The same example also shows that the finite quotient $F^2(U)/F^2(U)_{\dv}$ can be much larger than $F^2(X)/F^2(X)_{\dv}$. 

An interesting case to consider is when the divisor $D$ has a single smooth irreducible component, in which case we have an equality $Alb_U=Alb_X$. In \autoref{sec:Suscomputs} we show that even  in this case the structure of $F^2(U)$ can vary very wildly.  The following proposition gives a situation where $F^2(U)=F^2(X)$.

\begin{prop} (\autoref{prop:secsoffibration}) Let  $\pi:X\to C$ be a smooth fibration, where $C$ is a smooth projective curve over a $p$-adic field $k$ with $C(k)\neq\emptyset$. Let $s:C\hookrightarrow X$ be a section of the fibration and  $U=X-s(C)$.  Then $F^2(U)=F^2(X)$. 
\end{prop}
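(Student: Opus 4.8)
The plan is to prove the stronger statement that the pushforward $j_*\colon H_0^{\Sus}(U)\to \CH_0(X)$ attached to the open immersion $j\colon U\hookrightarrow X$ is an isomorphism, and then to transport the equality across it. Write $D:=s(C)$ and let $X_c$ denote a fibre of $\pi$. Because $s$ is a section, $D$ meets each fibre in a single point, so $[D]\cdot [X_c]=1$; in particular the map $\Z\,[D]\to \NS(X)$ is injective---indeed split by $\alpha\mapsto \alpha\cdot [X_c]$---so the character group $\ker(\Z\,[D]\to \NS(X))$ of the torus in Serre's extension vanishes. Since moreover $D$ is a single smooth irreducible curve, the torus is trivial and $Alb_U=Alb_X=:A$. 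The generalized Albanese maps are then compatible, $\alb_X\circ j_*=\alb_U$, and both are surjective onto $A(k)$ (the fibres $X_c$ over points of $C(k)$ are smooth curves with a rational point $s(c)$, hence have infinitely many $k$-points, so $U(k)\neq\emptyset$). Granting that $j_*$ is an isomorphism, it respects the degree and the identified Albanese maps, so $F^2(U)=\ker(\alb_U)=j_*^{-1}(\ker \alb_X)=j_*^{-1}(F^2(X))\cong F^2(X)$; equivalently one applies the snake lemma to the two short exact sequences $0\to F^2\to F^1\to A(k)\to 0$.

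To analyze $j_*$ I would use the localization sequence in Suslin homology for the smooth pair $(X,D)$,
\[
H_1^{\Sus}(X)\xrightarrow{\ \rho\ }\mathcal{B}(D)\xrightarrow{\ \partial\ }H_0^{\Sus}(U)\xrightarrow{\ j_*\ }\CH_0(X)\to 0,
\]
whose last map is the surjection expressing that any $0$-cycle can be moved off the one-dimensional $D$, and whose boundary group $\mathcal{B}(D)$ is an extension of the abelian part $\Pic^0(D)(k)=J_C(k)$ by the torus part $\G_m(k)=k^*$. Exactness gives $\ker(j_*)=\img(\partial)=\coker(\rho)$, so the whole proposition reduces to the surjectivity of $\rho$ onto both graded pieces of $\mathcal{B}(D)$.

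This is exactly where the section is used. On the torus piece, $\rho$ is, after the standard identifications, the map $\NS(X)\otimes k^*\to k^*$ induced by intersection against $[D]$ (it comes from the $\NS(X)\otimes\Z(1)[2]$-summand of $M(X)$); since $[X_c]\cdot [D]=1$, the element $[X_c]\otimes u$ is sent to $u$, so this piece is surjective. On the abelian piece, $\rho$ is the restriction of line bundles $i_D^*\colon \Pic^0(X)(k)\to \Pic^0(D)(k)$; under the isomorphism $D\cong C$ furnished by $s$ this is $s^*$, and $s^*\circ \pi^*=(\pi\circ s)^*=\id$ shows that $s^*$ is a split surjection. Hence $\rho$ is surjective, $\ker(j_*)=0$, and together with the surjectivity of $j_*$ this makes $j_*$ an isomorphism, yielding $F^2(U)=F^2(X)$.

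I expect the main obstacle to be the precise justification of the displayed localization sequence and of the description of $\rho$ over the $p$-adic base: one must know (or cite from the comparison results developed earlier in this section) that $H_0^{\Sus}$ of the open surface $U=X\setminus D$ sits in such a sequence with boundary term an extension of $J_C(k)$ by $k^*$, and that $\rho$ is given on its two pieces by intersection with $[D]$ and by the Picard restriction $i_D^*$. Conceptually all of this is the fibred form of the elementary fact that deleting a single rational point from a smooth projective curve $B$ does not change $H_0^{\Sus}$, the generalized Jacobian of $B$ with reduced modulus one point being $J_B$ (the torus $\G_m^{\,1-1}$ is trivial); removing the section $s(C)$ deletes precisely one point from every fibre, so no new torus class is created and every new abelian class is already visible on $X$ through the section. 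The delicate point is that the relevant splittings---both the intersection $[X_c]\cdot[D]=1$ and the identity $s^*\pi^*=\id$---hold on the nose on $k$-points, rather than merely up to isogeny or torsion, which is what upgrades the comparison from ``equal up to finite and divisible groups'' to the exact equality $F^2(U)=F^2(X)$.
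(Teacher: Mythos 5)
Your proposal follows the same skeleton as the paper's proof: pass to the localization sequence for the pair $(X,s(C))$, note that $Alb_U=Alb_X$ because $s(C)$ is a single smooth irreducible curve, and use the section to split both the N\'{e}ron--Severi piece (via $[X_{P_0}]\cdot[s(C)]=1$) and the $\Pic^0$ piece (via $s^\star\pi^\star=\id$). However, the step you yourself flag as the ``main obstacle'' is where the argument genuinely breaks: the boundary group of the localization sequence is \emph{not} an extension of $J_C(k)$ by $k^\times$. By \autoref{lem:cohomology Ex-seq} and \autoref{thm:compactly-support-suslin} the sequence reads
\[
\CH^2(X,1)\xrightarrow{\;g\;} SK_1(s(C))\xrightarrow{\;f\;} H_0^{\Sus}(U)\xrightarrow{\;j_\star\;}\CH_0(X)\to 0,
\]
so the boundary term is $SK_1(C)\cong V(C)\oplus k^\times$, where $V(C)=\ker\bigl(N\colon SK_1(C)\to k^\times\bigr)$. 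The group $V(C)$ is not $J_C(k)$ and is in general vastly larger: it has an enormous divisible part, and for $g(C)\geq 1$ the space $V(C)\otimes_{\Z}\Q$ is an infinite-dimensional $\Q$-vector space (\cite[Page~210]{Sz}). Indeed, in Example~\ref{P2 example} ($X=\mathbb{P}^2_k$, $U=X-C$) the kernel of $j_\star$ is, up to finite groups, exactly $V(C)$; a quotient of your proposed $\mathcal{B}(D)$, i.e.\ of an extension of $J_C(k)$ by $k^\times$, could never be that large, since such a group is finite-dimensional after tensoring with $\Q$. So no exact sequence of the shape you wrote exists, and your picture of the surface case as ``the fibred form'' of deleting a rational point from a curve is precisely what fails: for surfaces the boundary is $K$-theoretic, not semi-abelian.

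Because of this misidentification, your verification of surjectivity of $\rho$ on the ``abelian piece'' proves too little: surjectivity of $i_D^\star\colon\Pic^0(X)(k)\to\Pic^0(D)(k)$ does not imply $\img(g)\supseteq V(C)$. Two further ingredients are needed, and they are exactly what the paper's proof supplies: (i) Somekawa's theorem $V(C)\simeq K(k;J_C,\G_m)$ (\cite[Theorem 2.1]{Som}), which says that $V(C)$ is generated by \emph{decomposable} symbols coming from $J_C(L)\otimes L^\times$ for \emph{all} finite extensions $L/k$; and (ii) the surjectivity of $s^\star\colon\Pic^0(X_L)\to J_C(L)$ for every such $L$, combined with the compatibility of the decomposable-cycle maps $\Pic^0(X_L)\otimes L^\times\to\CH^2(X_L,1)$ with the norms $N_{L/k}$. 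Your splitting $s^\star\pi^\star=\id$ is the right tool for (ii) --- it base-changes to every $L$, exactly as in the paper --- but without (i) the section only accounts for the $J_C(k)$-shadow of the boundary group and says nothing about the rest of $V(C)$. (A minor side remark: your assertion that $\alb_X$ is surjective onto $Alb_X(k)$ is not known in general over $p$-adic fields --- Saito--Sujatha give only finiteness of the cokernel --- but nothing in your reduction actually depends on it.)
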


 Using this we give nontrivial examples where $F^2(U)$ is finite (see Example \ref{ex:bielliptic}). 
At the other extreme, using the results of Colliot-Thélène (see \cite[Appendice B]{Sz}), we present various examples of curves \( C \subset X = \mathbb{P}^2 \) for which \( F^2(X - C)_{\mathrm{div}} \) is ``too" large (see Example~\ref{P2 example}). Additionally, based on results of M. Asakura and S. Saito (\cite{AS}), we provide an example of a ruled surface \( X \), with an open subscheme \( U \) defined as the complement of a divisor, for which the torsion subgroup \( F^2(U)_{\mathrm{tor}} \) is infinite even though \( F^2(X) = 0 \) (see Example~\ref{ruled-surface example}).

\subsection*{Outline}
 The proof of
\autoref{thm:main2} will occupy sections \S~2 - \S~4 
of this article. The method  involves three main steps. First, we use the interpretation of the groups $\CH_0(X)$ and $H_0^{\Sus}(U)$ as motivic cohomology groups (see \autoref{sec:motiviccoh}). The second key ingredient is the class field theory for curves over $p$-adic fields, as developed by S. Bloch (\cite{Bl1}) and S. Saito (\cite{Sai}); in particular the structure of the group $SK_1(C)$ for $C$ a smooth projective curve over a $p$-adic field. In \autoref{SK1(D)} we extend these results to $SK_1(D)$, where $D$ is a simple normal crossing divisor on a smooth projective surface $X$. Lastly, the most technical part of the proof is to analyze the toric piece of the generalized Albanese variety $Alb_U$ and compare it to the toric piece of $SK_1(D)$. The latter is generally of larger dimension, which we essentially annihilate by using some decomposable $(2,1)$-cycles on $X$. 

\autoref{thm:main1intro} follows easily from \autoref{thm:main2} using an argument similar to the one used in \cite[Theorem 1.2]{GL24}. This proof will be given in \autoref{sec:CTconj}. The rest of \autoref{sec:5} will be devoted to giving new examples of projective surfaces that satisfy \autoref{mainconj}, and to some computations of Suslin's homology.

\subsection*{Notation} In this article by a quasi-projective variety $U$ we will mean that $U$ admits an open embedding $j:U\hookrightarrow X$ to a projective variety $X$. For a variety $X$ over a field $k$ and a field extension $L/k$ we will denote by $X_L$ the base change to $L$. The function field of $X$ will be denoted $k(X)$. Moreover, for an integer $r\geq 0$ we will denote by $X_{(r)}$ the set of all schematic points on $X$ of dimension $r$. For $x\in X_{(r)}$ we denote $k(x)$ the residue field of $x$. For an abelian group $A$ we will denote by $A_{\dv}$ its maximal divisible subgroup  and by $A_{\tor}$ its torsion subgroup.  For an integer $n\geq 1$ we denote by $A_n$ and $A/n$ the kernel and cokernel of the multiplication by $n$ map $A\xrightarrow{n}A$ respectively. 

\subsection*{Acknowledgement} The first author's research was partially supported by the  NSF grant DMS-2302196.  The second author would like to thank the University
of Virginia for hosting his visit, during which substantial work was done. We are also grateful to Professors Jean-Louis Colliot-Th\'{e}l\`{e}ne, Jonathan Love, Shuji Saito and Takao Yamazaki  for many helpful discussions. 
	
\vspace{3pt}
	
\section{Background}\label{sec:background}
 In this section we recall some necessary background on Suslin's homology, the generalized Albanese map, motivic cohomology of schemes, and the class field theory for curves over $p$-adic fields.  

\subsection{Generalized Albanese Variety}\label{genalbsection} For a  projective variety $X$ over a field $k$ such that $X(k)\neq\emptyset$, the usual Albanese variety $Alb_X$ is an abelian variety such that for every $x_0\in X(k)$ there is a morphism $\phi_{x_0}:X\to Alb_X$ sending $x_0$ to the zero element of $Alb_X$, and if $f:X\to B$ is any morphism to an abelian variety $B$ with $f(x_0)=0_B$, then $f$ factors uniquely through a homomorphism $Alb_X\to B$. We recall that $Alb_X$ is the dual abelian variety to the Picard variety $\Pic^0_X$ of $X$. 
	
Serre in his \'{e}xpos\'{e} (\cite{Se}) constructed an analog for an open subvariety $U$ of a projective variety $X$. In what follows we assume that $U(k)\neq\emptyset$, and we fix a basepoint $x_0\in U(k)$. The Albanese variety $Alb_U$ is a semi-abelian variety over $k$ equipped with a morphism $\psi_{x_0}:U\to Alb_U$ which is universal for morphisms from $U$ to semi-abelian varieties sending $x_0$ to zero.
When the complement $X-U$ is of codimension at least $2$,  $Alb_U=Alb_X$. 
Next suppose that $X-U$ is the support of a divisor $D$ on $X$. In this case $Alb_U$ is an extension of $Alb_X$ by a torus $T$. Let us recall how this extension is obtained. Write $D=D_1+\cdots+D_r$, where each $D_i$ is a prime divisor of $X$. We assume for simplicity that each connected component $D_i$ of $D$ is geometrically connected. Let 
$\displaystyle I=\Z^r=\bigoplus_{i=1}^r\Z((D_i)_{\overline{k}})$ be the subgroup of $\Div(X_{\overline{k}})$ consisting of divisors supported in $D_{\overline{k}}$. Since $D$ is defined over $k$, $I$ is stable under the $G_k:=\Gal(\overline{k}/k)$-action on $\Div(X_{\overline{k}})$. In fact, the assumption that each $D_i$ is geometrically connected implies that $I$ has trivial $G_k$-action, and hence $I\simeq\Z^r$ as a $G_k$-module.  
Let $J$ be the following kernel, 
\[J=:\ker[I\hookrightarrow \Div(X_{\overline{k}})\to \Pic(X_{\overline{k}})\to \NS(X_{\overline{k}})],\]
and $j$ be the inclusion $j:J\hookrightarrow I$. Notice that $J$ comes equipped with a natural homomorphism $\varphi:J\to\Pic^0(X_{\overline{k}})$. Since $I$ is a subgroup of $\Div(X_{\overline{k}})^{G_k}=\Div(X)$, and we have an equality $\Pic^0(X_{\overline{k}})^{G_k}=\Pic^0(X)$ (given $X(k)\neq\emptyset$), 
we can descend $\varphi$ to a homomorphism \[\varphi:J\to\Pic^0(X)=\Pic^0_X(k).\] 
Since $J\simeq\Z^{r-n}$ for some $n\geq 1$, $\varphi$ corresponds to a tuple $\varphi=(\varphi_1,\ldots, \varphi_{r-n})\in\Pic^0_X(k)^{r-n}$. 
Recall that there is an isomorphism $\Ext^1(Alb_X, \G_m)\simeq\Pic_X^0(k)$, where $\Ext^1$ is taken in the category of algebraic groups over $k$ (see \cite[I.3, Lemma 3.1]{Mil06}). It then follows that $\varphi$ gives rise to an element $Alb_U\in\Ext^1(Alb_X,\G_m^{r-n})$. Set $T=\G_m^{r-n}$. Then $Alb_U$ fits into a short exact sequence 
\[0\to T\to Alb_U\to Alb_X\to 0.\]
Moreover, the free abelian group $J=\Z^{r-n}$ is precisely the character group $X^\star(T)=\Hom(T,\G_m)$ of the torus $T$. 
\begin{rem}
Without the assumption that each irreducible component $D_i$ of $D$ is geometrically connected, the resulting torus $T$  in general is not split. In what follows we will be making the even stronger assumption that each irreducible component $D_i$ has a $k$-rational point.     
\end{rem}

\begin{exm}\label{onecomponent} 
    Let $C\hookrightarrow X$ be a smooth projective connected curve and let $U=X-C$. Since $[C]$ is a $\Z$-linearly independent element of $\NS(X)$, it follows that $r=n=1$, and hence $Alb_X=Alb_U$.  This observation will be used in \autoref{sec:Suscomputs}. 
\end{exm}

\subsection{Suslin's Homology}\label{sec:Suslin} Let $U$ be a smooth quasi-projective variety over a field $k$. In \cite{Sus} Suslin and Voevodsky defined a class group $H_0^{\Sus}(U)$ of zero-cycles on $U$, which  coincides with the Chow group $\CH_0(U)$ when $U$ is proper, but it is larger otherwise. We recall the definition below. 
\begin{defn} (\cite{Sus})
Let $\displaystyle Z_0(U)=\bigoplus_{x\in U_{(0)}}\Z(x)$ be the free abelian group on all closed points of $U$. Then $H_0^{\Sus}(U)$ is the quotient of $Z_0(U)$ by the subgroup $R$ generated by $i_0^\star(Z)-i_1^\star(Z)$, where $i_\nu:U\to U\times \A^1$ stands for the inclusion $x\mapsto (x,\nu)$ for $\nu=0,1$, and $Z$ runs through all closed integral subschemes of $U\times\A^1$ such that the projection $Z\to\A^1$ is finite and surjective. 
\end{defn} This group is often referred to in the literature as \textit{Suslin's singular homology}. 

\subsection*{Relation to Wiesend's Class group}
When the base field $k$ is perfect, the group $H_0^\Sus(U)$ coincides with another class group, $C_0(U)$, called \textit{Wiesend's $0$-th ideal class group}. The name reflects that the definition was inspired by a similar group introduced by G. Wiesend (\cite{Wie}) for arithmetic schemes (also see \cite{Yam1}). 
We recall the set-up below. 
\begin{notn}
    Let $y\in U_{(1)}$. We will denote by $C(y)$ the closure of the point $y$ in $U$ (which is a closed integral curve in $U$), $\widetilde{C}(y)\twoheadrightarrow C(y)$ its normalization, $\widetilde{C}(y)\hookrightarrow\overline{C}(y)$ its smooth completion, and $C_\infty(y)=\overline{C}(y)-\widetilde{C}(y)$. 
\end{notn}

\begin{defn} (cf.~\cite[Definition 1.1]{Yam1})
Wiesend's $0$-th ideal class group is defined to be the quotient of $Z_0(U)$ by the subgroup $R'$ generated by divisors of functions $\dv(f)$, where $f\in k(y)^\times$ for some $y\in U_{(1)}$ satisfying $f=1$ on $C_\infty(y)$. 
\end{defn} 

It is a theorem of A. Schmidt (\cite[Theorem 5.1]{Sch}) that when $k$ is perfect there is an isomorphism 
\[H_0^\Sus(U)\cong C_0(U).\] In what follows we will be mostly using Suslin's homology, but we won't be distinguishing between these two groups. 

The group $H_0^\Sus(U)$ enjoys similar properties as the Chow group of zero-cycles $\CH_0(X)$ for $X$ projective. Namely, there is a well-defined degree map,
\[\deg:H_0^\Sus(U)\to\Z,\;\; [x]\mapsto [k(x):k].\] 
We will denote by $F^1(U)$ the subgroup of degree $0$ elements. That is, $F^1(U):=\ker(\deg)$. 
From now on assume that $U(k)\neq\emptyset$. Then the Albanese map $U\to Alb_U$ considered in \autoref{genalbsection} induces a homomorphism
$$\alb_U: F^1(U)\to Alb_U(k)$$  
that does not depend on the choice of base point $x_0\in U(k)$. 
A proof of why $\alb_U$ is well-defined using Suslin's definition can be found in \cite[Section 3]{SpSz}. 

\begin{defn} Suppose $U(k)\neq\emptyset$. 
    We define $F^2(U):=\ker[F^1(U)\xrightarrow{\alb_U}Alb_U(k)]$ to be the kernel of the generalized Albanese map. We thus have a filtration,
    \[H_0^\Sus(U)\supseteq F^1(U)\supseteq F^2(U)\supseteq 0.\] 
\end{defn}

\subsubsection{Relation to Generalized Jacobians}\label{sec:genJac}
    Wiesend's definition is often easier to use in applications. For example, it follows directly that when $U=\overline{C}-C_\infty$, where $\overline{C}$ is a smooth projective curve over a perfect field $k$ and $C_\infty$ is a reduced closed subvariety, then $C_0(U)$ is the group of classes of divisors on $\overline{C}$ prime to $C_\infty$ modulo $C_\infty$-equivalence defined by Serre in \cite[Chapter V.2]{Se2}. Its degree $0$ subgroup, $F^1(U)$, coincides with the generalized Jacobian $J_\mathfrak{m}$ of $\overline{C}$ corresponding to the modulus $\displaystyle \mathfrak{m}=\sum_{x\in C_\infty}x$. 
    Using this observation, one can immediately prove that for an arbitrary smooth quasi-projective variety $U$ the Albanese map $\alb_U: F^1(U)\to Alb_U(k)$ is well-defined using the universal property of generalized Jacobians.

	\subsection{Motivic Cohomology}\label{sec:motiviccoh}

    In this subsection, we recall some results from motivic cohomology for schemes over a field. For a detailed exposition, we refer the reader to \cite{Voe}, \cite{MVW}, \cite{CD2}, \cite{CD1} (see also \cite[Section~5]{GKR}). Here, we recall them briefly, restricting to what is relevant for this manuscript. Throughout this subsection, we denote $k$ to  be any field of characteristic  $0$.  For a Noetherian scheme $X$, we shall denote the category of separated schemes which are of finite type over $X$ by $\Sch_{X}$. We shall let $\Sm_{X}$ denote the category of smooth schemes over $X$. 

In \cite{Voe} (also \cite{MVW}), Voevodsky defined the rigid triangulated category of effective motivic complexes, denoted as $\textbf{DM}^{\eff,-}_{\Nis} (k)$. For a given Noetherian $k$-scheme $X$, there is a monoidal triangulated category of mixed motives  
$\textbf{DM}(X, \mathbb{Z})$ \cite[Theorem 11.4.5]{CD1} which agrees with the construction of Voevoedsky \cite{Voe} when $X = \Spec(k)$. We denote $ \underline{\textbf{DM}}_{cdh}(X, \mathbb{Z})$, the category defined in similar manner as $\textbf{DM}(X,  \mathbb{Z})$, where  one replaces the Nisnevich topology on $\Sm_{X}$ by the cdh-topology on $\Sch_{X}$. We let $\textbf{DM}_{cdh}(X,\mathbb{Z})$ be the full localizing triangulated subcategory of $ \underline{\textbf{DM}}_{cdh}(X, \mathbb{Z})$ generated by motives of the form $M_{X}(Y)(n)$ for $Y \in \Sm_{X}$ and $n \in \mathbb{Z}$. The assignment $X \mapsto \textbf{DM}_{cdh}(X, \mathbb{Z})$ satisfies Grothendieck's six functor formalism (\cite[A.5]{CD1}, \cite{CD2}). We denote $\mathbb{Z}_{X}$, the constant Nisnevich (resp. cdh) sheaf with transfers, which is also the identity object for the monoidal structure of $\textbf{DM}(X, \mathbb{Z})$ (resp. $\textbf{DM}_{cdh}(X, \mathbb{Z})$). We use the notation $\mathbb{Z}_{k}$ to denote $\mathbb{Z}_{\Spec(k)}$.

  Let $X$ be a finite type Noetherian scheme over $k$ with $f : X \rightarrow \Spec(k)$, the structure map. We then have the following objects
\begin{center}
	$M_{k}(X) = f_{\sharp}\mathbb{Z}_{X} \hspace{1mm} {\cong}^{1} \hspace{1mm} f_{!}f^{!}\mathbb{Z}_{k},  \hspace{1mm}  $and$  \hspace{2mm} M_{k}^{c}(X) = f_{\ast}f^{!}\mathbb{Z}_{k}$;
\end{center} 
in $\textbf{DM}(\Spec(k), \mathbb{Z}) \cong \textbf{DM}_{cdh}(\Spec(k), \mathbb{Z})$ (\cite[Corollary 5.9]{CD2}). All the above functors (except $f_{\sharp}$) are part of six functor formalism and the functor $f_{\sharp}$ is an adjoint to the functor $f^{\ast}$ (see \cite[(Remark 8.5)]{CD2}). Also, the isomorphism ${\cong}^{1}$ follows from \cite[(8.7.1)]{CD2}. 

\begin{defn}
For integers $q$ and $i$, we define the (motivic) cohomology, homology, cohomolgy with compact support as follows:

\medskip

\begin{center}
	$H_{M}^{i}(X, \mathbb{Z}(q)) = \Hom_{\mathbf{DM}_{cdh}(\Spec(k), \mathbb{Z})} (M_{k}(X), \mathbb{Z}_{k}(q)[i] )$;
	
	\vspace{1mm}
	
	$H^{M}_{i}(X, \mathbb{Z}(q)) = \Hom_{\mathbf{DM}_{cdh}(\Spec(k), \mathbb{Z})} (\mathbb{Z}_{k}(q)[i], M_{k}(X))$;
	
	\vspace{1mm}
	
		$H_{M,c}^{i}(X, \mathbb{Z}(q)) = \Hom_{\mathbf{DM}_{cdh}(\Spec(k), \mathbb{Z})} (M_{k}^{c}(X), \mathbb{Z}_{k}(q)[i] )$.
	
\end{center}

\end{defn}

\vspace{1mm}

\vspace{1mm}

It follows by \cite[Corollary 5.9]{CD2} that the change of topology functor $\textbf{DM}(X, \mathbb{Z}) \rightarrow  \textbf{DM}_{cdh}(X, \mathbb{Z})$ is an equivalence of monoidal triangulated categories if $X \in \Sm_{k}$. This implies that for $X \in \Sm_{k}$, the groups $H_{M}^{i}(X, \mathbb{Z}(q)) $ and $H^{M}_{i}(X, \mathbb{Z}(q)) $ can also be expressed as:

\begin{center}
	$H_{M}^{i}(X, \mathbb{Z}(q)) \cong \Hom_{\textbf{DM}^{\eff,-}_{\Nis} (k)} (M_{k}(X), \mathbb{Z}_{k}(q)[i] )$;
	
	\vspace{1mm}
	
	$H^{M}_{i}(X, \mathbb{Z}(q)) \cong \Hom_{\textbf{DM}^{\eff,-}_{\Nis} (k)} (\mathbb{Z}_{k}(q)[i], M_{k}(X))$.
	
	\vspace{1mm}

\end{center}
Moreover  in this case for any integer $q \geq 0$, there is a complex $\mathbb{Z}(q)_{X}$ of Zariski sheaves $\mathbb{Z}(q)_{X}$ on $X_{\Zar}$ (see \cite[Definition 3.1]{MVW}) such that the motivic cohomology groups for $X$ are isomorphic to the hypercohomology of the complex $\mathbb{Z}(q)_{X}$. In other words, for $X \in \Sm_{k}$, we have 
\begin{center}
	$H_{M}^{i}(X, \mathbb{Z}(q)) \cong \mathbb{H}_{\Zar}^{i} (X, \mathbb{Z}(q))$, and
\end{center}
\begin{equation}\label{motivic-higher-chow-iso}
    H_{M}^{i}(X, \mathbb{Z}(q)) \cong \CH^{q}(X, 2q-i)
\end{equation} (see \cite[Theorem 19.1]{MVW}, \cite[Corollary 8.12]{CD2}), where $\CH^{q}(X,i)$ denotes the Bloch's higher Chow groups (\cite{Bl2}, \cite[Definition 17.1]{MVW}).

\begin{theo}\label{thm:compactly-support-suslin}
    Let $X$ be a smooth scheme over $k$ of pure dimension $d$. 
    Then we have an isomorphism,
    \begin{equation}\label{compact-motivic-Sus-iso}
        H^{2d}_{M,c}(X, \mathbb{Z}(d)) \cong H^{\Sus}_{0}(X).
    \end{equation}
\end{theo}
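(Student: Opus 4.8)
The plan is to deduce the isomorphism from Poincaré--Atiyah duality in Voevodsky's category, turning the left-hand side into the weight-zero motivic homology that computes Suslin homology. The essential structural input is that, since $X$ is smooth of pure dimension $d$ and $\Char k=0$ (so resolution of singularities is available and the geometric motives form a rigid tensor category), the motive $M_k(X)$ is a strongly dualizable object of $\mathbf{DM}_{cdh}(\Spec k,\Z)$, and its dual is computed by the motive with compact support:
\[
M_k(X)^{\vee}\cong M_k^{c}(X)(-d)[-2d],\qquad\text{equivalently}\qquad M_k^{c}(X)\cong M_k(X)^{\vee}(d)[2d].
\]
This is relative purity in the six-functor formalism of \cite{CD1,CD2}: for the smooth structure map $f:X\to\Spec k$ one has $f^{!}\Z_k\cong \Z_X(d)[2d]$, whence $M_k^{c}(X)=f_{*}f^{!}\Z_k\cong (f_{*}\Z_X)(d)[2d]$, and $f_{*}\Z_X$ is identified with the dual of the homological motive $M_k(X)=f_{\sharp}\Z_X$.

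Granting this, the computation is formal. First I would rewrite the left-hand group using its definition and the duality isomorphism, working throughout in $\mathbf{DM}_{cdh}(\Spec k,\Z)$:
\[
H^{2d}_{M,c}(X,\Z(d))=\Hom\bigl(M_k^{c}(X),\Z_k(d)[2d]\bigr)\cong\Hom\bigl(M_k(X)^{\vee}(d)[2d],\,\Z_k(d)[2d]\bigr).
\]
Since tensoring with $\Z_k(d)[2d]$ is an autoequivalence, the Tate twist and shift cancel on both arguments, giving $\Hom(M_k(X)^{\vee},\Z_k)$. Dualizability of $M_k(X)$ then supplies the evaluation/coevaluation isomorphism $\Hom(M_k(X)^{\vee},\Z_k)\cong\Hom(\Z_k,M_k(X))$ (using $\Hom(M^{\vee},\Z_k)\cong\Hom(\Z_k,M^{\vee\vee})=\Hom(\Z_k,M)$ for strongly dualizable $M$). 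By the definitions recalled in the excerpt, $\Hom(\Z_k,M_k(X))$ is exactly the weight-zero motivic homology $H^{M}_{0}(X,\Z(0))$.

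Finally I would identify $H^{M}_{0}(X,\Z(0))$ with $H^{\Sus}_{0}(X)$. Because $X$ is smooth, the change-of-topology equivalence \cite[Corollary 5.9]{CD2} identifies the relevant computation in $\mathbf{DM}_{cdh}(\Spec k,\Z)$ with one in $\mathbf{DM}^{\eff,-}_{\Nis}(k)$, where $M_k(X)$ is represented by the Suslin complex $C_{*}(\Z_{\mathrm{tr}}(X))$; the Suslin--Voevodsky computation $\Hom_{\mathbf{DM}}(\Z,M_k(X))\cong H^{\Sus}_{0}(X)$ (see \cite{MVW}) then closes the argument. The main obstacle is the first paragraph: one must invoke the duality $M_k(X)^{\vee}\cong M_k^{c}(X)(-d)[-2d]$ together with the dualizability of $M_k(X)$ in the cdh-motivic category \emph{over the base} $\Spec k$ (not merely for geometric motives over a field), and verify that the purity isomorphism is compatible with the constant-coefficient descriptions $M_k(X)=f_{\sharp}\Z_X$ and $M_k^{c}(X)=f_{*}f^{!}\Z_k$ used in the excerpt. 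Once these results of \cite{Voe,CD1,CD2,MVW} are in hand, every remaining step is a formal manipulation of twists, shifts, and the duality pairing.
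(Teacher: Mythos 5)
Your proposal is correct and takes essentially the same route as the paper: the paper's proof also consists of the duality isomorphism $H^{2d}_{M,c}(X,\Z(d))\cong H^{M}_{0}(X,\Z(0))$ (cited as \cite[Remark 2.2]{Yam1} rather than unwound from purity and dualizability as you do) followed by the Suslin--Voevodsky identification $H^{M}_{0}(X,\Z(0))\cong H^{\Sus}_{0}(X)$ (citing \cite{Sus} and \cite[Theorem 1.3]{Yam1}). The only difference is presentational: you derive the duality step from the six-functor formalism of \cite{CD1,CD2}, whereas the paper invokes it as a known result.
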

\begin{proof}
 By the duality isomorphism, we have
 $ H_{M,c}^{2d}(X, \mathbb{Z}(d)) \cong H^{M}_{0}(X, \mathbb{Z}(d))$ (\cite[Remark 2.2]{Yam1}). Also, we have $H^{M}_{0}(X, \mathbb{Z}(d)) \cong H^{\Sus}_{0}(X)$ (\cite{Sus}, \cite[Theorem 1.3]{Yam1}). Combining the above two isomorphisms, we get the desired result. 
 
 \end{proof}

\begin{lem}\label{lem:cohomology Ex-seq}
	Let $X $ be a projective variety over $k$,  $\iota \colon W \inj X$ be a closed immersion, and 
	$u \colon U \inj X$ the inclusion of the complement. There is a long exact sequence
	
	\begin{center}
		$ \hspace*{0.5cm}   \cdots \to H^i_{M,c}(U, \mathbb{Z}(j)) \xrightarrow{u_*}
		H^i_M(X, \mathbb{Z}(j)) 
		\xrightarrow{\iota^*}  H^i_{M}(W, \mathbb{Z}(j)) \xrightarrow{\partial}
		H^{i+1}_{M,c}(U, \mathbb{Z}(j)) \to \cdots $
	\end{center}

\end{lem}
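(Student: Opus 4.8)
The plan is to deduce the long exact sequence from a single distinguished triangle of motives in $\mathbf{DM}_{cdh}(\Spec(k), \mathbb{Z})$ by applying the contravariant cohomological functor $\Hom(-, \mathbb{Z}_{k}(j)[i])$. Concretely, I would first produce a distinguished triangle
\[
M_{k}(W) \xrightarrow{\iota_{\ast}} M_{k}(X) \to M_{k}^{c}(U) \xrightarrow{+1} M_{k}(W)[1],
\]
and then observe that, since $\Hom(-, \mathbb{Z}_{k}(j)[i])$ is contravariant and cohomological, such a triangle yields for varying $i$ precisely the asserted sequence: the three terms become $H^{i}_{M,c}(U,\mathbb{Z}(j))$, $H^{i}_{M}(X,\mathbb{Z}(j))$ and $H^{i}_{M}(W,\mathbb{Z}(j))$, the morphism $M_{k}(X)\to M_{k}^{c}(U)$ induces $u_{\ast}$, the morphism $M_{k}(W)\to M_{k}(X)$ induces $\iota^{\ast}$, and the connecting morphism of the triangle induces the boundary map $\partial \colon H^{i}_{M}(W) \to H^{i+1}_{M,c}(U)$.

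The construction of the triangle is where the six-functor formalism enters. Write $f\colon X \to \Spec(k)$, $g = f\circ \iota \colon W \to \Spec(k)$ and $h = f \circ u \colon U \to \Spec(k)$. The \emph{localization} (recollement) property of $\mathbf{DM}_{cdh}$ (\cite{CD1}, \cite{CD2}) gives, for the closed immersion $\iota\colon W \inj X$ with open complement $u\colon U \inj X$, a distinguished triangle of endofunctors
\[
\iota_{\ast}\iota^{!} \to \id \to u_{\ast}u^{\ast} \xrightarrow{+1}.
\]
I would evaluate this triangle on the dualizing object $\omega := f^{!}\mathbb{Z}_{k}$ and then apply $f_{\ast}$. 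Using the functoriality identities $f_{\ast}\iota_{\ast} = g_{\ast}$, $\iota^{!}f^{!} = g^{!}$, $f_{\ast}u_{\ast} = h_{\ast}$, together with $u^{\ast}f^{!} \cong u^{!}f^{!} = h^{!}$ (valid because $u$ is an open immersion, so $u^{\ast} = u^{!}$), the three terms are identified as
\[
f_{\ast}\iota_{\ast}\iota^{!}\omega = M_{k}^{c}(W), \quad f_{\ast}\omega = M_{k}^{c}(X), \quad f_{\ast}u_{\ast}u^{\ast}\omega = M_{k}^{c}(U),
\]
producing a triangle $M_{k}^{c}(W) \to M_{k}^{c}(X) \to M_{k}^{c}(U) \xrightarrow{+1}$. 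Finally, because $X$ is projective and $W$ is closed in $X$, both $f$ and $g$ are proper, so $f_{!} = f_{\ast}$ and $g_{!} = g_{\ast}$; hence $M_{k}^{c}(X) \cong M_{k}(X)$ and $M_{k}^{c}(W) \cong M_{k}(W)$, and the triangle becomes the desired one. (This is exactly the point where projectivity is used; $U$ itself need not be proper, which is why it appears with compact support.)

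The routine part is then the purely formal passage from a distinguished triangle to a long exact sequence via $\Hom(-, \mathbb{Z}_{k}(j)[i])$, together with the bookkeeping check that the induced maps are $u_{\ast}$ and $\iota^{\ast}$ as stated. The only genuinely substantive input — and the step I would be most careful about — is the localization triangle and the base-change/functoriality identities above: these hold in $\mathbf{DM}_{cdh}$ for possibly singular $X$ precisely because one works with the cdh-topology and the full six-functor formalism of Cisinski--Déglise rather than with the Nisnevich model, which is the reason the statement is phrased in $\mathbf{DM}_{cdh}(\Spec(k),\mathbb{Z})$. I expect the main obstacle to be little more than tracking the variances carefully — proper pushforward of motives becoming pullback on cohomology, and the open-immersion extension becoming $u_{\ast}$ on compactly supported cohomology — so that each arrow in the final sequence carries the correct name.
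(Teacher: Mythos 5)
Your proof is correct and is in substance the same as the paper's: the distinguished triangle $M_{k}^{c}(W)\to M_{k}^{c}(X)\to M_{k}^{c}(U)\xrightarrow{+1}$ that you build from the recollement $\iota_{\ast}\iota^{!}\to\id\to u_{\ast}u^{\ast}$ is precisely the content of the result the paper simply cites (\cite[Theorem 5.11]{CD2}, also \cite[Lemma 8.4]{GKR}), namely the long exact sequence of compactly supported motivic cohomology for the triple $(W,X,U)$. Beyond that, both arguments conclude identically, using properness of $X$ and $W$ to identify $H^{i}_{M,c}$ with $H^{i}_{M}$ (equivalently $M_{k}^{c}$ with $M_{k}$) on those two terms, so your write-up amounts to an unpacked, self-contained version of the paper's two-line proof.
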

\begin{proof}
By \cite[Theorem 5.11]{CD2} (also \cite[Lemma 8.4]{GKR}) we have 
\begin{center}
		$  \hspace*{0.5cm}   \cdots \to H^i_{M,c}(U, \mathbb{Z}(j)) \xrightarrow{u_*}
		H^i_{M,c}(X, \mathbb{Z}(j)) 
		\xrightarrow{\iota^*}  H^i_{M,c}(W, \mathbb{Z}(j)) \xrightarrow{\partial}
		H^{i+1}_M,c(U, \mathbb{Z}(j)) \to \cdots $
	\end{center}
As $W$ and $X$ are projective,  it follows that $H^i_{M,c}(W, \mathbb{Z}(j)) \cong H^i_{M}(W, \mathbb{Z}(j))$ and $H^i_{M,c}(X, \mathbb{Z}(j)) \cong H_{M}^i(X, \mathbb{Z}(j))$, which concludes the proof of the lemma.

\end{proof}

\subsection{$K$-theory for Curves over $p$-adic Fields}\label{k-theory-curves}\label{sec:SK1}
In this subsection we recall some necessary facts from the Class field theory for curves over $p$-adic fields as developed by S. Bloch (\cite{Bl1}) and S. Saito (\cite{Sai}). 

Let $C$ be a smooth projective curve over a field $k$. We consider the $K$-group
\[SK_1(C):=\cok\left[\bigoplus_{x\in C_{(0)}}\partial_x:K_2(k(C))\to \bigoplus_{x\in C_{(0)}}k(x)^\times\right],\]    where for each closed point $x\in C$, $\partial_x: K_2(k(C))\to k(x)^\times$ is the boundary map in $K$-theory.  The pointwise norm maps $N_{k(x)/k}:k(x)^\times\to k^\times$
induce a \textit{norm map} 
\[N:SK_1(C)\to k^\times,\] 
whose kernel is traditionally denoted by $V(C)$,  
yielding an exact sequence 
\[0\to V(C)\to SK_1(C)\xrightarrow{N} k^\times.\] 
Moreover, when $C$ has a $k$-rational point $x_0\in C(k)$, the closed immersion $\iota_{x_0}:\Spec(k)\hookrightarrow C$ gives a splitting to the norm map, and hence we have a decomposition 
\begin{equation}\label{eq0}
    SK_1(C)\cong V(C)\oplus k^\times.
\end{equation} 

From now on, we assume that $k$ is a finite extension of the $p$-adic field $\Q_p$.  In this case, Bloch (\cite{Bl1}) and Saito (\cite{Sai})  constructed \textit{reciprocity maps}  
\[\sigma: SK_1(C)\to\pi_1^{\ab}(C) \;\;\;\;\;\text{and}\;\;\;\;\;\tau: V(C)\to T_{G_k},\]
where $\pi_1^{\ab}(C)$ is the abelian \'{e}tale fundamental group of $C$, $T$ is the Tate module associated to the Jacobian $J_C$ of $C$, $G_k=\Gal(\overline{k}/k)$, and $T_{G_k}$ are the $G_k$-coinvariants.  Bloch and Saito studied the kernel and the image of these maps when the Jacobian $J_C$ has respectively good and bad reduction. The following theorem summarizes some of the main results in the class field theory of smooth projective curves. 
\begin{theo}\label{CFTcurves} (\cite[Theorem 2.9]{Bl1}, \cite[Chap. II, Theorems 5.1 \& 4.1]{Sai}). Let $C$ be a smooth projective geometrically connected curve over a $p$-adic field $k$. Then the following are true:
\begin{enumerate}
    \item[(i)] The kernel of $\sigma$ (resp. $\tau$) is the maximal divisible subgroup of $SK_1(C)$ (resp. $V(C)$). 
    \item[(ii)] The image of $\tau$ is finite. 
\end{enumerate}
\end{theo}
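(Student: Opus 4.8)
The plan is to construct $\sigma$ and $\tau$ out of local class field theory and then to understand them after reduction modulo $n$, where the situation is governed by arithmetic duality. Concretely, for each closed point $x\in C_{(0)}$ local class field theory provides a map $k(x)^\times\to\pi_1^{\ab}(\Spec k(x))\to\pi_1^{\ab}(C)$, and these assemble into a homomorphism $\bigoplus_{x}k(x)^\times\to\pi_1^{\ab}(C)$; the reciprocity law (the total residue of any symbol in $K_2(k(C))$ vanishes) ensures that this homomorphism kills the image of the tame symbols, hence descends to $\sigma\colon SK_1(C)\to\pi_1^{\ab}(C)$. Restricting to the norm-zero subgroup $V(C)$ and projecting $\pi_1^{\ab}(C)$ onto its geometric (Tate-module) quotient yields $\tau$. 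The structural fact I would lean on throughout is that both targets $\pi_1^{\ab}(C)$ and $T_{G_k}$ are \emph{profinite}.

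First I would dispose of the easy half of (i). A profinite abelian group has no nonzero divisible element, so every divisible subgroup of $SK_1(C)$ (resp. of $V(C)$) is killed by $\sigma$ (resp. $\tau$); thus $SK_1(C)_{\dv}\subseteq\ker\sigma$ and $V(C)_{\dv}\subseteq\ker\tau$. Since $\sigma$ factors through the profinite completion, left-exactness of $\varprojlim$ gives $\ker\sigma=\bigcap_n n\,SK_1(C)$ as soon as $\sigma\bmod n$ is injective for every $n$, and likewise for $\tau$. The real content of (i) is therefore this injectivity modulo $n$.

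To prove it I would pass to finite coefficients and invoke duality. Identifying $SK_1(C)\cong H^1_{\Zar}(C,\mathcal{K}_2)$ as the cohomology of the Gersten complex, the Merkurjev--Suslin isomorphism $\mathcal{K}_2/n\cong\mathcal{H}^2(\mu_n^{\otimes 2})$ and the Bloch--Ogus spectral sequence give $SK_1(C)/n\hookrightarrow H^3_{\et}(C,\mu_n^{\otimes 2})$, while $\pi_1^{\ab}(C)/n\cong H^1_{\et}(C,\Z/n)^\vee$. Under these identifications $\sigma\bmod n$ is the cup-product pairing $H^3_{\et}(C,\mu_n^{\otimes 2})\times H^1_{\et}(C,\Z/n)\to H^4_{\et}(C,\mu_n^{\otimes 2})\cong\Z/n$, which is perfect by the arithmetic duality for $C$ over the $p$-adic field $k$ (combining geometric Poincar\'e duality with local Tate duality). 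Hence $\sigma\bmod n$ is injective. The finiteness of the groups $H^3_{\et}(C,\mu_n^{\otimes 2})$, equivalently of $SK_1(C)/n$, then lets a standard argument identify $\bigcap_n n\,SK_1(C)$ with the maximal divisible subgroup, completing (i); the argument for $\tau$ is identical with $J_C[n]$-coefficients and local Tate duality.

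Finally, for (ii) I would use that $T_{G_k}$, being a quotient of the free $\wh{\Z}$-module $T$ of rank $2g$, is a finitely generated $\wh{\Z}$-module; hence its submodule $\im(\tau)$ is automatically finitely generated, and the claim reduces to showing that $\im(\tau)$ is \emph{torsion}. If $J_C$ has good reduction this is immediate: the Weil bounds make $\Frob-1$ invertible on $T_\ell J_C\otimes\Q_\ell$ for each $\ell$ and a unit for all but finitely many $\ell$, so that $T_{G_k}$ is already finite. The essential difficulty—and the step I expect to be the main obstacle—is bad reduction, where the toric part of $J_C$ contributes free $\wh{\Z}$-directions to $T_{G_k}$ (for instance the cyclotomic part of a split $\G_m$ when $k$ contains enough roots of unity), so that one must genuinely show $\im(\tau)$ misses them. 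Here I would pass to a semistable model, bring in the N\'eron model with its finite component group, and analyze the natural filtration on $V(C)$ induced by the unit filtrations on the local factors $k(x)^\times$, in order to confine $\im(\tau)$ to the torsion subgroup of $T_{G_k}$. A finitely generated torsion $\wh{\Z}$-module is finite, which gives (ii); Mattuck's description of $J_C(k)$ as a finitely generated $\Z_p$-module up to finite index is what pins down the torsion bound. Controlling the filtration tightly enough to exclude the toric directions is precisely the delicate reduction-by-reduction analysis carried out by Bloch and Saito.
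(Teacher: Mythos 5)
First, a point of calibration: the paper does not prove this theorem at all. It is quoted from Bloch \cite[Theorem 2.9]{Bl1} and Saito \cite[Chap.~II, Theorems 5.1 \& 4.1]{Sai}, and downstream the paper only uses its consequence \autoref{cor:SK1structure}. So your proposal has to be judged against the original Bloch--Saito arguments. For part (i) your outline does follow their strategy: build $\sigma$ from local class field theory and Weil reciprocity, note that a profinite group has no nonzero divisible subgroup, and reduce everything to injectivity of $\sigma\bmod n$, obtained from the perfect pairing $H^3_{\et}(C,\mu_n^{\otimes 2})\times H^1_{\et}(C,\Z/n)\to\Z/n$. But the crux --- that under the identifications $SK_1(C)/n\hookrightarrow H^3_{\et}(C,\mu_n^{\otimes 2})$ and $\pi_1^{\ab}(C)/n\cong H^1_{\et}(C,\Z/n)^\vee$ the map $\sigma\bmod n$ \emph{is} the cup-product pairing --- is precisely the compatibility theorem that occupies Bloch and Saito, and you assert it in one sentence. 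Moreover, your last step, passing from $\ker\sigma\subseteq\bigcap_n n\,SK_1(C)$ to $\ker\sigma=SK_1(C)_{\dv}$, is not the ``standard argument'': the standard lemma identifying the first Ulm subgroup with the maximal divisible subgroup requires finiteness of the $n$-torsion $A_n$, not of $A/nA$. The statement you need is in fact true (finite $A/pA$ forces each primary torsion component to be divisible-plus-finite, after which the nested-finite-sets argument applies), but it needs a genuine argument; the cleaner route, and the one taken in the literature, is to combine mod-$n$ injectivity with part (ii): if $N$ is the exponent of the finite group $\im(\tau)$, then $N\,V(C)\subseteq\ker\tau$ and $\ker\tau\subseteq nN\,V(C)\subseteq n\ker\tau$ for every $n$, so $\ker\tau$ is divisible. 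Note this makes (i) logically dependent on (ii).

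Part (ii) is where the proposal genuinely fails, and it is the part carrying the arithmetic content. Three concrete problems. (a) The reduction ``finitely generated $\widehat{\Z}$-module, hence torsion implies finite'' is broken: $\im(\tau)$ is only a subgroup of $T_{G_k}$, not a $\widehat{\Z}$-submodule ($\tau$ is a homomorphism of abstract groups); $\widehat{\Z}$ is not Noetherian; and the torsion subgroup of even a cyclic $\widehat{\Z}$-module can be infinite --- $\prod_\ell\Z/\ell$ is generated by $(1,1,\dots)$ over $\widehat{\Z}$ and its torsion subgroup is $\bigoplus_\ell\Z/\ell$. That the torsion of $T_{G_k}$ is finite is itself a nontrivial statement about N\'eron component groups, i.e.\ part of what must be proved. (b) Your good-reduction argument via Weil bounds controls $(T_\ell)_{G_k}$ only for $\ell\neq p$; for $\ell=p$ the module $T_p$ is ramified, ``$\Frob-1$'' makes no sense, and the finiteness of $(T_p)_{G_k}$ is exactly where Bloch's analysis of ordinary versus supersingular reduction via the formal group enters --- the hardest point of his paper. (c) For bad reduction you offer only a plan and concede that the needed filtration analysis is ``precisely the delicate reduction-by-reduction analysis carried out by Bloch and Saito.'' Since (ii) is both the deepest assertion and the one the present paper actually consumes (through \autoref{cor:SK1structure}), your text is a reasonable road map for (i) but not a proof of the theorem.
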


Note that since $k$ is $p$-adic, the multiplicative group $k^\times$ has no nonzero divisible elements, and hence the maximal divisible subgroups of $SK_1(C)$ and $V(C)$ coincide. Thus, the above theorem yields the following Corollary.

\begin{cor}\label{cor:SK1structure}
    Let $C$ be a smooth projective connected curve over a $p$-adic field $k$ with $C(k)\neq\emptyset$. Then we have a decomposition 
    \[SK_1(C)\simeq SK_1(C)_{\dv}\oplus F\oplus k^\times,\] where $F$ is some finite group. 
\end{cor}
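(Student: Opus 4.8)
The plan is to reduce everything to the Bloch--Saito reciprocity map $\tau$ on $V(C)$ and then invoke the injectivity of divisible abelian groups. Since $C(k)\neq\emptyset$, the splitting \eqref{eq0} gives $SK_1(C)\cong V(C)\oplus k^\times$, so it suffices to analyze $V(C)$ and then reassemble. Because $k^\times$ is reduced (it contains no nonzero divisible element, as $k$ is $p$-adic), the maximal divisible subgroup of $SK_1(C)$ is carried entirely by the first factor; that is, $SK_1(C)_{\dv}=V(C)_{\dv}$, in agreement with the remark preceding the statement.

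First I would record what Theorem \ref{CFTcurves} says about $\tau:V(C)\to T_{G_k}$. Part (i) identifies $\ker(\tau)$ with the maximal divisible subgroup $V(C)_{\dv}$, and part (ii) says that $\img(\tau)$ is finite. Hence $\tau$ induces an isomorphism $V(C)/V(C)_{\dv}\xrightarrow{\sim}\img(\tau)$, exhibiting $V(C)/V(C)_{\dv}$ as a finite group, which I will call $F$.

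Next I would split off the divisible part. A divisible abelian group is injective as a $\Z$-module, so the inclusion $V(C)_{\dv}\hookrightarrow V(C)$ is split; thus $V(C)\cong V(C)_{\dv}\oplus F$ with $F\cong V(C)/V(C)_{\dv}$ finite. Combining this with $SK_1(C)\cong V(C)\oplus k^\times$ and $SK_1(C)_{\dv}=V(C)_{\dv}$ yields the claimed decomposition $SK_1(C)\cong SK_1(C)_{\dv}\oplus F\oplus k^\times$.

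There is no serious obstacle here: the arithmetic content is entirely absorbed into Theorem \ref{CFTcurves}, and the remaining moves are formal. The only point that deserves care is the interplay between the divisible subgroups of $SK_1(C)$ and of $V(C)$ --- one must use that $k^\times$ has no nonzero divisible elements to be sure that no divisibility is lost or gained when passing across the splitting \eqref{eq0} --- but this is precisely the observation recorded just before the statement.
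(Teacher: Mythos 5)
Your proposal is correct and follows essentially the same route as the paper: the paper derives this corollary directly from \autoref{CFTcurves} together with the splitting \eqref{eq0} and the observation (stated just before the corollary) that $k^\times$ has no nonzero divisible elements, so that $SK_1(C)_{\dv}=V(C)_{\dv}$, with the final splitting coming from injectivity of divisible groups exactly as you describe. There is nothing to add.
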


\vspace{3pt}	

\section{Preparatory Results}\label{sec:prelims}
In this section we prove a number of preliminary results that will be used to prove \autoref{thm:main2intro} in the next section.

\subsection{Torsion-by-divisible}
Let $A$ be an abelian group and $A_{\dv}$ its maximal divisible subgroup. Then the short exact sequence 
\[0\to A_{\dv}\to A\to A/A_{\dv}\to 0\] splits, since divisible groups are injective $\Z$-modules. If the group $A/A_{\dv}$ is torsion of finite exponent, we will say that $A$ is \textit{torsion of finite exponent by divisible}. 

The following lemma is probably well-known to the experts.   

\begin{lem}\label{lem:F+D} Let $A\xrightarrow{f} B\xrightarrow{g} C\to 0$ be an exact sequence of abelian groups. If the groups $A,C$ are torsion of finite exponent by divisible, then the same is true for $B$.    
\end{lem}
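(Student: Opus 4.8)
The plan is to work with the reformulation that an abelian group $G$ is torsion of finite exponent by divisible if and only if there is an integer $N\geq 1$ with $NG$ divisible, in which case necessarily $NG=G_{\dv}$. Indeed, if $G/G_{\dv}$ has exponent $N$ then $NG\subseteq G_{\dv}$, while conversely $NG$ divisible forces $NG\subseteq G_{\dv}$ and also $G_{\dv}=NG_{\dv}\subseteq NG$, so $NG=G_{\dv}$ and $G/G_{\dv}$ has exponent dividing $N$. This turns the statement into the concrete goal of producing a single integer annihilating $B/B_{\dv}$. As a first step I would reduce to a short exact sequence: since $\img f$ is a quotient of $A$ and the homomorphic image of a divisible group is divisible, any quotient of a torsion-of-finite-exponent-by-divisible group is again of this type, so $\img f$ is. Replacing the given sequence by $0\to \img f\to B\xrightarrow{q} C\to 0$, it suffices to prove the extension statement: if $0\to B'\xrightarrow{i} B\xrightarrow{q} C\to 0$ is exact with $B'$ and $C$ both torsion of finite exponent by divisible, then so is $B$.

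Next I would record two elementary stability facts, both consequences of the fact that divisible groups are closed under extensions and under quotients: quotienting $B$ by a divisible subgroup, or by a bounded-torsion subgroup, does not change whether $B$ is torsion of finite exponent by divisible. Applying the first of these to the divisible subgroup $i(B'_{\dv})\subseteq B$, I may replace $B'$ by $B'/B'_{\dv}$ and thereby assume $B'$ is bounded torsion, say of exponent $e$.

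The core of the argument is then a single computation. Set $P=q^{-1}(C_{\dv})$; it sits in an exact sequence $0\to B'\to P\to C_{\dv}\to 0$ with $B'$ bounded of exponent $e$ and $C_{\dv}$ divisible. I claim $eP$ is divisible: given $x=ep\in eP$ and $m\geq 1$, use divisibility of $C_{\dv}$ to choose $w\in P$ with $q(p)=m\,q(w)$; then $p-mw\in i(B')$ is killed by $e$, so $x=ep=m(ew)$ with $ew\in eP$. Hence $P$ is torsion of finite exponent by divisible. Finally $B/P\cong C/C_{\dv}$ has some finite exponent $e_2$, so $e_2 B\subseteq P$ and therefore $e\,e_2 B\subseteq eP\subseteq B_{\dv}$; combined with the stability reduction this shows $B$ is torsion of finite exponent by divisible.

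The main obstacle is exactly the bookkeeping in the last two steps, which is forced on us because the maximal-divisible-subgroup functor is not exact: a subgroup of a divisible group need not be divisible, so one cannot simply intersect the sequence with $B_{\dv}$ or split off divisible pieces naively. The purpose of the element chase establishing that $eP$ is divisible, together with the two stability reductions, is precisely to sidestep this failure of exactness and reduce the general extension to the tractable case where one side is bounded and the other is divisible.
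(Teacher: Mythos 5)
Your proof is correct, but it takes a genuinely different route from the paper's. You share only the first step, namely replacing $A$ by $\img(f)$ (using that the property passes to quotients) to reduce to a short exact sequence; after that the arguments diverge. The paper works homologically: it identifies $G_{\dv}$ as the image of $\Hom(\Q,G)\xrightarrow{i_G^{*}}\Hom(\Z,G)=G$, applies $\Hom(-,A)$ to $0\to\Z\to\Q\to\Q/\Z\to 0$ to see that $\operatorname{Ext}^1_{\Z}(\Q,A)$ is torsion of finite exponent, and then runs the Snake lemma on the morphism from the $\Hom(\Q,-)$ exact sequence to the $\Hom(\Z,-)$ one; the non-exactness of the maximal-divisible-subgroup functor, which you correctly flag as the central obstacle, is thereby absorbed into the bounded group $\operatorname{Ext}^1_{\Z}(\Q,A)$. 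Your argument instead stays entirely elementary: the reformulation ``$G$ is torsion of finite exponent by divisible iff $NG$ is divisible for some $N\geq 1$, in which case $NG=G_{\dv}$'' is correct, the passage to $B/i(B'_{\dv})$ is justified because an extension of a divisible group by a divisible group is divisible, and the element chase is sound ($p-mw\in i(B')$ is killed by $e$, so $ep=m(ew)$ with $ew\in eP$, whence $eP$ is divisible and $e\,e_2B\subseteq eP\subseteq B_{\dv}$). What your route buys is self-containedness (no Ext groups, no Snake lemma) and an explicit exponent bound for $B/B_{\dv}$, namely the product of exponents coming from $A/A_{\dv}$ and $C/C_{\dv}$; what the paper's route buys is brevity given standard homological machinery and a conceptual identification of the obstruction. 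One cosmetic remark: your second stability fact (quotients by bounded-torsion subgroups) is stated but never used, since the core computation subsumes it.
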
	
\begin{proof}

Consider the short exact sequence $ 0 \rightarrow A'\xrightarrow{i} B\xrightarrow{g} C\to 0$, where $A'$ is the image of $f$. Since $A \cong A_{\dv} \oplus A/A_{\dv}$, where $A/A_{\dv}$ is torsion of finite exponent, we have $A' = f(A_{\dv}) + f(A/A_{\dv})$ with $D=f(A_{\dv})$  divisible and $T =  f(A/A_{\dv})$ torsion of finite exponent. Therefore,  $\displaystyle A' = D + T  \cong D \oplus \frac{T}{T \cap D}$.  So, without loss of generality, one can assume that $f$ is injective. 

Note that for any abelian group $G$, we have \[G_{\dv} = \img\left[\operatorname{Hom}(\mathbb{Q}, G) \xrightarrow{i_{G}^{\ast}} \operatorname{Hom}(\mathbb{Z}, G) = G\right],\] where the later map is induced from the inclusion $\mathbb{Z} \hookrightarrow \mathbb{Q}$. Since $A$ is torsion of finite exponent by divisible, it follows that the group $\operatorname{Ext^{1}_{\mathbb{Z}}}(\mathbb{Q}, A)  $ vanishes (see \cite[Proposition 7.33]{Rot}). 
To finish the proof, we now consider the following commutative diagram with exact rows:
\[
\begin{tikzcd}
0 \arrow[r] &  \operatorname{Hom}(\mathbb{Q}, A) \arrow[r, "f^\star"] \arrow[d, "i_{A}^{\ast}"] & \operatorname{Hom}(\mathbb{Q}, B) \arrow[r, "g^\star"] \arrow[d, "i_{B}^{\ast}"] & \operatorname{Hom}(\mathbb{Q}, C) \arrow[r] \arrow[d, "i_{C}^{\ast}"] & 0 \\
0 \arrow[r] & \operatorname{Hom}(\mathbb{Z}, A) \arrow[r] & \operatorname{Hom}(\mathbb{Z}, B)\arrow[r] & \operatorname{Hom}(\mathbb{Z}, C) \arrow[r] & 0.
\end{tikzcd} 
\]
Note that the top sequence is exact since $\operatorname{Ext^{1}_{\mathbb{Z}}}(\mathbb{Q}, A)$ vanishes, and the bottom short exact sequence is the given one. Since $A/A_{\dv}$ and $C/C_{\dv}$ are  both torsion of finite exponent, the Snake lemma yields that $B/B_{\dv}$ is torsion of finite exponent, which completes the proof of the lemma.

	\end{proof}

\subsection{N\'{e}ron-Severi computations}\label{NSXstuff}

In this subsection we assume that $X$ is a smooth projective surface over a perfect field $k$ and $D=D_1+\cdots+D_r$ a simple normal crossing divisor on $X$. We assume that the geometric N\'{e}ron-Severi group $\NS(X_{\overline{k}})$ has trivial $\Gal(\overline{k}/k)$-action, and that $D_i(k)\neq\emptyset$ for $i=1,\ldots, r$. In particular, each $D_i$ is geometrically connected. 
Since each $D_i$ is a smooth projective curve over $k$ with a $k$-rational point, $\NS(D_i)\simeq\Z$ via the degree map. Let $\iota:\supp(D)\hookrightarrow X$ be the closed embedding, which induces a pullback map 
	\[\iota^\star:\NS(X)\to\bigoplus_{i=1}^r \NS(D_i)=\Z^r.\] 
    This sends the class $[Z]$ of a divisor $Z$ in $X$ to $(Z\cdot D_1,\ldots, Z\cdot D_r)$, where $\cdot$ is the intersection product. 
For $i=1,\ldots, r$ let $[D_i]$ be the class of $D_i$ in $\NS(X)$. Let 
$s=r-n$ be the dimension of the maximal subtorus of $Alb_U$ as in \autoref{genalbsection}. Recall that $n$ is precisely the number of a maximal $\Z$-linearly independent subset of $\{[D_1],\ldots, [D_r]\}$ when considered as a subset of $\NS(X)$. Without loss of generality, we may assume that the elements $[D_1],\ldots, [D_n]$ are $\Z$-linearly independent.
	The following proposition is a key ingredient in the proof of  \autoref{thm:main2intro}. 
	
\begin{prop}\label{prop:NSXcomputations} Consider the set-up of the previous paragraph. Then $\iota^\star(\NS(X))$ is a subgroup of $\Z^r$ of rank $n$. 
\end{prop}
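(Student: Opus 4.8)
The plan is to reduce the statement to a rank--nullity count governed by the intersection pairing, the only substantial input being the nondegeneracy of that pairing. First I would note that the rank of a subgroup of $\bigoplus_{i=1}^r\Z$ is unchanged under $-\otimes_\Z\Q$, so it is enough to compute the dimension of the image of the induced $\Q$-linear map
\[
\iota^\star_\Q\colon V:=\NS(X)\otimes\Q\longrightarrow \Q^r.
\]
Since each coordinate of $\iota^\star$ is given by $[Z]\mapsto Z\cdot D_i=\langle [Z],[D_i]\rangle$, where $\langle-,-\rangle$ denotes the intersection pairing, the map $\iota^\star_\Q$ sends a class $v$ to the tuple $(\langle v,[D_1]\rangle,\ldots,\langle v,[D_r]\rangle)$; this is well defined precisely because intersection numbers depend only on the class in $\NS(X)$.

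Next I would identify $\ker(\iota^\star_\Q)$ with an orthogonal complement. Letting $W\subseteq V$ be the subspace spanned by $[D_1],\ldots,[D_r]$, a class $v\in V$ is killed by $\iota^\star_\Q$ exactly when $\langle v,[D_i]\rangle=0$ for every $i$, i.e. exactly when $v\in W^{\perp}$. By the description of $n$ recalled above, $n$ is the maximal number of $\Z$-linearly independent elements among the $[D_i]$, equivalently the rank of the subgroup of $\NS(X)$ that they generate, so $\dim_\Q W=n$.

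Finally I would invoke the Hodge index theorem, which guarantees that the intersection pairing on $V=\NS(X)\otimes\Q$ is nondegenerate. This yields $\dim_\Q W^{\perp}=\dim_\Q V-\dim_\Q W$, and therefore, by rank--nullity,
\[
\rank\,\iota^\star(\NS(X))=\dim_\Q\im(\iota^\star_\Q)=\dim_\Q V-\dim_\Q W^{\perp}=\dim_\Q W=n,
\]
as claimed. Everything here is routine linear algebra once nondegeneracy is in hand, so the only genuinely nontrivial ingredient is that classical fact about surfaces; the one place to be careful is matching the combinatorial definition of $n$ with the dimension of $W$, which is immediate from the definitions.
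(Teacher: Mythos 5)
Your proof is correct, and it takes a genuinely different (and slicker) route than the paper's. The paper argues in two steps by hand: first an induction on $n$ showing that $\pr_n\circ\iota^\star:\NS(X)\to\bigoplus_{i=1}^n\Z$ has finite cokernel (assuming a rank defect, it solves an explicit linear system against auxiliary divisors $S_1,\dots,S_{n-1}$ to produce a nonzero integral combination $bD_n-\sum_{j<n}a_jD_j$ that is numerically trivial, contradicting independence), and then a separate argument bounding the rank above by $n$, by writing each $[D_j]$ with $j>n$ as a rational combination of $[D_1],\dots,[D_n]$. Your orthogonal-complement argument --- identify $\ker(\iota^\star_\Q)$ with $W^\perp$, then apply rank--nullity together with nondegeneracy of the pairing --- obtains both bounds simultaneously. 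The comparison is instructive because the two proofs secretly rest on the same nontrivial input: when the paper asserts that a $\Z$-linearly independent class ``is not numerically equivalent to $0$,'' and again when it derives the contradiction closing its induction, it is using exactly the fact you invoke, namely that numerically trivial classes in $\NS(X)$ are torsion, equivalently that the intersection pairing on $\NS(X)\otimes\Q$ is nondegenerate; you make this explicit while the paper leaves it implicit. One minor caveat: the section is stated over an arbitrary perfect field, and in positive characteristic nondegeneracy of the pairing on $\NS(X)\otimes\Q$ requires Matsusaka's theorem (numerical and algebraic equivalence agree up to torsion) in addition to the Hodge index theorem for numerical classes; in the characteristic-zero setting where the proposition is actually applied your citation is the standard one, and since the paper's own proof uses the identical fact with no further justification, this does not put your argument at any disadvantage.
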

\begin{proof} 
By assumption the elements $[D_1],\ldots, [D_n]$ are $\Z-$linearly independent in $\NS(X)$. For each $m=1,\ldots, n$ we denote by $\pr_m:\Z^r\to\Z^m$ the projection onto the first $m$ factors. We consider the composition 
\[f_m:\NS(X)\xrightarrow{\iota^\star}\Z^r\xrightarrow{\pr_m}\Z^m.\]
We claim that this map has finite cokernel for every $1\leq m\leq n$. 
We proceed by induction on $m\geq 1$. Since $[D_1]\in \NS(X)$ is $\Z$-linearly independent, it is not numerically equivalent to $0$, and hence there is some curve $C$ in $X$ such that $C\cdot D_1\neq 0$. This proves the $m=1$ case. 

Next suppose $2\leq m\leq n$. By induction hypothesis there are divisors $S_1,\ldots, S_{m-1}$ in $X$ such that $\{\pr_{m-1}\circ\iota^\star([S_i]):1\leq i\leq m-1\}$ is a $\Z$-linearly independent subset of $\Z^{m-1}$. For each $i, j\in\{1,\ldots, m-1\}$ set $\mu_{ij}:=S_i\cdot D_j$. It follows that the matrix $A=(\mu_{ij})\in M_{m-1}(\Q)$ is invertible. It is clear that the set $\{f_{m}([S_1]),\ldots, f_{m}([S_{m-1}])\}\subset\Z^{m}$ is also $\Z$-linearly independent as the first $m-1$ coordinates of these vectors are the same as before. This means that $f_m(\NS(X))$ is a subgroup of $\Z^m$ of rank at least $m-1$. Suppose for contradiction that $\pr_m\circ\iota^\star$ does not have finite cokernel. This means that $\{f_{m}([S_1]),\ldots, f_{m}([S_{m-1}])\}\subset\Z^{m}$ is a maximal $\Z$-linearly independent subset of $f_m(\NS(X))$. It then follows that for every curve $C$ in $X$ there exist $q_{1,C},\ldots, q_{m-1,C}\in\Q$ such that 
\begin{equation}\label{eq10}
    (C\cdot D_1,\ldots, C\cdot D_m)=\sum_{i=1}^{m-1} q_{i,C}(S_i\cdot D_1,\ldots, S_i\cdot D_m).
\end{equation}
Let us rewrite the first $m-1$ coordinates of (\ref{eq10}) as follows: 
\[
\left\{\begin{array}{c}
    C\cdot D_1=\sum_{i=1}^{m-1} q_{i,C}\mu_{i1}   \\
     \cdots\\
     C\cdot D_{m-1}=\sum_{i=1}^{m-1} q_{i,C}\mu_{i,m-1}
\end{array}\right.
\] This can be thought of as a $\Q$-linear system with $m-1$ equations in the unknowns $q_{1,C},\ldots, q_{m-1,C}$. The matrix of this system is the transpose of $A$. Since this matrix is invertible, it follows that the system has a unique solution. In particular, for each $i=1,\ldots, m-1$, $q_{i,C}$ has an expression 
\begin{equation}
    q_{i,C}=\sum_{j=1}^{m-1}\lambda_{ij} (C\cdot D_j),
\end{equation} for some $\lambda_{ij}\in\Q$, which are independent of $C$. 

For $i\in\{1,\ldots, m-1\}$ set $\gamma_i=S_i\cdot D_m$, which is again independent of $C$. 
Combining all the above, the last coordinate of (\ref{eq10}) can be rewritten as 
\begin{eqnarray*}
 C\cdot D_m=&&\sum_{i=1}^{m-1} q_{i,C}(S_i\cdot D_m)=\sum_{i=1}^{m-1} q_{i,C}\gamma_i\\
 =&&\sum_{i=1}^{m-1}\gamma_i\left(\sum_{j=1}^{m-1}\lambda_{ij} (C\cdot D_j)\right).
\end{eqnarray*}
If we expand this relation we potentially reach an expression of the form
\[C\cdot\left(D_m-\sum_{i=1}^{m-1}\alpha_i D_i\right)=0,\] for some $\alpha_i\in\Q$ independent of $C$. Write $\alpha_i=\frac{a_i}{b}$ for some $a_i, b\in\Z$ with $b\neq 0$. Since the curve $C$ was arbitrary, it follows that the divisor $\displaystyle bD_m-\sum_{i=1}^{m-1}a_i D_i$ is numerically equivalent to $0$. But this contradicts the $\Z$-linear independence of $[D_1],\ldots, [D_m]$ for $m\leq n$ in $\NS(X)$. We conclude that $f_m(\NS(X))$ is a subgroup of $\Z^m$ of rank $m$, which completes the induction.

To finish the proof we need to show that the rank of $\iota^\star(\NS(X))$ is exactly $n$. For simplicity of notation we will only prove that $f_{n+1}(\NS(X))\otimes\Q$ is an $n$-dimensional subspace of $\Q^{r}$. The argument for a general $n+1\leq m\leq r$ is similar. By assumption, the subset $\{[D_1],\ldots, [D_n]\}\subset\NS(X)$ is $\Z$-linearly independent, while $\{[D_1],\ldots, [D_n], [D_{n+1}]\}$ is not. Working in $\NS(X)\otimes\Q$, this means that we can find $\lambda_1,\ldots, \lambda_n\in\Q$ such that $\displaystyle D_{n+1}-\sum_{i=1}^n \lambda_i D_i$ is numerically equivalent to $0$. Let $S\in\Div(X)$.  Then we can write
\begin{eqnarray*}
    f_{n+1}(S)=&&(S\cdot D_1,\ldots, S\cdot D_n, S\cdot D_{n+1})=\left(S\cdot D_1,\ldots, S\cdot D_n, \sum_{i=1}^n \lambda_i (S\cdot D_{i})\right)\\
    =&& S\cdot D_1(1,0,\ldots, 0, \lambda_1)+\cdots + S\cdot D_n (0,\ldots, 1, \lambda_n)\\
    &&=c_1(1,0,\ldots, 0, \lambda_1)+\cdots+c_n(0,\ldots, 1, \lambda_n),
\end{eqnarray*} where we set $c_i=S\cdot D_i$ for $i=1,\ldots, n$. Since the $\lambda_i\in\Q$ are independent of $S$, it follows that the vectors $\{(1,0,\ldots, \lambda_1),\ldots (0,\ldots, 1, \lambda_n)\}$ span $f_{n+1}(\NS(X))$, from where the claim follows.

\end{proof}

\begin{rem}\label{NSXfree}
   The group $\NS(X)$ is finitely generated abelian.  The pullback $\iota^\star$ clearly factors through the maximal free abelian subgroup $\NS(X)/\NS(X)_{\tor}$.  
\end{rem}
	
\subsection{Some results about algebraic tori}	
\begin{ass}\label{Assumption}
From now on and for the rest of this article we will be working over a finite extension $k$ of the $p$-adic field $\Q_p$. We will denote by $\sO_k$ the ring of integers of $k$,  by $\mathfrak{m}_k$ the maximal ideal of $\sO_k$, and by $\kappa=\sO_k/\mathfrak{m}_k$ its residue field. 
\end{ass} 

\begin{lem}\label{lem:torus0} Let $f:T_1(k)\to T_2(k)$ be a homomorphism of split tori over $k$. If $\cok(f)$ is torsion of finite exponent, then it is finite. 
\end{lem}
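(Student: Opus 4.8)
The plan is to exploit the fact that a homomorphism of split tori is completely encoded by an integer matrix, and then reduce everything to the one–dimensional computation of $k^\times/(k^\times)^n$.

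First I would record that, since $f$ is induced by a morphism of algebraic tori, it corresponds to a homomorphism of cocharacter lattices, i.e.\ to an integer matrix $M\in\mathrm{Mat}_{d_2\times d_1}(\Z)$ where $d_i=\dim T_i$; under the identifications $T_i(k)\cong(k^\times)^{d_i}$ the map $f$ sends $(t_1,\dots,t_{d_1})$ to $\bigl(\prod_j t_j^{m_{1j}},\dots,\prod_j t_j^{m_{d_2 j}}\bigr)$. Invertible integer matrices act as automorphisms of the lattices, hence of the $k$-points, so passing to Smith normal form $UMV=\operatorname{diag}(e_1,\dots,e_r,0,\dots,0)$ with $U\in\GL_{d_2}(\Z)$, $V\in\GL_{d_1}(\Z)$ and $r=\rank M$ leaves $\cok(f)$ unchanged up to isomorphism. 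This reduces the problem to a diagonal map, for which
$$\cok(f)\ \cong\ \bigoplus_{i=1}^{r}k^\times/(k^\times)^{e_i}\ \oplus\ (k^\times)^{\oplus(d_2-r)},$$
with every $e_i\neq 0$.

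Next I would dispose of the two kinds of factors separately. A uniformizer of $k$ has infinite order in $k^\times$, so the free summand $(k^\times)^{\oplus(d_2-r)}$ is non-torsion as soon as $d_2>r$; since $\cok(f)$ is assumed torsion of finite exponent (in fact torsion alone suffices here), this forces $d_2=r$, i.e.\ $M$ has full row rank. For the remaining factors I would invoke the structure of the multiplicative group of a local field, $k^\times\cong\Z\times\mu\times\Z_p^{[k:\Q_p]}$ with $\mu=\mu(k)$ finite, which gives that $k^\times/(k^\times)^{e_i}$ is finite for every $e_i\neq 0$. Combining the two points, $\cok(f)$ is a finite direct sum of finite groups, hence finite.

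The only step that genuinely uses the arithmetic of $k$—and what I would flag as the main obstacle—is the finiteness of the ``unit part'' $k^\times/(k^\times)^{n}$. Finiteness here does \emph{not} follow formally from compactness of $\sO_k^\times$: a general compact abelian group of finite exponent (such as $\prod_{\N}\Z/p$) can be infinite. It relies instead on $\sO_k^\times$ being topologically finitely generated, equivalently on $\sO_k^\times\cong\mu\times\Z_p^{[k:\Q_p]}$. Once this standard fact is granted, the rest of the argument is purely formal linear algebra over $\Z$.
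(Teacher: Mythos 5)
Your argument is internally correct, but it proves a narrower statement than the one the paper needs, and the narrowing is exactly where the difficulty lies. You read ``homomorphism of split tori'' as a map induced by a morphism of \emph{algebraic} tori, i.e.\ by an integer matrix on cocharacter lattices, and your whole reduction (Smith normal form, the splitting $\cok(f)\cong\bigoplus_{i=1}^{r}k^\times/(k^\times)^{e_i}\oplus(k^\times)^{d_2-r}$) depends on having such a matrix. However, the lemma is stated for a homomorphism of the groups of $k$-points, $f:T_1(k)\to T_2(k)$, and it is invoked in the proof of \autoref{prop:torus1} for a merely \emph{continuous} group homomorphism $f:(k^\times)^m\to(k^\times)^n$. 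Continuous homomorphisms of $p$-adic tori are genuinely more general than algebraic ones: writing $k^\times\cong\Z\times\mu\times\Z_p^{[k:\Q_p]}$, any $\Z_p$-linear (not $\Z$-linear) map on the factor $\Z_p^{[k:\Q_p]}$, or the map $x\mapsto\pi^{v(x)}$ factoring through the valuation, gives continuous endomorphisms of $k^\times$ with no associated cocharacter matrix, so Smith normal form is unavailable. Your proof therefore does not establish the lemma in the generality in which the paper uses it.

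The paper's own proof uses no structure on $f$ beyond its being a group homomorphism: setting $A=\img(f)$, the snake lemma applied to multiplication by $n$ on $0\to A\to T_2(k)\to\cok(f)\to 0$ yields an exact sequence
\[
0\to A_n\to T_2(k)_n\to\cok(f)_n\to A/n\to T_2(k)/n\to\cok(f)/n\to 0,
\]
and since $T_2(k)_n$ is finite and $A/n$ is a quotient of the finite group $T_1(k)/n$, the group $\cok(f)_n$ is finite for every $n\geq 1$; the finite-exponent hypothesis then gives $\cok(f)=\cok(f)_m$ finite. Note that the arithmetic input is precisely the one you correctly isolated—finiteness of $k^\times/(k^\times)^n$ and of the $n$-torsion $\mu_n(k)$—but it is packaged so that the homomorphism itself is never analyzed. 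To salvage your approach you would need a structure theory for arbitrary continuous homomorphisms in place of the cocharacter lattice, which is exactly what the diagram chase renders unnecessary.
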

\begin{proof}
	Consider the  exact sequence $0 \rightarrow A \rightarrow T_{2}(k)
 \rightarrow \cok(f) \rightarrow 0$, where $A$ is the image of the map $f$. For any $n \geq 1$, the short exact sequence induces the following long exact sequence:
 \begin{center}
 	 $0 \rightarrow A_n \rightarrow T_{2}(k)_n \rightarrow \cok(f)_{n} \rightarrow A/n \rightarrow T_{2}(k)/n  \rightarrow \cok(f)/n \rightarrow 0 $.
 \end{center}
 Since $k$ is a $p$-adic field, both $T_{1}(k)/n$ and $T_{2}(k)_{n}$ are finite, which in particular implies that $\cok(f)_{n}$ is finite for any $n \geq 1$. Moreover, by assumption $\cok(f) = \cok(f)_{m}$ for some integer $m $,  and hence $\cok(f)$ is finite.

   \end{proof}

\begin{prop}\label{prop:torus1} Let $f:T_1(k)\to T_2(k)$ be a  continuous homomorphism of split tori over $k$. If $\cok(f)$ is torsion, then it is finite. 
\end{prop}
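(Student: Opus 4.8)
The plan is to deduce this from \autoref{lem:torus0}: since that lemma already promotes a cokernel that is torsion \emph{of finite exponent} to a finite one, it is enough to show that continuity forces the torsion group $\cok(f)$ to have finite exponent (in fact the argument below will produce finiteness directly). Write $T_i(k)\cong(k^\times)^{d_i}$ and use the valuation to get short exact sequences $0\to U_i\to T_i(k)\xrightarrow{v_i}\Z^{d_i}\to 0$, where $U_i=(\sO_k^\times)^{d_i}$ is the unique maximal compact subgroup. As $f$ is continuous, $f(U_1)$ is compact, hence contained in the maximal compact subgroup $U_2$; thus $f$ restricts to $f|_{U_1}\colon U_1\to U_2$ and descends to a homomorphism $\bar f\colon\Z^n\to\Z^m$ on value groups. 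The snake lemma applied to this map of short exact sequences identifies the image of $U_2$ in $\cok(f)$ with $\ker(\cok(f)\to\cok(\bar f))$, giving an exact sequence
\[
0\longrightarrow U_2/(U_2\cap\im f)\longrightarrow \cok(f)\longrightarrow\cok(\bar f)\longrightarrow 0 .
\]
Since $\cok(f)$ is torsion, $\cok(\bar f)=\Z^m/\bar f(\Z^n)$ is a finitely generated torsion group, hence finite. It remains to control the compact contribution $U_2/(U_2\cap\im f)$.

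Next I would compute $U_2\cap\im f$ explicitly. Fixing a uniformizer splits $T_1(k)\cong U_1\oplus\Lambda$ with $\Lambda\cong\Z^n$, so $\im f=f(U_1)+f(\Lambda)$, and an element lies in $U_2$ exactly when its valuation vanishes, i.e. when its $\Lambda$-part lies in the finite-rank subgroup $\Lambda_0:=\ker(\bar f\circ v_1|_\Lambda)$. Choosing generators $\mu_1,\dots,\mu_r$ of $\Lambda_0$ yields
\[
U_2\cap\im f=f(U_1)+\sum_{j=1}^{r}\Z\,f(\mu_j).
\]
The crucial structural feature is that $f(U_1)$ is \emph{closed} in $U_2$ (a continuous image of a compact group), whereas the remaining part is \emph{finitely generated}. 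Passing to the profinite quotient $\bar U_2:=U_2/f(U_1)$, we obtain $U_2/(U_2\cap\im f)\cong\bar U_2/\Gamma$, where $\Gamma$ is the finitely generated subgroup of $\bar U_2$ generated by the images of the $f(\mu_j)$.

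The main obstacle is exactly that $\im f$ need not be closed, so one cannot simply invoke ``a torsion profinite group has finite exponent'' on the compact part: $U_2\cap\im f$ can be a genuinely non-closed, dense subgroup. This is resolved by combining the finite generation isolated above with a dimension count. Since $\bar U_2$ is a quotient of $U_2\cong(\text{finite})\times\Z_p^{\,m[k:\Q_p]}$, it is of the form $\bar U_2\cong E\times\Z_p^{\,c}$ with $E$ finite. Tensoring the torsion group $\bar U_2/\Gamma$ with $\Q$ shows that the induced map $\Gamma\otimes_\Z\Q\to\bar U_2\otimes_\Z\Q\cong\Q_p^{\,c}$ is surjective; but $\Gamma$ is finitely generated, so $\Gamma\otimes_\Z\Q$ is finite-dimensional over $\Q$, while $\Q_p^{\,c}$ is infinite-dimensional over $\Q$ as soon as $c\ge 1$. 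Hence $c=0$, $\bar U_2=E$ is finite, and therefore $U_2/(U_2\cap\im f)$ is finite. Feeding this back into the displayed exact sequence shows that $\cok(f)$ is finite; in particular it is of finite exponent, so \autoref{lem:torus0} applies as well.
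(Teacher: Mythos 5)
Your proof is correct, and its key mechanism is genuinely different from the paper's. Both arguments rest on the decomposition $k^\times\cong\Z\times\sO_k^\times$ and isolate how $f$ interacts with the compact part, but they diverge at the crucial step. The paper restricts $f$ to the profinite factor $(\sO_k^\times)^m$ (whose image \emph{is} closed), forms the profinite quotient $G=(\sO_k^\times)^n/\im(f\circ i_2)$, maps it to $\cok(f)$ via a map $\theta$, shows $\cok(\theta)$ is finite, and then invokes the structure theorem for torsion profinite abelian groups (\cite[Lemma 4.3.7]{RZ}) to conclude that $\im(\theta)$ has finite exponent; this only yields that $\cok(f)$ is torsion of finite exponent, so \autoref{lem:torus0} is still needed to finish. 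You instead run the snake lemma on the valuation sequences, compute $U_2\cap\im f=f(U_1)+f(\Lambda_0)$ explicitly with $\Lambda_0$ finitely generated, and kill the compact contribution by a $\Q$-dimension count: a finitely generated group cannot surject rationally onto $\Q_p^{\,c}$ unless $c=0$. This gives finiteness outright, bypassing both \autoref{lem:torus0} and the profinite structure theorem. A further merit of your route is that it confronts head-on the possibility that $\im f$, hence $U_2\cap\im f$, is not closed; this is exactly the delicate point in the paper's argument, whose assertion that $\ker(\theta)$ is closed ``by continuity'' tacitly presupposes that $\{0\}$ is closed in $\cok(f)$ with the quotient topology, i.e.\ that $\im f$ is closed. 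The one step you leave implicit---that a quotient of $(\text{finite})\times\Z_p^{d}$ by a closed subgroup is again of the form $(\text{finite})\times\Z_p^{c}$, since closed subgroups of such groups are $\Z_p$-submodules split along Sylow parts---is standard and at the same level of detail the paper itself operates at (it cites \cite[Theorem 4.3.5]{RZ} for a comparable structure fact in \autoref{prop:torus2}).
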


\begin{proof}
 In view of \autoref{lem:torus0}, it is enough to show that  $\cok(f)$ is torsion of finite exponent. Since $k$ is a $p$-adic field, we have an isomorphism of topological groups, $k^{\times} \cong \mathbb{Z} \times \mathcal{O}_{k}^{\times}$ (see \cite[Proposition II. 5.7]{Neu}), where $\mathbb{Z}$ is equipped with the discrete topology and $\mathcal{O}_{k}^{\times}$, a profinite abelian group. Thus,
\begin{center}
    $T_1(k) =  (k^{\times})^{m} \cong \mathbb{Z}^{m} \times (\mathcal{O}_{k}^{\times})^{m}$ and $T_2(k) = (k^{\times})^{n} \cong  \mathbb{Z}^{n} \times (\mathcal{O}_{k}^{\times})^{n}$
\end{center} 
  for some integers $m, n \geq 1$. Here, we consider $T_1(k)$ and $T_2(k)$ as topological groups with the corresponding product topologies, and $f$ as a continuous group homomorphism. Consider the following commutative diagram of topological abelian groups, where the horizontal sequences are exact.
 \[
 \begin{tikzcd}
(\mathcal{O}_{k}^{\times})^{m}   \arrow[r, "f \circ   i_{2}"] \arrow[d, "i_{2}"] & \mathbb{Z}^{n} \times (\mathcal{O}_{k}^{\times})^{n}  \arrow[r] \arrow[d, "id"] & \cok(f \circ i_{2}) \arrow[r] \arrow[d, two heads, "\lambda"] & 0 \\
 	\mathbb{Z}^{m} \times (\mathcal{O}_{k}^{\times})^{m} \arrow[r, "f"]                 &  \mathbb{Z}^{n} \times (\mathcal{O}_{k}^{\times})^{n} \arrow[r]                & \cok(f) \arrow[r]                & 0,
 \end{tikzcd}
 \]
where $i_{2}$ denotes the inclusion into the second factor, $id$ denotes the identity map,  and $\lambda$ denotes the induced map on the cokernel of $f \circ i_{2}$ and $f$, respectively, such that the diagram is commutative. Note that the last two groups in both sequences are equipped with the quotient topology, and hence all maps in the diagram are continuous. Since $(\mathcal{O}_{k}^{\times})^{m}$ is profinite, there is no nontrivial continuous group homomorphism from $(\mathcal{O}_{k}^{\times})^{m}$ to $\mathbb{Z}^{n}$. This implies that we have an isomorphism of topological groups:
\[
\cok(f \circ i_{2}) \cong \mathbb{Z}^{n} \times G,
\]
where $G$ is a profinite abelian group. In fact,  $G \cong \frac{(\mathcal{O}_{k}^{\times})^{n}}{\operatorname{im}(f \circ i_{2})}$, where the quotient is profinite because the image of the map $f \circ i_{2}$ is a closed subgroup of  $ \mathbb{Z}^{n} \times  (\mathcal{O}_{k}^{\times})^{n}$ and can be viewed as a subgroup of $(\mathcal{O}_{k}^{\times})^{n}$.  We denote by $i'_{2}$ the inclusion of $G$ into  $ \cok(f \circ i_{2})$ via the second factor.
Consider the composite map $\lambda \circ i'_{2}$
\[
G \xrightarrow{i'_{2}} \cok(f \circ i_{2}) \xrightarrow{\lambda} \cok(f),
\]

which is continuous; we denote it by $\theta$. This gives rise to the following commutative diagram of topological groups:
\[
\begin{tikzcd}
0 \arrow[r] & G \arrow[r, "i_{1}"] \arrow[d, "\theta"'] & G \times \mathbb{Z}^{n} \arrow[r] \arrow[d, two heads, "\lambda"'] & \mathbb{Z}^{n} \arrow[r] \arrow[d, "0"] & 0 \\
0 \arrow[r] & \cok(f) \arrow[r, "="] & \cok(f) \arrow[r] & 0 \arrow[r] & 0,
\end{tikzcd}
\] 

where $i_{1}$ denote the inclusion into the first factor. Since $\lambda$ is surjective, it follows from the Snake Lemma that $\cok(\theta)$ is finitely generated. As $\cok(f)$ is torsion, it follows that $\cok(\theta)$ is finite. Since $\theta$ is a continuous group homomorphism, its kernel $\ker(\theta)$ is a closed subgroup, and hence profinite. This implies that the image of $\theta$ is a profinite abelian group.

As $\cok(f)$ is torsion, the image of $\theta$ is a torsion profinite abelian group. Hence it is of finite exponent (see \cite[Lemma 4.3.7]{RZ}), which shows that $\cok(f)$ is torsion of finite exponent. This completes the proof.

\end{proof}

\begin{prop}\label{prop:torus2} Let $f:T(k)\to T(k)$ be a continuous endomorphism of a split torus over $k$. If $\cok(f)$ is finite, then so is $\ker(f)$. 
\end{prop}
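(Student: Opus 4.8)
The plan is to exploit the topological structure of $T(k)$ and reduce the statement to the elementary fact that an endomorphism of a finitely generated free module over $\Z$ (resp. over $\Z_p$) with finite cokernel has vanishing kernel, because it has nonzero determinant. Writing $T(k)\cong(k^\times)^m$ and recalling the topological isomorphism $k^\times\cong\Z\times\mathcal{O}_k^\times$ used in \autoref{prop:torus1}, the subgroup $K:=(\mathcal{O}_k^\times)^m$ is the unique maximal compact subgroup of $T(k)$, with $T(k)/K\cong\Z^m$. First I would observe that since $f$ is continuous and $K$ is compact, $f(K)$ is a compact subgroup, hence $f(K)\subseteq K$; thus $f$ restricts to a continuous endomorphism $f_0$ of $K$ and induces an endomorphism $\bar f$ on $\Z^m$, giving a morphism between two copies of the short exact sequence $0\to K\to T(k)\to\Z^m\to 0$.

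Applying the snake lemma to this diagram yields
\[0\to\ker(f_0)\to\ker(f)\to\ker(\bar f)\to\cok(f_0)\to\cok(f)\to\cok(\bar f)\to 0.\]
Since $\cok(\bar f)$ is a quotient of the finite group $\cok(f)$, it is finite; but a map $\bar f\colon\Z^m\to\Z^m$ with finite cokernel has nonzero determinant and is therefore injective, so $\ker(\bar f)=0$. This immediately gives $\ker(f)\cong\ker(f_0)$, and it shows that $\cok(f_0)$ injects into $\cok(f)$, so $\cok(f_0)$ is finite as well. The problem is thereby reduced to the compact torus $K$: I must show that a continuous endomorphism $f_0$ of $K$ with finite cokernel has finite kernel.

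This compact step is the main obstacle, since for a general profinite abelian group finiteness of the cokernel does not force finiteness of the kernel; the argument must use the explicit structure $\mathcal{O}_k^\times\cong\mu(k)\times\Z_p^{[k:\Q_p]}$, where $\mu(k)$ is the finite group of roots of unity in $k$. Thus $K_{\tor}$ is finite and $K/K_{\tor}\cong\Z_p^{N}$ with $N=m[k:\Q_p]$. As $f_0$ preserves $K_{\tor}$, it induces a continuous, hence $\Z_p$-linear, endomorphism $\tilde f$ of $\Z_p^{N}$. A second application of the snake lemma to $0\to K_{\tor}\to K\to\Z_p^{N}\to 0$ shows $\cok(\tilde f)$ is finite; over the principal ideal domain $\Z_p$ this again forces $\det(\tilde f)\neq 0$, so $\tilde f$ is injective and $\ker(\tilde f)=0$. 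The resulting exact sequence gives $\ker(f_0)\cong\ker\!\big(f_0|_{K_{\tor}}\big)$, a subgroup of the finite group $K_{\tor}$, hence finite. Combined with $\ker(f)\cong\ker(f_0)$, this completes the proof.
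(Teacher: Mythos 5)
Your proof is correct, and it uses the same two-stage decomposition as the paper ($T(k)\cong\Z^m\times(\mathcal{O}_k^\times)^m$, then $(\mathcal{O}_k^\times)^m\cong(\text{finite})\times\Z_p^N$), but the execution differs in two genuine ways. The paper reduces $\ker(f)$ to the compact part via the composite $\pi_2\circ f\circ i_2$ and the fact that a profinite group has no nontrivial continuous homomorphism to $\Z^m$; it must then argue separately that the relevant cokernel is finitely generated and profinite, hence finite, and it finishes on $\Z_p^N$ with profinite group theory from Ribes--Zalesskii: the image is a closed finite-index, hence open, subgroup, the structure theorem identifies it with $\Z_p^N$, and a continuous epimorphism of $\Z_p^N$ onto itself is an isomorphism. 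You instead package both reductions as morphisms of short exact sequences and run the snake lemma, which tracks kernels and cokernels simultaneously; this makes transparent a point the paper's phrasing leaves implicit, namely that identifying $\ker(f)$ with a subgroup of the compact part already uses the finite-cokernel hypothesis (through the injectivity of the induced map $\bar f$ on $\Z^m$, since e.g.\ $f=0$ shows the identification fails without it). Your final step is also more elementary: continuity forces $\Z_p$-linearity, and over the PID $\Z_p$ (just as over $\Z$) a square matrix with finite cokernel has nonzero determinant and is therefore injective, so no appeal to openness of finite-index subgroups, the structure theorem, or the hopfian property of $\Z_p^N$ is needed. What the paper's route buys is uniformity of toolkit — its profinite lemmas are of the same flavor as those used in \autoref{prop:torus1}; what yours buys is a self-contained linear-algebra argument and cleaner bookkeeping of where each hypothesis enters.
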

	\begin{proof}

We have an isomorphism of topological groups $T(k) = (k^{\times})^{r} \cong \mathbb{Z}^{r} \times (\mathcal{O}_{k}^{\times})^{r}$, where $r \in \mathbb{Z}_{\geq 0}$ and $\mathbb{Z}$ is discrete. Here, $\mathcal{O}_{k}^{\times}$ is a profinite abelian group, and we denote the topological group $(\mathcal{O}_{k}^{\times})^{r}$ by $G$. Consider the following diagram:

\[
\begin{tikzcd}
0 \arrow[r] & G \arrow[r, "i_{2}"] \arrow[d, "\tilde{f}"] & \mathbb{Z}^{r} \times G \arrow[r, "\pi_{1}"] \arrow[d, "f"] & \mathbb{Z}^{r} \arrow[r] \arrow[d, "f'"] & 0 \\
0 \arrow[r] & G \arrow[r, "i_{2}"] & \mathbb{Z}^{r} \times G \arrow[r, "\pi_{1}"] & \mathbb{Z}^{r} \arrow[r] & 0,
\end{tikzcd}
\]
where $i_{j}$ and $\pi_{j}$ denote the inclusion and projection maps for the $j$-th factor ($j = 1, 2$), respectively. Since $G$ is profinite and $\mathbb{Z}^r$ is discrete, $G$ admits no nontrivial continuous group homomorphisms into $\mathbb{Z}^r$. Thus, $\pi_{1} \circ f \circ i_{2} = 0$, which implies there exists a continuous group homomorphism $\tilde{f} \colon G \to G$ such that the first square commutes. Specifically, $\tilde{f}$ is defined by $\tilde{f} = \pi_2 \circ f \circ i_2$. This further induces a map $f'$ that makes the entire diagram commute.

By the Snake Lemma, we obtain the following long exact sequence:
\[
0 \longrightarrow \ker(\tilde{f}) \longrightarrow \ker(f) \longrightarrow \ker(f') \longrightarrow \cok(\tilde{f}) \longrightarrow \cok(f) \longrightarrow \cok(f') \longrightarrow 0.
\]

Since $\cok(f)$ is finite, it follows that $\cok(f')$ is also finite. Because $f'$ is an endomorphism of the free abelian group $\mathbb{Z}^r$, the finiteness of its cokernel implies that $f'$ is injective. This implies that $\ker(f') = 0$, and hence the above sequence yields an isomorphism $\ker(\tilde{f}) \cong \ker(f)$. Therefore, the original claim is equivalent to showing that $\ker(\tilde{f})$ is finite. Since $\cok(f)$ is finite, the above sequence also shows that $\cok(\tilde{f})$ is finite. Hence, we are reduced to showing the following claim.

\textbf{Claim}. Let $\theta : G \rightarrow G$ be a continuous group homomorphism with finite cokernel. Then $\ker(\theta)$ is finite.\\

\textbf{Proof}. As $G = (\mathcal{O}_{k}^{\times})^{r} \cong F \times (\mathbb{Z}_{p})^{m}$ (see \cite[Proposition II. 5.7]{Neu}) for some finite group $F$ and some integer $m$, consider the following composite map $\pi_{2} \circ \theta \circ i_{2}$:
\begin{center}
    $ \mathbb{Z}_{p}^{m} \xrightarrow{i_{2}} F \times \mathbb{Z}_{p}^{m}  \xrightarrow{\theta} F \times \mathbb{Z}_{p}^{m} \xrightarrow{\pi_{2}} \mathbb{Z}_{p}^{m}    $,
\end{center}
where $i_{2}$ (resp. $\pi_{2}$) is the inclusion (resp. projection) into the second factor. It is easy to see that the $\ker(\pi_{2} \circ \theta \circ i_{2})$ is finite if and only if $\ker(\theta)$ is finite. Moreover, since $\cok(\theta)$ is finite, it follows that $\cok(\pi_{2} \circ \theta \circ i_{2})$ is also finite. Hence, without loss of generality, we may assume $G \cong \mathbb{Z}_{p}^{m}$. In this case, we will show that $\ker(\theta) = 0$. Consider the image of the map $\theta$, say Im($\theta$), which is a closed subgroup of $G$ of finite index. It follows that Im($\theta$) is an open subgroup of $G$, and hence a topologically finitely generated profinite group (see \cite[Proposition 2.5.5]{RZ}). By the structure theorem (see \cite[Theorem 4.3.5]{RZ}), we have Im$(\theta) \cong \mathbb{Z}_{p}^{m'}$ for some integer $m' \leq m$. Since Im($\theta$) has finite index in $G$, it follows that $ m' = m$. We now have a continuous epimorphism $\theta : G = \mathbb{Z}_{p}^{m} \rightarrow \mathbb{Z}_{p}^{m} $. Such $\theta$ is an isomorphism (see \cite[Proposition 2.5.2]{RZ}), which implies the $\ker(\theta) = 0$. This completes the proof.

\end{proof}

\subsection{Structure of $SK_1(D)$ for snc schemes of dimension $1$}\label{SK1(D)}
In this subsection 
we extend the definition of $SK_{1}$ given in subsection \ref{k-theory-curves} to projective but not necessarily smooth curves. 
Before doing that we note that 
the group $SK_1(C)$ has also an interpretation as a Bloch's higher Chow group, as the next proposition shows. 
\begin{prop}\label{prop:classgroup-chow-motivic}
Let $C$ be a smooth projective curve over a $p$-adic field $k$. Then we have an isomorphism:
\begin{center}
    $SK_{1}(C) \cong \CH^{2}(C, 1) \cong H_{M}^{3}(C, \mathbb{Z}(2))$.
\end{center}
\end{prop}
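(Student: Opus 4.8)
The plan is to obtain the two isomorphisms separately. The second one, $\CH^2(C,1)\cong H^3_M(C,\Z(2))$, is immediate: it is exactly the general comparison \eqref{motivic-higher-chow-iso} specialized to the smooth curve $C$ with $i=3$ and $q=2$, since $2q-i=1$. The real content is therefore the first isomorphism, and I would prove it by identifying $H^3_M(C,\Z(2))$ directly with $SK_1(C)$; composing with the comparison above then yields $SK_1(C)\cong\CH^2(C,1)$.

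For this I would invoke the coniveau (Gersten) spectral sequence for motivic cohomology of the smooth curve $C$, which arises from the Gersten resolution of the motivic complex $\Z(2)$ and takes the form
\[E_1^{s,t}=\bigoplus_{x\in C^{(s)}}H^{t-s}_M(k(x),\Z(2-s))\Rightarrow H^{s+t}_M(C,\Z(2)),\]
with $d_1\colon E_1^{s,t}\to E_1^{s+1,t}$. Since $C$ is a curve, only the columns $s=0,1$ are nonzero. In total degree $3$ the two relevant terms are $E_1^{0,3}=H^3_M(k(C),\Z(2))$ and $E_1^{1,2}=\bigoplus_{x\in C_{(0)}}H^1_M(k(x),\Z(1))$. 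Using the standard field computations $H^n_M(F,\Z(q))=0$ for $n>q$, together with $H^1_M(F,\Z(1))\cong F^\times$ and $H^2_M(F,\Z(2))\cong K_2^M(F)\cong K_2(F)$ (Matsumoto and Nesterenko--Suslin--Totaro), the first term vanishes and the second becomes $\bigoplus_{x\in C_{(0)}}k(x)^\times$. The incoming differential is
\[d_1\colon H^2_M(k(C),\Z(2))=K_2(k(C))\longrightarrow \bigoplus_{x\in C_{(0)}}k(x)^\times,\]
and I would identify it with the sum of tame symbols $\bigoplus_x\partial_x$ appearing in the definition of $SK_1(C)$; the outgoing differential lands in the zero column $s=2$. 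Hence $E_2^{1,2}=\cok(\bigoplus_x\partial_x)=SK_1(C)$, and because the nonzero columns are only $s=0,1$, every higher differential touching $E^{1,2}$ has source or target out of range, so the relevant anti-diagonal degenerates at $E_2$. Together with $E_1^{0,3}=0$ this gives $H^3_M(C,\Z(2))\cong SK_1(C)$.

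The step I expect to be the main obstacle is the precise matching of the Gersten boundary map $d_1$ with the boundary maps $\partial_x$ used to define $SK_1(C)$, i.e. checking that the $d_1$ of the coniveau spectral sequence for $\Z(2)$ is literally the tame symbol $K_2(k(C))\to k(x)^\times$ rather than some sign- or twist-modified version of it; this is standard but must be cited carefully (e.g. via the identification, near the diagonal, of the Gersten complex of $\Z(2)$ with the Gersten complex of the Milnor $K$-sheaf $\mathcal{K}^M_2$, whose differential is by definition the tame symbol). Everything else---the vanishing $E_1^{0,3}=0$, the field-level computations, and the $E_2$-degeneration forced by having only two columns---is routine. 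As an alternative route that sidesteps setting up the spectral sequence, I could instead use the known comparison $H^{n+i}_M(X,\Z(n))\cong H^i_{\Zar}(X,\mathcal{K}^M_n)$ for smooth $X$, which with $n=2$ and $i=1$ gives $H^3_M(C,\Z(2))\cong H^1_{\Zar}(C,\mathcal{K}^M_2)$, and then read off $H^1_{\Zar}(C,\mathcal{K}^M_2)=SK_1(C)$ from the two-term Gersten resolution of $\mathcal{K}^M_2$ on the curve.
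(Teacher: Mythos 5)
Your proof is correct, but it runs in the opposite direction from the paper's. The paper proves the two displayed isomorphisms independently: the first, $SK_1(C)\cong \CH^2(C,1)$, is quoted directly from Landsburg \cite[Theorem 2.5]{Lan} (which identifies $\CH^2(X,1)\cong H^1_{Zar}(X,\mathcal{K}_2)$ for smooth $X$; on a curve the Gersten resolution of $\mathcal{K}_2$ then exhibits this $H^1$ as $\cok\bigl(K_2(k(C))\to\bigoplus_x k(x)^\times\bigr)=SK_1(C)$), and the second is the comparison \eqref{motivic-higher-chow-iso}, exactly as in your first step. You instead prove $SK_1(C)\cong H^3_M(C,\Z(2))$ directly, via the coniveau spectral sequence for motivic cohomology, and then deduce the first isomorphism by composing with \eqref{motivic-higher-chow-iso}. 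Both routes are sound, and in fact both ultimately rest on a Gersten-type resolution; the difference is where the technical weight sits. The paper's proof is a two-line citation, with all the Gersten/spectral-sequence work outsourced to Landsburg; yours is more self-contained but requires the existence of the motivic coniveau spectral sequence for smooth varieties and the identification of its $d_1$ with the tame symbol, which is the step you rightly flag as needing a careful reference. Note, though, that for your computation this identification can be demanded only up to sign or unit: since you only use the cokernel of $d_1\colon K_2(k(C))\to\bigoplus_x k(x)^\times$, and the cokernel depends only on the image, any standard comparison (e.g.\ through the Gersten complex of the Milnor $K$-sheaf) suffices, so this obstacle is milder than you suggest. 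Finally, your alternative route through $H^3_M(C,\Z(2))\cong H^1_{Zar}(C,\mathcal{K}^M_2)$ is essentially the paper's argument with the Milnor $K$-sheaf replacing the Quillen sheaf $\mathcal{K}_2$ (these agree in weight $2$ by Matsumoto's theorem), so it collapses the two approaches into one.
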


\begin{proof}
The first isomorphism in the claim follows from \cite[Theorem 2.5]{Lan}, and the second isomorphism follows from  (\ref{motivic-higher-chow-iso}) (see also \cite[Appendix]{AS}).

\end{proof}

We now let $D$ be a projective curve over $k$. We define
\begin{equation}
    SK_{1}(D) := H_{M}^{3}(D, \mathbb{Z}(2)).
\end{equation}
 \autoref{prop:classgroup-chow-motivic} shows that this definition coincides with the group defined in \S\ref{k-theory-curves} for smooth projective curves. The next result provides the structure of $SK_{1}(D)/SK_{1}(D)_{\dv}$ for projective curves, that are not necessarily smooth (also see \autoref{cor:SK1structure}).

\begin{prop}\label{prop:SK1} Let $X$ be a smooth projective surface over $k$ and $D$ a simple normal crossing divisor on $X$. Suppose that $D$ has $r\geq 1$ irreducible components, and each irreducible component has a $k$-rational point.Then we have the following.

\begin{enumerate}
    \item  The composite map $\varepsilon_{0} := N \circ \pi^{*} \colon SK_{1}(D) \to (k^{\times})^{r}$ is surjective, where $\pi^{*}:SK_{1}(D) \rightarrow SK_{1}(\tilde{D})$ is the map induced by the normalization $\pi \colon \tilde{D} \to D$, and $N \colon SK_{1}(\tilde{D}) \to (k^{\times})^{r}$ is the norm map.

\item The group $H:=SK_1(D)/SK_1(D)_{\dv}$ fits into a short exact sequence \[0\to F_2\to H\to F_1\oplus (k^\times)^r\to 0,\] where $F_1, F_2$ are finite groups. 
\end{enumerate}

\end{prop} 

\begin{proof}

 If $D$ is smooth, this follows from \autoref{cor:SK1structure}. Assume $r \geq 2$. Let $\{ D_{i} \}_{1 \leq i \leq r}$ be the set of irreducible components of $D$, where each $D_{i}$ is a smooth curve. Let $\pi \colon \tilde{D}:= \coprod_{i=1}^{r} D_{i}  \to D$ be the normalization of $D$. We then have the following blow-up square:
\[
\begin{tikzcd}
E \arrow[r, hookrightarrow, "\Tilde{i}"] \arrow[d, "\pi'"'] & \Tilde{D} \arrow[d, "\pi"] \\
D_{\sing} \arrow[r, hookrightarrow, "i"'] & D,
\end{tikzcd}
\]
where $D_{\text{sing}}$ is the singular locus of $D$, $E$ is the exceptional fiber of the map $\pi$, and $i$ and $\tilde{i}$ denote the canonical inclusions of closed subschemes $D_{\sing}$ and $E$ respectively. The above blow-up square induces the following commutative diagram:
\[
\begin{tikzcd}[column sep=small, row sep=small]
\scriptsize
H_{M}^{2}(D_{\sing}, \mathbb{Z}(2)) \arrow[r] \arrow[d, "\pi'^{\ast}"'] & 
H_{M,c}^{3}(D - D_{\sing}, \mathbb{Z}(2)) \arrow[r] \arrow[d, "\cong"'] & 
H_{M}^{3}(D, \mathbb{Z}(2)) \arrow[r] \arrow[d, "\pi^{\ast}"] & 
H_{M}^{3}(D_{\sing}, \mathbb{Z}(2)) \arrow[d, "\pi'^{\ast}"'] \\
H_{M}^{2}(E, \mathbb{Z}(2)) \arrow[r] & 
H_{M,c}^{3}(\Tilde{D} - E , \mathbb{Z}(2)) \arrow[r] & 
H_{M}^{3}(\Tilde{D} , \mathbb{Z}(2)) \arrow[r] & 
H_{M}^{3}(E, \mathbb{Z}(2))
\end{tikzcd}
\]
where the horizontal rows are exact by  \autoref{lem:cohomology Ex-seq}. The second vertical map is an isomorphism since $\Tilde{D} - E \cong D - D_{\sing}$. As the dimension of both $D_{\sing}$ and $E$ is zero, the two rightmost groups in both sequences vanish by a dimension argument. Infact, for a field $k$, we have $H_{M}^{3}(\Spec(k), \mathbb{Z}(2)) \cong \CH^{2}(\Spec(k), 1)  = 0$. Hence, we obtain a short exact sequence
\begin{equation}\label{classgroup-snc-exact-1}
    0 \rightarrow \Ker(\pi^{\ast}) \rightarrow H_{M}^{3}(D, \mathbb{Z}(2)) \xrightarrow{\pi^{\ast}} H_{M}^{3}(\Tilde{D} , \mathbb{Z}(2)) \rightarrow 0.
\end{equation}
Furthermore, by five lemma, we have $H_{M}^{2}(E, \mathbb{Z}(2)) \twoheadrightarrow \Ker(\pi^{\ast})$.  Note that we have the following isomorphism.
\begin{equation}
H_{M}^{3}(\Tilde{D}, \mathbb{Z}(2)) \cong \bigoplus_{i=1}^{r} H_{M}^{3}(D_{i}, \mathbb{Z}(2)) \cong \bigoplus_{i=1}^{r} SK_{1}(D_{i}).
\end{equation}

\noindent \textbf{Proof of (1):} The map $\pi^*$ is just the pullback 
\begin{equation}
    SK_{1}(D) \cong H_{M}^{3}(D, \mathbb{Z}(2)) \xrightarrow{\pi^{*}} H_{M}^{3}(\tilde{D}, \mathbb{Z}(2)) \cong SK_{1}(\tilde{D}) \cong \bigoplus_{i=1}^{r} SK_{1}(D_{i}).
\end{equation}
For each $i$, let $N_{i} \colon SK_{1}(D_{i}) \to k^{\times}$ denote the norm map for the smooth curve $D_{i}$. Since $D_{i}(k) \neq \emptyset$, the map $N_{i}$ is surjective for each $i$ (see \ref{sec:SK1}). Defining $N = \bigoplus_{i=1}^{r} N_{i}$ yields the norm map
\[
SK_{1}(\tilde{D}) \cong \bigoplus_{i=1}^{r} SK_{1}(D_{i}) \xrightarrow{N} (k^{\times})^{r},
\]
which is also surjective. Consequently, the composite map
\[
\varepsilon_0 := N \circ \pi^{*} \colon SK_{1}(D) \xrightarrow{\pi^{*}} SK_{1}(\tilde{D}) \xrightarrow{N} (k^{\times})^{r}
\]
is surjective, which completes the proof of (1).

\noindent \textbf{Proof of (2):}  Note that $E = \coprod \Spec(k_{j})$ for $1 \leq j \leq n$, where each $k_{j}$ is a $p$-adic field, we have:
\[
H_{M}^{2}(E, \mathbb{Z}(2)) \cong \bigoplus_{j = 1}^{n} H_{M}^{2}(\Spec(k_{j}), \mathbb{Z}(2)) \cong \bigoplus_{j = 1}^{n} \CH^{2}(\Spec(k_{j}), 2).
\]
For each $j$, we have $\CH^{2}(\Spec(k_{j}), 2) \cong^{1} K^{M}_{2}(k_{j}) \cong^{2} F_{j} \oplus D_{j}$, where $F_{j}$ is a finite group and $D_{j}$ is a divisible group. Here the isomorphism $\cong^{1}$ follows from \cite[Theorem 5.1]{MVW}, and $\cong^{2}$ follows from \cite[Chapter VI Proposition 7.1]{Wei}.  Therefore by \eqref{classgroup-snc-exact-1}, we have the following short exact sequence:
\begin{equation}
    0 \rightarrow \Ker(\pi^{\ast}) \rightarrow H_{M}^{3}(D, \mathbb{Z}(2)) \xrightarrow{\pi^{\ast}} H_{M}^{3}(\Tilde{D} , \mathbb{Z}(2)) \rightarrow 0 
\end{equation}
where $\Ker(\pi^{\ast}) \cong F' \oplus D'$, for some finite group $F'$ and a divisible group $D'$. In addition, there are isomorphisms
\[
H_{M}^{3}(\Tilde{D}, \mathbb{Z}(2)) \cong \bigoplus_{i=1}^{r} H_{M}^{3}(D_{i}, \mathbb{Z}(2)) \cong \bigoplus_{i=1}^{r} SK_{1}(D_{i}) \cong \Big (\bigoplus_{i=1}^{r}SK_1(D_{i})_{\dv} \Big)\bigoplus F \bigoplus (k^{\times})^{r},
\]
where the last isomorphism follows from \autoref{cor:SK1structure} for some finite group $F$. Therefore, we obtain a short exact sequence:
\begin{equation}\label{pullback-sk1(D)}
0 \rightarrow F' \oplus D' \rightarrow SK_{1}(D) \xrightarrow{\pi^{\ast}} \Big (\bigoplus_{i=1}^{r}SK_1(D_{i})_{\dv} \Big)\bigoplus F \bigoplus (k^{\times})^{r}\rightarrow 0,
\end{equation}
where $F$ and $F'$ are finite groups and $D'$ is divisible. The desired claim can now be easily deduced from the equation (\ref{pullback-sk1(D)}) (see \autoref{lem:F+D}). This completes the proof.

\end{proof}

\subsection{Decomposable $(2,1)$-cycles} 
In the previous subsection we saw that for a smooth projective curve $C$ over $k$,  $SK_1(C)\simeq\CH^2(C,1)$. More generally, if $X$ is a smooth projective variety over $k$ there is an isomorphism 
\[\CH^2(X,1)\simeq H^1_{Zar}(X,\mathcal{K}_2),\] (\cite[Theorem 2.5]{Lan}) where $\mathcal{K}_2$ is the Zariski sheaf associated to the presheaf, $U\mapsto K_2(U)$, the second algebraic $K$-group of $U$.  The product structure of $K$-theory gives a homomorphism
\[\psi:\Pic(X)\otimes k^\times\to H^1_{Zar}(X,\mathcal{K}_2).\] 
It is standard to call an element of $\CH^2(X,1)$ \textit{decomposable}, if it lies in the image of the map
\[\bigoplus_{L/k \text{ finite}}\Pic(X_L)\otimes L^\times\xrightarrow{\bigoplus \psi_L} \bigoplus\CH^2(X_L,1)\xrightarrow{\sum_{L/k}N_{L/k}}\CH^2(X,1).\] 
When $X$ is a curve, every $(2,1)$-cycle is decomposable (see the proof of \autoref{prop:secsoffibration}). For a surface $X$ with $p_g(X)>0$ this is generally not expected to be true, but constructing indecomposable cycles is a rather nontrivial task. We refer to \cite{Mil,Sp} for some examples. Fortunately, the decomposable $(2,1)$-cycles will be enough for our purposes. 

\vspace{3pt}
 
\section{Structure of Suslin's Homology for surfaces over $p$-adic fields}\label{mainproofs}
In this section we prove \autoref{thm:main2} and \autoref{maincor}, which relate \autoref{mainconj} for smooth projective surfaces to its analogue for smooth quasi-projective surfaces (see \autoref{ourconj}).  We keep assuming we are in the set-up of Assumption \ref{Assumption}. We begin this section by establishing certain results on Suslin homology which will play a key role in the proof of the theorem.

\begin{lem}\label{lem:Sus-hom-surjection}
Let $X$ be a smooth projective surface over $k$, and let 
$u \colon U \inj X$ be a dense open immersion. Then the natural map 
$H^{\mathrm{\Sus}}_{0}(U) \rightarrow \CH_{0}(X)$ is surjective. Moreover, if the complement has dimension $0$, it is an isomorphism.
\end{lem}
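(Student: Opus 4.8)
The plan is to translate both sides into motivic cohomology and then read the statement off the localization sequence of \autoref{lem:cohomology Ex-seq}. Write $d=\dim X=2$. By \autoref{thm:compactly-support-suslin} we have $H_0^{\Sus}(U)\cong H^4_{M,c}(U,\Z(2))$, and since $X$ is a smooth projective surface the identification \eqref{motivic-higher-chow-iso} gives $\CH_0(X)=\CH^2(X)=\CH^2(X,0)\cong H^4_M(X,\Z(2))$. Under these isomorphisms the natural map $H_0^{\Sus}(U)\to\CH_0(X)$ is precisely the pushforward $u_*$ in the compactly supported theory attached to the open immersion $u$; on cycles it carries a finite sum of closed points of $U$ to the same sum regarded on $X$. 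Thus it suffices to analyze $u_*$.

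\emph{Surjectivity.} Give $W:=X-U$ its reduced structure, with $\iota\colon W\inj X$; density of $u$ forces $\dim W\le 1$. Taking $i=4$, $j=2$ in \autoref{lem:cohomology Ex-seq} yields the exact sequence
\[
H^3_M(W,\Z(2))\xrightarrow{\ \partial\ }H^4_{M,c}(U,\Z(2))\xrightarrow{\ u_*\ }H^4_M(X,\Z(2))\xrightarrow{\ \iota^*\ }H^4_M(W,\Z(2)).
\]
Surjectivity of $u_*$ then follows from the vanishing $H^4_M(W,\Z(2))=0$, which is a dimension statement: for any finite type $k$-scheme $W$ one has $H^i_M(W,\Z(j))=0$ whenever $i>j+\dim W$, and here $4>2+1\ge 2+\dim W$. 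Concretely, exactly as in the proof of \autoref{prop:SK1}, one may pass to the normalization of the one-dimensional part of $W$, where $H^4_M(C,\Z(2))=\CH^2(C)=0$ for a smooth projective curve $C$, and combine this with the vanishing on the zero-dimensional pieces $\coprod\Spec(k_j)$. This proves the first assertion.

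\emph{Isomorphism when $\dim W=0$.} In this case $W\cong\coprod_j\Spec(k_j)$ with each $k_j/k$ finite, so
\[
H^3_M(W,\Z(2))\cong\bigoplus_j H^3_M(\Spec k_j,\Z(2))\cong\bigoplus_j\CH^2(\Spec k_j,1)=0,
\]
the vanishing holding because $H^i_M(\Spec k_j,\Z(2))=0$ for every $i>2$. Hence $\partial=0$ in the sequence above and $u_*$ is injective; together with the surjectivity already established (which applies since $\dim W=0\le 1$) this shows $u_*$ is an isomorphism.

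\emph{Main obstacle.} The two dimension vanishings $H^4_M(W,\Z(2))=0$ and $H^3_M(W,\Z(2))=0$ are where the hypotheses $\dim W\le1$ and $\dim W=0$ enter, and they drive the two parts respectively; neither is hard. The only genuinely delicate point I anticipate is bookkeeping rather than geometry: one must verify that the abstract pushforward $u_*$ produced by the six-functor formalism underlying \autoref{lem:cohomology Ex-seq} coincides with the elementary map induced on $0$-cycles by the inclusion $U_{(0)}\subset X_{(0)}$, i.e. that the identifications of \autoref{thm:compactly-support-suslin} and \eqref{motivic-higher-chow-iso} are compatible with covariant functoriality along open immersions. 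Granting this compatibility, both statements fall out of the localization sequence.
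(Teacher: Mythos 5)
Your proof is correct and follows essentially the same route as the paper: both sides are translated into motivic cohomology via \autoref{thm:compactly-support-suslin} and \eqref{motivic-higher-chow-iso}, and both statements are read off the localization sequence of \autoref{lem:cohomology Ex-seq}, reducing surjectivity to the vanishing of $H^4_M(W,\Z(2))$ and injectivity (when $\dim W=0$) to the vanishing of $H^3_M(W,\Z(2))$; your treatment of the zero-dimensional case is identical to the paper's, and the compatibility of the abstract pushforward with the cycle-level map, which you flag, is taken for granted in the paper as well. The one place where you diverge is the vanishing $H^4_M(W,\Z(2))=0$ when $W$ contains a singular curve. You invoke the general bound $H^i_M(W,\Z(j))=0$ for $i>j+\dim W$ for arbitrary finite type $k$-schemes; for the cdh-theory used here on singular schemes this is true in characteristic $0$, but it is not a formality (it needs the identification of the $\Hom$-groups in $\mathbf{DM}_{cdh}$ with cdh hypercohomology together with the Suslin--Voevodsky bound on cdh-cohomological dimension), and the paper instead avoids it: it applies the localization sequence to $D_{\sing}\subset D$, kills $H^4_M(D_{\sing},\Z(2))$ by dimension, and kills the compactly supported group of the smooth locus by duality, $H^4_{M,c}(D_{\sm},\Z(2))\cong H^M_{-2}(D_{\sm},\Z(-1))=0$, citing \cite[Remark 2.2, Lemma 2.1(2)]{Yam1}. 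Your fallback argument via the normalization and the conductor square, as in the proof of \autoref{prop:SK1}, is valid (the exceptional fiber $E$ is zero-dimensional, so $H^3_M(E,\Z(2))=0$, and $H^4_M(\widetilde{D},\Z(2))=\CH^2(\widetilde{D})=0$ for the smooth projective normalization), and it closes this gap without appealing to the general claim; so either variant of your argument is complete.
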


\begin{proof}
Let $D := X - U$ be the closed subscheme of dimension at most one. By \autoref{lem:cohomology Ex-seq}, we have an exact sequence:
\[
H_{M}^{3}(D, \mathbb{Z}(2)) \rightarrow H_{M,c}^{4}(U, \mathbb{Z}(2)) \xrightarrow{u_{\ast}} H_{M}^{4}(X, \mathbb{Z}(2)) \rightarrow H_{M}^{4}(D, \mathbb{Z}(2)).
\]
Using the isomorphisms (\ref{motivic-higher-chow-iso}) and (\ref{compact-motivic-Sus-iso}), it is enough to show that the group $H_{M}^{4}(D, \mathbb{Z}(2))$ vanishes. Since motivic cohomology is nil-invariant, we may assume that $D$ is reduced. If $D$ is smooth, then 
\[
H_{M}^{4}(D, \mathbb{Z}(2)) \cong \CH^{2}(D, 0) \cong \CH^2(D) = 0,
\]
where the last equality holds as  $\dim(D) \leq 1$. If $\dim(D) = 0$, then $D$ is smooth and we are done. Moreover, $H_{M}^{3}(D, \mathbb{Z}(2)) \cong \CH^{2}(D, 1) = 0$, where the latter group vanishes again by a dimension argument. This proves that the map
$H^{\mathrm{\Sus}}_{0}(U) \rightarrow \CH_{0}(X)$ is an isomorphism if $\dim(D) = 0$. 

From now on assume that $\dim(D) = 1$. Let $D_{\sing}$ (resp. $D_{\sm}$) denote the singular locus (resp. smooth locus) of $D$. By \autoref{lem:cohomology Ex-seq}, we have an exact sequence:
\[
\cdots \to H^4_{M,c}(D_{\sm}, \mathbb{Z}(2)) \rightarrow H^4_M(D, \mathbb{Z}(2)) \rightarrow H^4_{M}(D_{\sing}, \mathbb{Z}(2)) \to \cdots
\]
Since $\dim(D_{\sing}) = 0$, the previous discussion implies that $H^4_{M}(D_{\sing}, \mathbb{Z}(2)) = 0$.

Also, by \cite[Remark 2.2]{Yam1}, we have 
\[
H^4_{M,c}(D_{\sm}, \mathbb{Z}(2)) \cong H^{M}_{-2}(D_{\sm}, \mathbb{Z}(-1)),
\]
and the latter group vanishes by \cite[Lemma 2.1 (2)]{Yam1}. This in particular shows that the group $H_{M}^{4}(D, \mathbb{Z}(2))$ vanishes, which completes the proof.

\end{proof}

The next result shows that the subgroup of $n$-torsion elements in the Suslin homology of a quasi-projective surface, $H^{\Sus}_{0}(U)_{n}$, is finite. An analogous mod-$n$ finiteness result for $H^{\Sus}_{0}(U)/n$ was shown in \cite[Theorem 1.5]{BK}. 

\begin{prop}\label{finiteness-n-torsion-sus} 
Let $X$ be a smooth projective surface over $k$, and  
$u \colon U \inj X$  a dense open immersion. Then for any integer $n\geq 1$, the group $H^{\Sus}_{0}(U)_{n}$ is finite.
\end{prop}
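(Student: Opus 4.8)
The plan is to reduce everything to finite coefficients, where the relevant motivic cohomology groups become finite by comparison with \'etale cohomology. (One could instead feed the localization sequence of \autoref{lem:cohomology Ex-seq} with $\Z$-coefficients into a short exact sequence $0 \to \coker(\iota^\star) \to H^{\Sus}_0(U) \to \CH_0(X) \to 0$ and try to bound the $n$-torsion of each end; but while $\CH_0(X)_n$ is finite by Colliot-Th\'el\`ene, the $n$-torsion of the quotient $\coker(\iota^\star)$ of $SK_1(D)$ is awkward to control directly, so I prefer the following route.)

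Write $D := X - U$, of dimension $\le 1$, and recall $H^{\Sus}_0(U) \cong H^4_{M,c}(U, \Z(2))$ from \autoref{thm:compactly-support-suslin}. First I would apply the Bockstein sequence attached to $0 \to \Z \xrightarrow{n} \Z \to \Z/n \to 0$ in compactly supported motivic cohomology; exactness at $H^4_{M,c}(U,\Z(2))$ shows that the connecting map out of $H^3_{M,c}(U,\Z/n(2))$ has image exactly $H^{\Sus}_0(U)_n$, so that $H^{\Sus}_0(U)_n$ is a quotient of $H^3_{M,c}(U, \Z/n(2))$. Thus it suffices to prove that $H^3_{M,c}(U, \Z/n(2))$ is finite. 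Next, the localization triangle underlying \autoref{lem:cohomology Ex-seq} exists with $\Z/n$-coefficients as well, and since $X$ and $D$ are projective (so that compactly supported and ordinary motivic cohomology agree on them) it supplies an exact sequence
\[
H^2_M(D, \Z/n(2)) \xrightarrow{\partial} H^3_{M,c}(U, \Z/n(2)) \xrightarrow{u_\ast} H^3_M(X, \Z/n(2)).
\]
Hence $H^3_{M,c}(U, \Z/n(2))$ is an extension of a subgroup of $H^3_M(X, \Z/n(2))$ by a quotient of $H^2_M(D, \Z/n(2))$, and I am reduced to the finiteness of these two groups.

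For the smooth projective surface $X$ the Beilinson--Lichtenbaum (norm residue) comparison gives an injection $H^3_M(X, \Z/n(2)) \hookrightarrow H^3_{\et}(X, \mu_n^{\otimes 2})$ --- the degree being the weight plus one --- and the target is finite, being \'etale cohomology of a $k$-variety with finite coefficients; so $H^3_M(X,\Z/n(2))$ is finite. The one genuinely non-formal point, and the main obstacle, is the finiteness of $H^2_M(D, \Z/n(2))$ when $D$ is singular. I would dispose of it exactly as in the proof of \autoref{prop:SK1}: the conductor square of the normalization $\pi\colon \tilde D \to D$ yields, with $\Z/n$-coefficients, exact sequences relating $H^*_M(D, \Z/n(2))$ to $H^*_M(\tilde D, \Z/n(2))$ and to the contributions of the zero-dimensional schemes $D_{\sing}$ and $E$. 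Each component $\tilde D_i$ is a smooth curve, where Beilinson--Lichtenbaum gives $H^2_M(\tilde D_i, \Z/n(2)) \cong H^2_{\et}(\tilde D_i, \mu_n^{\otimes 2})$, finite, while $D_{\sing}$ and $E$ are finite disjoint unions of spectra of $p$-adic fields $k_j$, whose motivic cohomology in these degrees is the (finite) Galois cohomology $H^i(k_j, \mu_n^{\otimes 2})$. Therefore $H^2_M(D, \Z/n(2))$ is finite, and with it $H^3_{M,c}(U, \Z/n(2))$ and $H^{\Sus}_0(U)_n$, as desired.
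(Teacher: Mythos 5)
Your proof is correct, and it shares its first half with the paper's argument: both pass from $H^{\Sus}_0(U)\cong H^4_{M,c}(U,\Z(2))$ through the Bockstein/universal-coefficient sequence to reduce the claim to finiteness of $H^3_{M,c}(U,\Z/n(2))$. Where you diverge is in how that finiteness is obtained. The paper treats $U$ directly, citing \cite[Corollary 10.3]{GKR} for an injection $H^3_{M,c}(U,\Z/n(2))\hookrightarrow H^3_{\et,c}(U,\Z/n(2))$ and \cite[Theorem 9.9]{GKR} for the finiteness of compactly supported \'etale cohomology over a $p$-adic field; this is a two-line conclusion but leans on the comparison machinery of that paper as a black box. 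You instead feed the mod-$n$ localization sequence of \autoref{lem:cohomology Ex-seq} into the problem, reducing to $H^3_M(X,\Z/n(2))$ and $H^2_M(D,\Z/n(2))$, handle the smooth projective pieces ($X$, the normalization $\tilde D$, and the zero-dimensional schemes $D_{\sing}$, $E$) by Beilinson--Lichtenbaum plus finiteness of \'etale/Galois cohomology over $p$-adic fields, and dispose of the singular boundary curve by the same conductor-square d\'evissage the paper uses for $SK_1(D)$ in \autoref{prop:SK1}. Your route is longer but more self-contained: it uses only the norm residue theorem for smooth schemes and tools already present in the paper, and it makes visible exactly which geometric pieces contribute; the paper's route is shorter and applies uniformly to $U$ without any decomposition of the boundary. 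Both arguments are sound, and the mod-$n$ localization sequence you invoke is legitimate, since the underlying cdh localization triangle is coefficient-independent and ordinary and compactly supported motivic cohomology agree on the projective schemes $X$ and $D$.
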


\begin{proof}
By \autoref{thm:compactly-support-suslin}, the claim is equivalent to showing that for every $n\geq 1$ the group $H^4_{M,c}(U, \mathbb{Z}(2))_{n}$ is finite. We now use the short exact sequence,

\begin{center}
    $0 \rightarrow H^3_{M,c}(U, \mathbb{Z}(2))/n \rightarrow H^3_{M,c}(U, \mathbb{Z}/n(2)) \rightarrow H^4_{M,c}(U, \mathbb{Z}(2))_{n} \rightarrow 0$
\end{center}
(see \cite[(2.1.1)]{Yam1}). It is enough to show that the middle group is finite. By \cite[Corollary 10.3]{GKR} (also  \cite[Proposition 3.2]{Yam1}), the group $H^3_{M,c}(U, \mathbb{Z}/n(2))$ injects into $H^3_{\text{ét},c}(U, \mathbb{Z}/n(2))$ where the latter group is compactly supported \'{e}tale cohomology, which is finite by \cite[Lemma 2.9]{Sai-Duality}. This completes the proof.

\end{proof}

\begin{lem}\label{lem:maindiagram} Let $X$ be a smooth projective surface over $k$ and $j:U\hookrightarrow X$ an open immersion whose complement has dimension one. Then there is a commutative diagram with exact rows and columns, 
    \[
\begin{tikzcd}
&     & \CH^2(X,1) \arrow[r] \arrow[d, "g"] & 0 \arrow[d] \\
&     & SK_1(X-U) \arrow[r, "\varepsilon"] \arrow[d, "f"] & T(k) \arrow[d, "\delta"] \\
	0 \arrow[r] & F^{2}(U) \arrow[r] \arrow[d, "\alpha"] & F^{1}(U) \arrow[r, "\alb_U"] \arrow[d, "\beta"] & Alb_{U}(k) \arrow[d, "\gamma"] \\
	0 \arrow[r] & F^{2}(X) \arrow[r]                 & F^1(X)  \arrow[r, "\alb_X"]    \arrow[d]             & Alb_{X}(k) \\
	 &                  & 0.   & 
\end{tikzcd}
\] 
Moreover, if the torus $T$ is split, then the map $\gamma$ is surjective. 
\end{lem}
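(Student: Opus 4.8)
The plan is to produce the whole diagram from a single localization long exact sequence in motivic cohomology, and then to extract the degree-zero rows and the toric column by elementary diagram chasing, feeding in the semi-abelian structure of $Alb_U$ at the very end. Write $D := X-U$, a reduced curve since $\dim(X-U)=1$. First I would apply \autoref{lem:cohomology Ex-seq} with closed subscheme $W=D$ and weight $j=2$; in the relevant range this reads
\[
\CH^2(X,1) \xrightarrow{\;g\;} SK_1(D) \xrightarrow{\;\partial\;} H_0^{\Sus}(U) \xrightarrow{\;u_*\;} \CH_0(X) \to 0,
\]
where $g=\iota^*$ is restriction to $D$. Here I use the identifications $H^3_M(X,\Z(2))\cong\CH^2(X,1)$ and $H^4_M(X,\Z(2))\cong\CH^2(X,0)=\CH_0(X)$ from \eqref{motivic-higher-chow-iso}, the definition $SK_1(D)=H^3_M(D,\Z(2))$ from \S\ref{SK1(D)}, the isomorphism $H^4_{M,c}(U,\Z(2))\cong H_0^{\Sus}(U)$ of \autoref{thm:compactly-support-suslin}, and the vanishing $H^4_M(D,\Z(2))=0$ established in the proof of \autoref{lem:Sus-hom-surjection}. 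This displayed sequence is the backbone of the entire third column.

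Next I would descend to degree zero. The degree maps on $H_0^{\Sus}(U)$ and $\CH_0(X)$ preserve residue fields, so they satisfy $\deg_U=\deg_X\circ u_*$; since $u_*\circ\partial=0$ by exactness, the image of $\partial$ lies in $F^1(U)=\ker(\deg_U)$, and $\partial$ factors as $SK_1(D)\xrightarrow{f}F^1(U)\hookrightarrow H_0^{\Sus}(U)$. Restricting $u_*$ to degree-zero cycles gives $\beta\colon F^1(U)\to F^1(X)$; it is surjective, because any degree-zero class on $X$ lifts along the surjection $u_*$ of \autoref{lem:Sus-hom-surjection} and the lift automatically has degree zero, while $\ker\beta=\ker u_*=\img\partial=\img f$ and $\ker f=\ker\partial=\img g$. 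This yields the exact column $\CH^2(X,1)\xrightarrow{g}SK_1(D)\xrightarrow{f}F^1(U)\xrightarrow{\beta}F^1(X)\to 0$. The map $\alpha$ is then defined as the restriction of $\beta$ to $F^2(U)$: for $x\in F^2(U)=\ker(\alb_U)$ one has $\alb_X(\beta(x))=\gamma(\alb_U(x))=0$, so $\beta(x)\in F^2(X)$, once the right-hand square is known to commute. The two bottom rows $0\to F^2(\,\cdot\,)\to F^1(\,\cdot\,)\xrightarrow{\alb}Alb(k)$ are exact by the very definitions of $F^2(U)$ and $F^2(X)$.

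For the last column I would take $k$-points of the defining extension $0\to T\to Alb_U\to Alb_X\to 0$, obtaining the left-exact sequence $0\to T(k)\xrightarrow{\delta}Alb_U(k)\xrightarrow{\gamma}Alb_X(k)$, which gives exactness at $T(k)$ and at $Alb_U(k)$. The obstruction to surjectivity of $\gamma$ sits in $H^1(k,T)$, and when $T$ is split this vanishes by Hilbert's Theorem~90; this proves the ``moreover'' clause. The functoriality of Serre--Russell's generalized Albanese — the universal morphism $U\to Alb_U$ composed with $Alb_U\to Alb_X$ agrees with the restriction to $U$ of $X\to Alb_X$ — yields the commuting square $\gamma\circ\alb_U=\alb_X\circ\beta$. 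Consequently $\gamma\circ\alb_U\circ f=\alb_X\circ\beta\circ f=0$ since $\beta\circ f=0$ by the third-column exactness, so $\alb_U\circ f$ takes values in $\ker\gamma=\img\delta\cong T(k)$; this defines $\varepsilon\colon SK_1(D)\to T(k)$ with $\delta\circ\varepsilon=\alb_U\circ f$. The remaining squares then commute: the middle-right one by construction of $\varepsilon$, the top-right one because $\delta(\varepsilon(g(\cdot)))=\alb_U(f(g(\cdot)))=0$ and $\delta$ is injective, and the lower ones by the definitions of $\alpha$ and $\beta$.

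The hard part will be the construction of $\varepsilon$, which is the only genuinely new arrow in the diagram. Everything rests on two points that deserve care: the correct identification of the motivic cohomology terms in the backbone sequence, in particular matching $H^3_M(D,\Z(2))$ with $SK_1(D)$ for the possibly singular curve $D$ (which is exactly the content of the definition adopted in \S\ref{SK1(D)}), and the functoriality/universal property of the generalized Albanese that supplies the commuting square $\gamma\circ\alb_U=\alb_X\circ\beta$. I expect the latter to require the most attention, since it is what forces $\alb_U\circ f$ to land in the toric part $T(k)$ and thereby makes the factorization through $\delta$ legitimate; once that square is in hand, the rest is formal diagram chasing.
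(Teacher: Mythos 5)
Your proposal is correct and follows essentially the same route as the paper: the localization sequence of \autoref{lem:cohomology Ex-seq} combined with the identifications \eqref{motivic-higher-chow-iso}, \autoref{thm:compactly-support-suslin}, the vanishing of $H^4_M(D,\Z(2))$ from \autoref{lem:Sus-hom-surjection}, the observation that $\ker(F^1(U)\to F^1(X))=\ker(H_0^{\Sus}(U)\to \CH_0(X))$, and Hilbert 90 for the surjectivity of $\gamma$ when $T$ is split. Your explicit construction of $\varepsilon$ via the factorization of $\alb_U\circ f$ through $\ker\gamma=\img\delta$ only fills in a detail the paper leaves implicit, and it is the intended one.
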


\begin{proof}
The third and fourth rows as well as the last column are exact by definition. The exactness of the middle column  follows by \autoref{lem:cohomology Ex-seq}, \autoref{thm:compactly-support-suslin}, \autoref{lem:Sus-hom-surjection}, and the fact  $\ker(F^{1}(U) \rightarrow F^{1}(X)) = \ker(H_{0}^{sus}(U) \rightarrow \CH_{0}(X))$.

Lastly, suppose that the torus $T$ is split. Then we claim that the sequence $0\to T(k)\to Alb_U(k)\to Alb_X(k)\to 0$ is exact on the right. This follows from the long exact sequence in Galois cohomology,
$0\to T(k)\to Alb_U(k)\to Alb_X(k)\to H^1(k, T(\overline{k}))$.  Since we assumed that the torus $T$ is split, we may write $T\simeq\G_m^s$ for some integer $s\geq 0$, and hence $H^1(k, T(\overline{k}))=0$ by Hilbert Theorem 90. 
    
\end{proof}

\begin{rem}\label{rem:epsilonmap} Suppose that $X\setminus U=|D|$, where $D$ is a simple normal crossing divisor satisfying the assumptions of \autoref{prop:SK1}. In this case we claim that
the map $\varepsilon$ appearing in the diagram of  \autoref{lem:maindiagram} is the same as $j^{\ast}(k) \circ \varepsilon_{0}$, where $\varepsilon_{0}: SK_{1}(D) \rightarrow (k^{\times})^r$ is defined in Proposition \ref{prop:SK1}, and $j^{\ast}(k): (k^{\times})^{r} \rightarrow (k^{\times})^{r-n}$ is the map of tori obtained as the dual of  the inclusion of character groups $J \hookrightarrow \mathbb{Z}^{r}$ (see subsection \ref{genalbsection}). Indeed, by Serre's construction of $Alb_{U}$ \cite{Se}, the map $\varepsilon$ is defined as follows. Following the notation in loc.cit., we denote by $E = X \times_{\text{Alb}_{X}} \text{Alb}_{U}$, the principal fiber space with base $X$ and group $T$, and we denote by $E_{j}$, the principal fiber space with base space $X$ and group $\mathbb{G}_{m}$ associated to a character $j \in J = \text{Hom}(T, \mathbb{G}_{m})$. Let $h_{j}$ denote a rational section of $E_{j}$ such that $\text{div}(h_{j}) = [j]$, where $[j]:=\varphi(j)$ is the class of $j$ in $\text{Pic}^0(X)$ (see \autoref{sec:background} for the definition of $\varphi$). Let  $\alpha \in SK_{1}(D)$, we write $f(\alpha) = \sum n_{i}[u_{i}] \in H^{\text{sus}}_{0}(U)$, and then
\[
\varepsilon(\alpha) = \text{alb}_{U}(f(\alpha)) := \left( \prod_{i} \text{Norm} (h_{j}(u_{i})^{n_{i}}) \right)_{j \in S} \in (k^{\times})^{r-n},
\]
where $S$ denotes a canonical generating set $S=\{j_1,\ldots, j_{r-n}\}$ of $J$. By the Weil reciprocity law for $K_{2}$, it follows that $\varepsilon (\alpha) = j^{\ast}(k) \circ \text{Norm} \circ \pi^{\ast} (\alpha) = j^{\ast}(k) \circ \varepsilon_{0} (\alpha)$.
\end{rem}

\begin{theo}\label{thm:main2} Let $X$ be a smooth projective surface over $k$ and $U$ a nonempty open subvariety. Assume that the complement $D=X-U$ is either $0$-dimensional, or the support of a simple normal crossing divisor. Then the group $F^2(X_L)/F^2(X_L)_{\dv}$ is finite for every finite extension $L/k$ if and only if $F^2(U_{L})/F^2(U_{L})_{\dv}$ is finite for every finite extension $L/k$. 
 \end{theo}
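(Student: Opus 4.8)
First, if $\dim D=0$ then \autoref{lem:Sus-hom-surjection} gives $H_0^{\Sus}(U)\cong\CH_0(X)$ and (the torus being $0$) $Alb_U=Alb_X$, so $F^2(U)=F^2(X)$ and there is nothing to prove; hence I assume $D$ is a simple normal crossing divisor. After a preliminary finite base change, together with a norm/transfer argument exploiting that the statement is quantified over \emph{all} $L$, I may assume the standing hypotheses of \autoref{NSXstuff} and \autoref{prop:SK1}: $\NS(X_{\ol{k}})$ has trivial Galois action, the torus $T$ is split, and each component $D_i$ carries a $k$-rational point. Next I would record, for each of $A=F^2(X_L)$ and $A=F^2(U_L)$, that finiteness of $A/A_{\dv}$ is \emph{equivalent} to $A$ being torsion of finite exponent by divisible (call such a group \emph{t.f.d.}): one direction is trivial, and the converse follows since the $n$-torsion of $\CH_0(X_L)$ (Colliot-Th\'el\`ene) and of $H_0^{\Sus}(U_L)$ (\autoref{finiteness-n-torsion-sus}) is finite for every $n$, so a bounded torsion quotient with finite $n$-torsion is finite. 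It therefore suffices to prove that $F^2(X_L)$ is t.f.d.\ for all $L$ if and only if $F^2(U_L)$ is.

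\textbf{The comparison sequence.} Working over a fixed such $L$ (and dropping it from the notation), I extract from \autoref{lem:maindiagram} the exact column $\CH^2(X,1)\xrightarrow{g}SK_1(D)\xrightarrow{f}F^1(U)\xrightarrow{\beta}F^1(X)\to 0$, together with the relations $\alb_U\circ f=\delta\circ\varepsilon$, $\gamma\circ\alb_U=\alb_X\circ\beta$ and $\varepsilon\circ g=0$, where $\delta$ is injective with $\img\delta=\ker\gamma$. A diagram chase shows that $\alpha:=\beta|_{F^2(U)}$ lands in $F^2(X)$ with kernel $\ker\alpha=f(\ker\varepsilon)\cong\ker\varepsilon/\img g$ (using $\ker f=\img g\subseteq\ker\varepsilon$). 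Moreover the rule sending $y\in F^2(X)$ to the class of $t_y$, where $\alb_U(x)=\delta(t_y)$ for any $\beta$-lift $x$ of $y$, is a well-defined injection $F^2(X)/\img\alpha\hookrightarrow\cok\varepsilon$. Thus I obtain a short exact sequence $0\to f(\ker\varepsilon)\to F^2(U)\xrightarrow{\alpha}\img\alpha\to 0$ with $\img\alpha$ of index at most $|\cok\varepsilon|$ in $F^2(X)$.

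\textbf{The two torus facts.} The theorem then reduces, via \autoref{lem:F+D}, to: \emph{(i)} $\cok\varepsilon$ is finite; and \emph{(ii)} $f(\ker\varepsilon)=\ker\varepsilon/\img g$ is t.f.d. For both I analyze $\varepsilon$ on toric parts. Since $T(k)\cong(k^\times)^s$, $s=r-n$, has no nonzero divisible element, $\varepsilon$ kills $SK_1(D)_{\dv}$ and, by \autoref{prop:SK1}, induces modulo finite groups a continuous homomorphism of split tori $\psi:(k^\times)^r\to(k^\times)^s$. The heart is that the toric component of $\psi$ is the projection of $(k^\times)^r$ onto the quotient by the image of the decomposable $(2,1)$-cycles: the cycles from $\Pic(X)\otimes k^\times$ have, under $g$, toric component exactly $\iota^\star(\NS(X))\otimes k^\times$, which by \autoref{prop:NSXcomputations} has rank $n$, and these cycles lie in $\ker\varepsilon$ because $\varepsilon\circ g=0$. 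A rank count then forces $\psi$ to be rationally surjective with $\ker\psi$ of rank $n$ agreeing rationally with $\iota^\star(\NS(X))\otimes k^\times$; \autoref{prop:torus1} upgrades rational surjectivity to finiteness of $\cok\psi$, giving \emph{(i)}, while \autoref{prop:torus1} and \autoref{prop:torus2} give that $\ker\psi/\bigl(\iota^\star(\NS(X))\otimes k^\times\bigr)$ is finite. Feeding this back, $\ker\varepsilon$ is divisible (namely $SK_1(D)_{\dv}$) extended by a group whose toric part is annihilated by $\img g$ and whose remaining part is finite, so $\ker\varepsilon/\img g$ is divisible-by-finite, hence t.f.d.; this is \emph{(ii)}.

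\textbf{Conclusion and main obstacle.} With \emph{(i)} and \emph{(ii)}, both implications follow. If $F^2(X)$ is t.f.d., then so is its finite-index subgroup $\img\alpha$ (a divisible subgroup of finite index survives and the bounded-torsion part stays bounded), and then \autoref{lem:F+D} applied to $0\to f(\ker\varepsilon)\to F^2(U)\to\img\alpha\to 0$ shows $F^2(U)$ is t.f.d. Conversely, $\img\alpha$ is a quotient of $F^2(U)$, hence t.f.d., and since $F^2(X)$ is an extension of the finite group $F^2(X)/\img\alpha\hookrightarrow\cok\varepsilon$ by $\img\alpha$, \autoref{lem:F+D} shows $F^2(X)$ is t.f.d. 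I expect the genuinely hard step to be the toric comparison underlying \emph{(i)}--\emph{(ii)}: identifying, up to finite and divisible groups, the kernel of the ``large'' toric piece $(k^\times)^r$ of $SK_1(D)$ with the span $\iota^\star(\NS(X))\otimes k^\times$ of the decomposable cycles, so that nothing non-torsion survives in $\ker\varepsilon/\img g$. This is exactly where \autoref{prop:NSXcomputations} (to pin the rank at $n$), the structural result \autoref{prop:SK1} (to isolate the $(k^\times)^r$), and the continuity arguments \autoref{prop:torus1}--\autoref{prop:torus2} (to convert statements about ranks into finiteness) must be combined, and also where compatibility with the finite base changes of the first paragraph has to be handled with care.
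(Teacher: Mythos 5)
Your overall architecture matches the paper's proof step for step: the same preliminary reductions (the t.f.d.\ reformulation via \autoref{finiteness-n-torsion-sus} and Colliot-Th\'el\`ene's theorem, the finite base change and norm argument, the trivial $\dim D=0$ case), the same comparison sequence extracted from \autoref{lem:maindiagram} (your sequence $0\to f(\ker\varepsilon)\to F^2(U)\xrightarrow{\alpha}\img\alpha\to 0$ with $F^2(X)/\img\alpha\hookrightarrow\cok(\varepsilon)$ is the paper's sequence \eqref{eq5}, derived by a chase instead of the Snake Lemma), and the same reduction to the two toric statements, (i) $\cok(\varepsilon)$ finite and (ii) $\ker(\varepsilon)/\img(g)$ torsion of finite exponent by divisible, finished off by \autoref{lem:F+D}, \autoref{prop:NSXcomputations}, \autoref{prop:SK1} and \autoref{prop:torus1}--\autoref{prop:torus2}.

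However, there is a genuine gap at exactly the step you flag as the hard one: your proof of (i). You argue that since the decomposable cycles lie in $\ker\psi$ and their toric image $\iota^\star(\NS(X))\otimes k^\times$ has rank $n$, ``a rank count forces $\psi$ to be rationally surjective.'' This is a non sequitur: knowing that $\ker\psi$ \emph{contains} a subgroup of rank $n$ gives no lower bound whatsoever on $\img\psi$ --- the zero map satisfies everything you have established, and nothing in your argument excludes $\ker\psi$ having rank strictly larger than $n$, which is precisely what must be ruled out. The identification of $\psi$, up to finite groups, with the projection of $(k^\times)^r$ onto the quotient by the decomposable cycles is the \emph{conclusion} of this part of the proof, not something one can read off from ranks; as written, your argument assumes what it needs to prove. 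The missing ingredient is the paper's genuinely geometric input (Claim 1): over $\overline{k}$ the composite $F^1(X_{\overline{k}})\hookrightarrow F^1(U_{\overline{k}})\xrightarrow{\alb_{U_{\overline{k}}}} Alb_{U_{\overline{k}}}(\overline{k})\twoheadrightarrow T(\overline{k})$ vanishes, because $Alb_{X_{\overline{k}}}$ is the \emph{universal quotient} of $F^1(X_{\overline{k}})$ by \cite[Theorem 3.29]{Rus} and there are no nonconstant maps from an abelian variety to a torus; combined with the surjectivity of $\alb_{U_{\overline{k}}}$ this forces $\varepsilon_{\overline{k}}$ to be surjective. One then descends to $k$ by a norm argument (using the structure of $SK_1(D_L)$ from \autoref{prop:SK1} and compatibility of $\varepsilon$ with norms) to conclude that $\cok(\varepsilon)$ is torsion, and only at that point does \autoref{prop:torus1} apply to give finiteness. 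Note also that your item (ii) is contingent on (i), since \autoref{prop:torus2} takes finite cokernel as a hypothesis; so the gap propagates to both toric facts and hence to both directions of the theorem.
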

 
 \begin{proof} We start with some preliminary reductions. 
For a smooth projective surface $X$ over a $p$-adic field $k$ it follows by \cite[Theorem 8.1]{CT1} that for every $n\geq 1$ the  group $\CH_0(X)_n$ is finite. Since the quotient $F^2(X)/F^2(X)_{\dv}$ is a direct summand of $F^2(X)$, it can be thought of as a subgroup of $\CH_0(X)$. Thus,  the group $F^2(X)/F^2(X)_{\dv}$ is finite if and only if it is torsion of  finite exponent. Similarly,  \autoref{finiteness-n-torsion-sus} gives that the same holds for smooth quasi-projective surfaces and Suslin's homology $H_0^{\Sus}(U)$. This together with a simple norm argument (see for example \cite[Lemma 3.3]{Gaz24}) gives that if the group $F^2(X_L)/F^2(X_L)_{\dv}$ is finite for some finite extension $L/k$ (resp. $F^2(U_L)/F^2(U_L)_{\dv}$), then the same is true for $F^2(X)/F^2(X)_{\dv}$ (resp. $F^2(U)/F^2(U)_{\dv}$). This allows us to prove the theorem after extending to a finite extension, 
and hence we may assume all the following are true: 
\begin{itemize}
\item $U(k)\neq\emptyset$.
\item Each irreducible component of $X-U$ has a $k$-rational point (and hence the maximal subtorus $T$ of the semi-abelian variety $Alb_U$ is split). 
\item The absolute Galois group $\Gal(\overline{k}/k)$ acts trivially on the geometric N\'{e}ron-Severi group $NS(X_{\overline{k}})$. 
\end{itemize}

Assume first that $X-U$ has dimension $0$. As discussed in \autoref{genalbsection}, in this case $T=0$ and $Alb_U=Alb_X$.
Moreover, it follows from \autoref{lem:Sus-hom-surjection} that the map $\beta:H_0^{\Sus}(U)\to \CH_0(X)$ is an isomorphism. Since we assumed that $U$ has a $k$-rational point, this descends to an isomorphism $F^1(U)\xrightarrow{\beta} F^1(X)$. 
We conclude that in this case $F^2(U) \cong F^2(X)$, and hence the theorem holds trivially. 

From now on assume that $X-U$ is the support of a simple normal crossing divisor $D=D_1+\cdots+D_r$. We denote by $s=n-r$ the dimension of the maximal subtorus of $Alb_U$. We consider the commutative diagram of \autoref{lem:maindiagram}, 
 \[
\begin{tikzcd}
&     & \CH^2(X,1) \arrow[r] \arrow[d, "g"] & 0 \arrow[d] \\
&     & SK_1(D) \arrow[r, "\varepsilon"] \arrow[d, "f"] & T(k) \arrow[d, "\delta"] \\
	0 \arrow[r] & F^{2}(U) \arrow[r] \arrow[d, "\alpha"] & F^{1}(U) \arrow[r, "\alb_U"] \arrow[d, "\beta"] & Alb_{U}(k) \arrow[d, "\gamma"] \\
	0 \arrow[r] & F^{2}(X) \arrow[r]                 & F^1(X)  \arrow[r, "\alb_X"]    \arrow[d]             & Alb_{X}(k) \arrow[d] \\
	 &                  & 0   & 0.
\end{tikzcd}
\]  

 Applying the Snake Lemma with respect to 
two rightmost columns yields an exact sequence 
\begin{equation}\label{eq3}
\ker(\varepsilon)\xrightarrow{f} F^2(U)\xrightarrow{\alpha} F^2(X)\to\cok(\varepsilon)\to\cok(\alb_U)\to\cok(\alb_X)\to 0.
\end{equation}

\textbf{Claim 1:} The map $\varepsilon: SK_1(D)\to T(k)$ has finite cokernel.

\textbf{Proof.} 
Recall from \autoref{rem:epsilonmap} that the map $\varepsilon$ in the above commutative diagram coincides with the composition $\varepsilon=j^\star(k)\circ\varepsilon_0$. It follows from \autoref{prop:SK1}(1) that the map $\varepsilon_0$ is surjective. Moreover, it is clear that over the algebraic closure $\overline{k}$ so is $j^\star(\overline{k})$, since $j^\star:\G_m^{r}\to T$ is a surjective map of varieties. In particular,  Claim 1 is true over $\overline{k}$.

The next step is to show that over the base field $k$, $\cok(\varepsilon)$ is torsion. Let $x\in T(k)$. We may assume that $x$ is not a torsion element. 
Since $\varepsilon_{\overline{k}}$ is surjective, there exists a finite extension $L/k$ and some $y\in SK_1(D_L)$ such that $\res_{L/k}(x)=\varepsilon_L(y)$, where $\varepsilon_L:SK_1(D_L)\to T_L(L)$ is the base change to $L$ and $\res_{L/k}:T(k)\to T_L(L)$ is the restriction map. 
Since the field $L$ is $p$-adic, the torus $(L^\times)^s$ has no nonzero divisible elements and hence the map $\varepsilon_L$ factors through $H_L=SK_1(D_L)/SK_1(D_L)_{\dv}$. It follows by \autoref{prop:SK1} that the group $H_L$ fits into a short exact sequence $0\to F_2\to H_L\to F_1\oplus (L^\times)^r\to 0$, where $F_1, F_2$ are finite groups. Since we assumed that $x$ is nontorsion, we may even assume that $y\in (L^\times)^r$ by multiplying $x$ and $y$ if necessary by $N=|F_1|\cdot|F_2|$. Since the base field is perfect, the generalized Albanese map behaves well with finite base change, and we obtain a commutative diagram
\[
\begin{tikzcd}
&   (L^\times)^r \arrow[r, "\varepsilon_L"] \arrow[d, "N_{L/k}"]  & (L^\times)^s \arrow[d, "N_{L/k}"] \\
&  (k^\times)^r \arrow[r, "\varepsilon"]   & (k^\times)^s.
\end{tikzcd}
\] Let $n=[L:k]$. We then have,
\[\varepsilon(N_{L/k}(y))=N_{L/k}(\varepsilon_L(y))=N_{L/k}(\res_{L/k}(x))=x^n.\] This yields that $x^n\in\img(\varepsilon)$, verifying that $\cok(\varepsilon)$ is torsion. 

In fact, the above computation shows that the map $(k^\times)^r\xrightarrow{\varepsilon} (k^\times)^s$ has torsion cokernel. It then follows by \autoref{prop:torus1} that this cokernel is in fact finite. This completes the proof of Claim 1.  

The finiteness of $\cok(\varepsilon)$ together with the exact sequence \eqref{eq3} imply that we have an exact sequence 
\begin{equation}\label{eq4}
    \ker(\varepsilon)\to F^2(U)\xrightarrow{\alpha} F^2(X)\to F\to 0,
\end{equation} where $F$ is a finite group.

Now suppose that the group $F^2(U)/F^2(U)_{\dv}$ is finite. It then follows by \autoref{lem:F+D} that the group $F^2(X)/F^2(X)_{\dv}$ is torsion of finite exponent, and hence finite. This completes the proof of the converse direction of our theorem. 

We next focus on the forward direction, where we assume that $F^2(X)/F^2(X)_{\dv}$ is a finite group. The map $\varepsilon$ factors through $\img(f)$, which by exactness is isomorphic to $SK_1(D)/\img(g)$. Denote by $\overline{\varepsilon}:SK_1(D)/\img(g)\to (k^\times)^s$ the induced map with kernel $\ker(\overline{\varepsilon})\cong \ker(\varepsilon)/\img(g)\cap\ker(\varepsilon)$. Moreover, let $\pi:SK_1(D)\twoheadrightarrow SK_1(D)/\img(g)$ be the projection. The sequence \eqref{eq4} can be rewritten as 
\begin{equation}\label{eq5}
    0\to\ker(\overline{\varepsilon})\to F^2(U)\to F^2(X)\to F\to 0. 
\end{equation} Thus, using \autoref{lem:F+D}, the finiteness of $F^2(U)/F^2(U)_{\dv}$ can be reduced to showing the following claim: \\
\vspace{1mm}
\textbf{Claim 2:} The group $\ker(\overline{\varepsilon})$ is torsion  of finite exponent by divisible. 

\textbf{Proof.}
Consider the short exact sequence 
\[0\to F_2\to H\to F_1\oplus (k^\times)^r \to 0,\] where $H=SK_1(D)/SK_1(D)_{\dv}$.
Here we recall that $F_1, F_2$ are finite groups and $r$ is the number of irreducible components of the divisor $D$. 
Since quotients of divisible groups are divisible and the field $k$ is $p$-adic, it follows that $\pi(SK_1(D)_{\dv})\subset\ker(\overline{\varepsilon})$ is a divisible subgroup of $\ker(\overline{\varepsilon})$.

It remains to show that the kernel of the induced map 
\[\overline{\varepsilon}:\frac{SK_{1}(D)}{\img(g) + SK_1(D)_{\dv}}\to (k^\times)^s\] is torsion of finite exponent. Since $N\ker(\overline{\varepsilon})\subset\ker(N\circ\overline{\varepsilon})$, for all $N\geq 1$, it is enough to show that $\ker(N\circ\overline{\varepsilon})$ is torsion of finite exponent for some $N\geq 1$. Set $N=|F_1||F_2|$. Consider the map $N\varepsilon: SK_1(D)/SK_1(D)_{\dv}\to (k^\times)^s$. Since $Ny=0$, for all $y\in F_2$, this map factors through $(SK_1(D)/SK_1(D)_{\dv})/F_2\simeq F_1\oplus k^r$. Lastly, since $|F_1|$ divides $N$, it follows that $N\varepsilon$ factors through the toric piece, $(k^\times)^r$, of $H$. This argument shows that we may reduce to the case when $F_1=F_2=0$.

Recall from \autoref{genalbsection} that  $s=r-n$, where $n$ is the size of a maximal $\Z$-linearly independent subset of $\{[D_1],\ldots, [D_r]\}$ when considered as a subgroup of $\NS(X)$.  
Let us assume for simplicity that the N\'{e}ron-Severi group  $\NS(X_{\overline{k}})=\NS(X)$ is torsion-free. Consider the homomorphism $\Pic(X)\otimes k^\times\to\CH^2(X,1)$. Since $\NS(X)$ is free we have a decomposition 
$\Pic(X)\simeq\Pic^0(X)\oplus\NS(X)$, and hence an induced homomorphism $\NS(X)\otimes k^\times\to\CH^2(X,1)$. We consider the composition
\[\rho:\NS(X)\otimes k^\times\to\CH^2(X,1)\xrightarrow{g} SK_1(D)\twoheadrightarrow \bigoplus_{i=1}^r(V(D_i)\oplus k^\times)\twoheadrightarrow (k^\times)^r.\] 
Then this is precisely the map obtained by applying $\otimes k^\times$ to the pullback 
\[\iota^\star:\NS(X)\to\bigoplus_{i=1}^r\NS(D_i)\] induced by the closed immersion $\iota:D\hookrightarrow X$. It follows from \autoref{prop:NSXcomputations} that $\img(\iota^\star)\cong\Z^n$, and hence the map 
$\overline{\varepsilon}: \frac{SK_{1}(D)}{\img(g) + SK_1(D)_{\dv}}\to (k^\times)^s$ factors through an $r-n=s$ dimensional torus. 
Since we already know from Claim 1 that $\cok(\overline{\varepsilon})$ is finite, Claim 2 follows from \autoref{prop:torus2}. 

If $\NS(X_{\overline{k}})=\NS(X)$ is not torsion-free, then we can imitate the above argument by considering the surjection $\Pic(X)\to \NS(X)/\NS(X)_{\tor}\to 0$, which splits, and using \autoref{NSXfree}.

 \end{proof}
   For a smooth projective variety $X$ over a $p$-adic field $k$, S. Saito and Sujatha (\cite[p.~409]{SSu}) showed that the cokernel of the Albanese map $F^1(X)\xrightarrow{\alb_X}Alb_X(k)$ is finite. As a byproduct of the proof of \autoref{thm:main2} we obtain an analog of this for quasi-projective surfaces under some assumptions.

\begin{cor}\label{cor:genalb} Let $X$ be a smooth projective surface over $k$ and $D$ a simple normal crossing divisor on $X$. Let $U=X-\supp(D)$. Suppose that $U(k)\neq\emptyset$ and that the maximal subtorus of $Alb_U$ is split.  Then the Albanese map $\alb_U: F^1(U)\to Alb_U(k)$ has finite cokernel. 
\end{cor}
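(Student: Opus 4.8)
The plan is to read the corollary off the tail of the long exact sequence already produced in the proof of \autoref{thm:main2}. Recall that applying the Snake Lemma to the two rightmost columns of the diagram of \autoref{lem:maindiagram} gives the exact sequence \eqref{eq3}, whose last terms read
\[
\cok(\varepsilon)\to\cok(\alb_U)\to\cok(\alb_X)\to 0.
\]
Hence $\cok(\alb_U)$ is an extension of $\cok(\alb_X)$ by a quotient of $\cok(\varepsilon)$, and it is enough to prove that both outer groups are finite.

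For the right-hand term I would simply invoke the theorem of S. Saito and Sujatha (\cite[p.~409]{SSu}), which gives that $\cok(\alb_X)$ is finite for every smooth projective variety over $k$, with no extra hypotheses. For the left-hand term, the finiteness of $\cok(\varepsilon)$ is precisely the content of Claim~1 in the proof of \autoref{thm:main2}: granting that the maximal subtorus $T$ is split, one shows that $\varepsilon_{\overline{k}}$ is surjective, deduces via a norm argument that $\cok(\varepsilon)$ is torsion, and finally upgrades this to finiteness through \autoref{prop:torus1}. With both inputs in hand, $\cok(\alb_U)$ sits between two finite groups and is therefore finite.

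The only point needing care is that Claim~1 was established after the preliminary reductions of \autoref{thm:main2}, in particular after arranging that every irreducible component of $D$ has a $k$-rational point (which is needed in order to use the toric description of \autoref{prop:SK1}). To accommodate this, I would first show that finiteness of $\cok(\alb_U)$ may be tested after a finite base change. Indeed, if $L/k$ is finite and $\cok(\alb_{U_L})$ has exponent $m$, then for $x\in Alb_U(k)$ the identity $N_{L/k}\circ\res_{L/k}=[L:k]$ together with the compatibility $\alb_U\circ N_{L/k}=N_{L/k}\circ\alb_{U_L}$, valid since $k$ is perfect, forces $m[L:k]\cdot x\in\img(\alb_U)$. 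Thus $\cok(\alb_U)$ is annihilated by $N:=m[L:k]$ and is a quotient of $Alb_U(k)/N$. Since $T$ is split the sequence $0\to (k^\times)^s\to Alb_U(k)\to Alb_X(k)\to 0$ is exact, and both $(k^\times)^s/N$ and $Alb_X(k)/N$ are finite; hence $Alb_U(k)/N$ is finite and so is $\cok(\alb_U)$. This reduction lets me enlarge $k$ so that the hypotheses of Claim~1 are met, while keeping $U(k)\neq\emptyset$ and $T$ split.

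The substantive work, namely the finiteness of $\cok(\varepsilon)$, has already been carried out inside the proof of \autoref{thm:main2} by means of the torus lemmas; so the only genuinely new ingredient is the base-change reduction of the previous paragraph, and that mild descent --- rather than any new geometric input --- is where I expect the only real (and minor) difficulty to lie.
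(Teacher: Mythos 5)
Your proposal is correct and is essentially the paper's own proof: the paper likewise reads the corollary off the tail of the exact sequence \eqref{eq3}, quoting Claim~1 from the proof of \autoref{thm:main2} for the finiteness of $\cok(\varepsilon)$ and Saito--Sujatha for the finiteness of $\cok(\alb_X)$, and concludes by exactness. The one place you go beyond the paper is the norm/restriction descent reducing to a finite extension over which every irreducible component of $D$ has a rational point (as needed for \autoref{prop:SK1} inside Claim~1); the paper's two-line proof simply asserts that Claim~1 applies ``under these assumptions'' and leaves this reduction implicit, so your extra step is a legitimate, correct patch of a detail the paper glosses over rather than a different route.
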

\begin{proof}
Consider the exact sequence \eqref{eq3}. It follows by Claim 1 in the proof of \autoref{thm:main2} that under these assumptions $\cok(\varepsilon)$ is finite. Moreover, the same is true for  $\cok(\alb_X)$ (\cite[p.~409]{SSu}). The result then follows by exactness. 

\end{proof}

\begin{rem}
    We expect that the assumptions of \autoref{cor:genalb} can potentially be removed. One could go further and explore quasi-projective analogs of the main result of \cite{Kai}. In this article, Kai proved that for a smooth projective variety $X$ over a $p$-adic field $k$ admitting a smooth projective model $\mathcal{X}$ over $\Spec(\mathcal{O}_k)$ whose Picard scheme $\Pic_{\mathcal{X}/\mathcal{O}_k}$ is smooth, the cokernel of $\alb_X$ is Pontryagin dual to  $\cok[\Pic(X_{\overline{k}})\to \NS(X_{\overline{k}})]$. The authors hope to explore this in a future paper. 
\end{rem}

\autoref{thm:main2} suggests that an analogue of Colliot-Thélène's conjecture must hold for smooth quasi-projective surfaces over \( p \)-adic fields. 
 This urges us to propose the following:

\begin{conj}\label{mainconj-quasiprojective}
Let \( U \) be a smooth quasi-projective surface over a finite extension \( k \) of the \( p \)-adic field \( \Q_p \). Then the kernel \( F^{2}(U) \) of the generalized Albanese map admits a decomposition \( F^{2}(U) \cong F \oplus D \), where \( F \) is a finite group and \( D \) is a divisible group.
\end{conj}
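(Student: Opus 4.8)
The plan is to reduce this conjecture to the projective Colliot-Th\'el\`ene conjecture (\autoref{mainconj}) by means of the already-established \autoref{thm:main2}. First I would reformulate the desired conclusion. Since the maximal divisible subgroup $F^2(U)_{\dv}$ is always a direct summand of $F^2(U)$, the asserted decomposition $F^2(U)\cong F\oplus D$ is equivalent to the finiteness of the quotient $F^2(U)/F^2(U)_{\dv}$. By \autoref{finiteness-n-torsion-sus} the group $H_0^{\Sus}(U)_n$ is finite for every $n\geq 1$, so $F^2(U)_n$ is finite, and therefore $F^2(U)/F^2(U)_{\dv}$ is finite precisely when it is torsion of finite exponent. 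This is exactly the weaker property isolated at the start of the proof of \autoref{thm:main2}, and it is the form of the statement I would actually establish.

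Next I would pass to a good compactification. Working in characteristic zero, resolution of singularities lets me choose a smooth projective surface $X$ and a dense open immersion $U\hookrightarrow X$ whose complement is a closed subset of dimension at most one; blowing up further I may assume its $1$-dimensional part $D$ is a simple normal crossing divisor. If the complement also contains isolated points, the same dimension-counting argument as in \autoref{lem:Sus-hom-surjection} (the groups $H^3_M$ and $H^4_M$ of a $0$-dimensional scheme with $\Z(2)$-coefficients vanish) identifies $F^2(U)$ with $F^2(X-D)$, so I may assume the complement is exactly $D$. With such a compactification in hand, \autoref{thm:main2} applies verbatim: the group $F^2(U_L)/F^2(U_L)_{\dv}$ is finite for every finite extension $L/k$ if and only if $F^2(X_L)/F^2(X_L)_{\dv}$ is finite for every finite extension $L/k$, the latter condition being precisely \autoref{mainconj} for the projective surface $X$ over all finite extensions of $k$. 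The only genuinely new bookkeeping beyond \autoref{thm:main2} is descending the ``for all $L$'' quantifier to the statement for $U$ itself; here I would invoke the norm argument used in that proof, whose effect on the finite subquotient is multiplication by $[L:k]$, so finiteness over a single $L$ already descends to $U$.

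The main obstacle is that \autoref{mainconj} for projective surfaces is itself open. Indeed, the case $U=X$ of \autoref{mainconj-quasiprojective} recovers \autoref{mainconj} with $Alb_U=Alb_X$ and $T=0$, so the two conjectures are \emph{equivalent} (one implication by specialization, the other by the reduction above) and one cannot expect an unconditional proof of either. What one can do is produce unconditional evidence in every case where the projective conjecture is accessible: whenever the compactification $X$ is dominated by a product of curves whose Jacobians have a mixture of potentially good ordinary and multiplicative reduction, \autoref{cor:newevidence} supplies \autoref{mainconj} for $X_L$ for all $L$, and the reduction then yields \autoref{mainconj-quasiprojective} unconditionally for such $U$.

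Within any attempt to push the unconditional range further, I expect the delicate points to remain those already confronted in \autoref{thm:main2}: the comparison of the toric part $T(k)$ of $Alb_U$ with the toric part of $SK_1(D)$, which is controlled through the N\'eron--Severi computation of \autoref{prop:NSXcomputations} and the endomorphism-of-tori input \autoref{prop:torus2}, together with the structural description of the torsion subquotient of $SK_1(D)$ furnished by \autoref{prop:SK1}. These are precisely the steps that make the snc case genuinely harder than the $0$-dimensional case, and they are the places where any strengthening of the hypotheses (for instance, removing the split-torus assumption) would have to be absorbed.
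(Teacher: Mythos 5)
Your proposal is correct and takes essentially the same route as the paper: since the statement is a conjecture containing \autoref{mainconj} as the special case $U=X$, the paper's actual result is \autoref{maincor}, which establishes the equivalence with \autoref{mainconj} by precisely your reduction --- resolve singularities and blow up to obtain a smooth projective compactification whose complement is a simple normal crossing divisor (or $0$-dimensional), apply \autoref{thm:main2}, and descend along a finite extension via the norm argument together with \autoref{finiteness-n-torsion-sus}. The only cosmetic difference is that you dispose of isolated points in the complement by the dimension count of \autoref{lem:Sus-hom-surjection}, whereas the paper's ``suitable blow-ups'' absorb them into the snc divisor; both work.
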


It is clear that if \autoref{mainconj-quasiprojective} holds, then \autoref{mainconj} is true for smooth projective surfaces. As a consequence of \autoref{thm:main2}, we will now show that \autoref{mainconj} for smooth projective surfaces also implies \autoref{mainconj-quasiprojective}.

 \begin{cor}\label{maincor}
     \autoref{mainconj} holds for smooth projective surfaces if and only if \autoref{mainconj-quasiprojective} holds.
 \end{cor}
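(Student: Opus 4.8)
The reverse implication is immediate and already observed in the text: a smooth projective surface $X$ is in particular a smooth quasi-projective surface (take $U=X$), and in that case $H_0^{\Sus}(X)=\CH_0(X)$ and $Alb_U=Alb_X$, so $F^2(U)=F^2(X)$; thus \autoref{mainconj-quasiprojective} specializes to \autoref{mainconj} for projective surfaces. The content of the corollary is therefore the forward implication, and the plan is to deduce it directly from \autoref{thm:main2}.

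First I would record the reformulation that puts both conjectures in the form handled by \autoref{thm:main2}. For any abelian group $A$ the sequence $0\to A_{\dv}\to A\to A/A_{\dv}\to 0$ splits, since the divisible group $A_{\dv}$ is an injective $\Z$-module; hence $A\cong A_{\dv}\oplus A/A_{\dv}$, and asking for a decomposition $A\cong F\oplus D$ with $F$ finite and $D$ divisible is exactly asking that $A/A_{\dv}$ be finite. Applying this with $A=F^2(X)$ and with $A=F^2(U)$ shows that \autoref{mainconj} (resp. \autoref{mainconj-quasiprojective}) for a given surface is equivalent to the finiteness of $F^2(X)/F^2(X)_{\dv}$ (resp. $F^2(U)/F^2(U)_{\dv}$).

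Now assume \autoref{mainconj} for all smooth projective surfaces, and let $U$ be an arbitrary smooth quasi-projective surface over $k$. By resolution of singularities in characteristic zero I would choose a smooth projective surface $X$ together with a dense open immersion $U\hookrightarrow X$ whose complement $X-U$ is either $0$-dimensional or a simple normal crossing divisor; such a good compactification always exists by Hironaka's theorem. For every finite extension $L/k$ the base change $X_L$ is again a smooth projective surface over a $p$-adic field, so the hypothesis gives a decomposition of $F^2(X_L)$ into a finite group plus a divisible group, and hence $F^2(X_L)/F^2(X_L)_{\dv}$ is finite for every such $L$. Since the complement of $U$ in $X$ is of the type required, \autoref{thm:main2} applies and yields that $F^2(U_L)/F^2(U_L)_{\dv}$ is finite for every finite extension $L/k$. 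Taking $L=k$ and invoking the splitting of the previous paragraph produces the decomposition $F^2(U)\cong F\oplus D$ with $F$ finite and $D$ divisible, which is precisely \autoref{mainconj-quasiprojective}.

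Since \autoref{thm:main2} carries all the analytic and motivic weight, the remaining points are only bookkeeping, and I do not expect a serious obstacle. The one thing that genuinely must be checked is that the chosen compactification $X$ has a boundary of the kind demanded by \autoref{thm:main2}, which is exactly what the existence of an snc (or, in the proper case, empty) compactification guarantees. I would also note that the subtle passage from ``torsion of finite exponent by divisible'' to honestly finite is already built into the statement of \autoref{thm:main2} through the finiteness of the $n$-torsion of Suslin homology (\autoref{finiteness-n-torsion-sus}), so no extra argument is needed there; the only care required is to thread the quantifier over all finite extensions $L/k$ correctly, as \autoref{thm:main2} is itself phrased in terms of finiteness over every finite extension.
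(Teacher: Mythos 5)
Your proposal is correct and takes essentially the same route as the paper: realize $U$ as a dense open subvariety of a smooth projective surface $X$ with complement $0$-dimensional or a simple normal crossing divisor (via resolution of singularities and blow-ups on the boundary), then feed the hypothesis into \autoref{thm:main2} and use the splitting of the maximal divisible subgroup to convert finiteness of $F^2(U)/F^2(U)_{\dv}$ into the asserted decomposition. The only difference is bookkeeping: because \autoref{thm:main2} is stated with the quantifier over all finite extensions on both sides, you may take $L=k$ directly in its conclusion, whereas the paper first base changes to arrange rational points, a split torus and trivial Galois action on $\NS$, and then descends back to $k$ via a norm argument together with \autoref{finiteness-n-torsion-sus}; both versions are valid.
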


 \begin{proof}
   
As mentioned above, we only need to prove the ``only if'' part. Assume that \autoref{mainconj} holds. Let \( U \) be any smooth quasi-projective surface over \( k \), and let \( Y \) be a projective variety such that $U$ is an open subvariety of $Y$. By resolution of singularities, there exists a proper birational morphism \( X \rightarrow Y \), where \( X \) is smooth. In fact, since \( Y \) is projective, the map \( X \rightarrow Y \) can be taken to be projective (see \cite[Corollary 3.22]{Kol}). Since \( U \) is smooth, it can be considered as an open subvariety of \( X \), where \( X \) is a smooth projective surface over \( k \). Indeed by performing suitable blow-ups, we may assume that \( X - U \) is the support of a simple normal crossing divisor (see \cite[Corollary 0.4]{CJS}). 
Furthermore, similarly to the proof of \autoref{thm:main2}, by base change to a finite extension \( L/k \), we may assume that $U(L)\neq\emptyset$, each irreducible component of $X-U$ has a $L$-rational point, the maximal subtorus $T$ of the semi-abelian variety $Alb_U$ is split and the Galois group $\Gal(\overline{L}/L)$ acts trivially on the N\'{e}ron-Severi group $\NS(X_{\overline{L}})$. 
Now applying  \autoref{thm:main2} to the pair ($U_{L}, X_{L}$), the group
$F^2(X_{L})/F^2(X_{L})_{\dv}$ is finite if and only if $F^2(U_{L})/F^2(U_{L})_{\dv}$ is finite. Since $F^2(X_{L})/F^2(X_{L})_{\dv}$ is finite by assumption, it follows that $F^2(U_{L})/F^2(U_{L})_{\dv}$ is also finite. We now apply a standard norm argument (see for example \cite[Lemma 3.3]{Gaz24}) to conclude that $F^2(U)/F^2(U)_{\dv}$ is torsion of finite exponent. Finally, using \autoref{finiteness-n-torsion-sus}, we conclude that $F^2(U)/F^2(U)_{\dv}$ is finite, which concludes the proof.   

\end{proof}

\section{New evidence for Colliot-Th\'{e}l\`{e}ne's Conjecture}\label{sec:5}
\subsection{Proof of \autoref{thm:main1intro}}\label{sec:CTconj}
 We start with the following lemma. 

\begin{lem} Let $X, Y$ be smooth projective varieties over a perfect field $k$ and let $\phi:X\to Y$ be a birational morphism. Suppose that $X(k), Y(k)\neq\emptyset$. Then there is an isomorphism $F^2(X)\simeq F^2(Y)$. 
\end{lem}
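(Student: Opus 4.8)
The plan is to show that $\phi$ induces compatible isomorphisms at every level of the filtration $F^2\subseteq F^1\subseteq\CH_0$, so that passing to kernels yields $F^2(X)\simeq F^2(Y)$. Concretely, I would establish three things: that the pushforward $\phi_*\colon\CH_0(X)\to\CH_0(Y)$ is an isomorphism; that it preserves the degree map and hence restricts to an isomorphism $F^1(X)\xrightarrow{\sim}F^1(Y)$; and that it is compatible, through a birationally induced isomorphism $Alb_X\simeq Alb_Y$, with the two Albanese maps. Granting these, the relation
\[
\alb_Y\circ\phi_* = Alb(\phi)\circ\alb_X,
\]
with $Alb(\phi)$ an isomorphism, forces $\phi_*$ to carry $F^2(X)=\ker(\alb_X)$ isomorphically onto $F^2(Y)=\ker(\alb_Y)$.

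First I would invoke the classical birational invariance of the Chow group of zero-cycles for smooth projective varieties (Colliot-Th\'{e}l\`{e}ne--Coray, Fulton): a birational morphism $\phi\colon X\to Y$ of smooth projective varieties induces an isomorphism $\phi_*\colon\CH_0(X)\xrightarrow{\sim}\CH_0(Y)$. Surjectivity is the easy half: writing $V\subseteq Y$ for the open locus over which $\phi$ is an isomorphism and $Z=Y\setminus V$ (of codimension $\geq 1$), every zero-cycle class on $Y$ is represented, after a moving argument on the smooth projective $Y$, by a cycle supported on $V$, which then lifts along the isomorphism $\phi^{-1}(V)\cong V$. For the degree map, if $x\in X_{(0)}$ then $\phi_*[x]=[\kappa(x):\kappa(\phi(x))]\,[\phi(x)]$, so $\deg\phi_*[x]=[\kappa(x):k]=\deg[x]$; hence $\phi_*$ preserves degrees and restricts to an isomorphism on the degree-zero subgroups $F^1$.

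Next I would treat the Albanese side. Since $Y$ is smooth and $\phi$ is a proper birational morphism onto it, one has $\phi_*\cO_X=\cO_Y$ and $R^i\phi_*\cO_X=0$ for $i>0$, so $H^1(X,\cO_X)\cong H^1(Y,\cO_Y)$ and the pullback $\phi^*\colon\Pic^0(Y)\to\Pic^0(X)$ is an isomorphism of abelian varieties; dualizing gives an isomorphism $Alb(\phi)\colon Alb_X\xrightarrow{\sim}Alb_Y$. Using $X(k),Y(k)\neq\emptyset$, I would fix base points $x_0\in X(k)$ and $y_0=\phi(x_0)\in Y(k)$, so that both Albanese maps are defined over $k$; functoriality of the Albanese construction for the morphism $\phi$ then yields precisely the compatibility displayed above. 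Combined with the isomorphism $\phi_*\colon F^1(X)\xrightarrow{\sim}F^1(Y)$, a diagram chase on kernels produces the desired isomorphism $F^2(X)\simeq F^2(Y)$.

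The only genuinely nontrivial point is the injectivity of $\phi_*$ on $\CH_0$, that is, the full strength of birational invariance; everything else is formal. I expect to handle this by citing the standard theorem rather than reproving it, but should a self-contained argument be preferred, the perfectness of $k$ supplies the moving lemmas needed to compare the localization sequences for $(X,E,U)$ and $(Y,Z,V)$, where $E=\phi^{-1}(Z)$ is the exceptional locus; the point is that the contributions of $E$ and $Z$ to $\CH_0$ match under pushforward because $\phi$ is an isomorphism away from a locus of codimension $\geq 1$.
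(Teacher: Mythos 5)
Your proposal follows essentially the same route as the paper's proof: birational invariance of zero-cycles gives the isomorphism $F^1(X)\simeq F^1(Y)$ (the paper cites this directly for $F^1$ rather than for all of $\CH_0$), birational invariance of the Albanese variety gives $Alb_X\simeq Alb_Y$, and the compatibility diagram then identifies the kernels $F^2(X)\simeq F^2(Y)$. The only caveat is your justification of the Albanese step via $R^i\phi_*\cO_X=0$: over a perfect field of positive characteristic this vanishing is a nontrivial theorem (Chatzistamatiou--R\"ulling), and passing from $H^1(\cO)$ to $\Pic^0$ is delicate when the Picard scheme is non-reduced, whereas the standard characteristic-free argument (implicitly the one behind the fact the paper cites) is that rational maps from smooth projective varieties to abelian varieties extend to morphisms, so the birational inverse of $\phi$ already induces the inverse homomorphism on Albanese varieties.
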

\begin{proof}
Let $\phi_\star:\CH_0(X)\to \CH_0(Y)$ be the induced pushforward, which restricts to a homomorphism $\phi_\star:F^1(X)\to F^1(Y)$. It follows by \cite[Proposition 6.3]{CT3} that $F^1(X)\simeq F^1(Y)$. Moreover, the Albanese variety of a smooth projective variety is a birational invariant. Thus, we have a commutative diagram with exact rows,
\[
\begin{tikzcd} 0\arrow[r] & F^2(X) \arrow[r] \arrow[d, "\pi_\star"] &
	  F^{1}(X) \arrow[r, "\alb_X"] \arrow[d, "\pi_\star"] & Alb_{X}(k) \arrow[d, "\pi_\star"] \\
	     0\arrow[r] & F^2(Y) \arrow[r] &   F^1(Y)  \arrow[r, "\alb_Y"]                & Alb_{Y}(k) .
\end{tikzcd} 
\] 
Here the leftmost vertical map is induced from exactness. 
Since the second and third vertical maps are isomorphisms, so is the first one. 
    
\end{proof}

We are now ready to prove \autoref{thm:main1intro}, which we restate here. 

\begin{theo}\label{thm:main1} Let $X, Y$ be smooth projective surfaces over a $p$-adic field $k$. Suppose there is a generically finite rational map $\pi: X\dashrightarrow Y$. If \autoref{mainconj} is true for $X_L$ for every finite extension $L/k$, then it is true for $Y$.  The same result holds true if the map $\pi$ is defined over a finite extension of the base field. 
\end{theo}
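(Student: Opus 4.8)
The plan is to reduce, via resolution of indeterminacy and the birational invariance of $F^2$ recorded in the previous lemma, to the case where $\pi$ is a genuine generically finite \emph{morphism}, and then to transport the finiteness of $F^2/F^2_{\dv}$ from $X$ to $Y$ across a finite flat map of open subvarieties, using \autoref{thm:main2} to pass between the projective surfaces and their open parts. First I would eliminate the indeterminacy of $\pi$: there is a composite of blow-ups $\rho\colon \widetilde X\to X$ such that $\pi\circ\rho$ extends to a morphism $f\colon \widetilde X\to Y$. Since $\pi$ is generically finite and $\rho$ birational, $f$ is a dominant, hence surjective, generically finite morphism of smooth projective surfaces. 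By the previous lemma, $F^2(\widetilde X_L)\cong F^2(X_L)$ for every finite $L/k$ (after the harmless base change needed to guarantee rational points, finiteness of $F^2/F^2_{\dv}$ descending along finite extensions by the norm argument of \cite[Lemma 3.3]{Gaz24}); thus \autoref{mainconj} holds for $\widetilde X_L$ for all $L$, and I may replace $X$ by $\widetilde X$ and assume $\pi=f$ is a morphism.

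Next I would isolate the finite flat locus. As $f$ is surjective and generically finite, the set $Z=\{y\in Y:\dim f^{-1}(y)\ge 1\}$ is closed and, by surjectivity onto the surface $Y$, can contain no curve; hence $Z$ is $0$-dimensional. Setting $V_0=Y-Z$ and $U_0=f^{-1}(V_0)$, the map $f\colon U_0\to V_0$ is proper with finite fibres, hence finite, and since $U_0,V_0$ are smooth surfaces it is finite \emph{flat} of degree $d=\deg(f)$ by miracle flatness. The complement $Y-V_0=Z$ is $0$-dimensional, while on the $X$-side $X-U_0=f^{-1}(Z)$ has dimension $\le 1$. Blowing up $X$ repeatedly inside this complement—which leaves $U_0$ untouched and, by the previous lemma, preserves $F^2$—I may arrange that $X-U_0$ is a simple normal crossing divisor while keeping $f|_{U_0}\colon U_0\to V_0$ finite flat of degree $d$.

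Now I would run the transfer. Because \autoref{mainconj} holds for $X_L$ for all $L$, \autoref{thm:main2} applied to the snc pair $U_0\subseteq X$ gives that $F^2(U_{0,L})/F^2(U_{0,L})_{\dv}$ is finite for every $L$. The finite flat map $f|_{U_0}$ induces, compatibly with base change and with the degree and generalized Albanese maps, a pushforward $f_\ast\colon F^2(U_0)\to F^2(V_0)$ and a flat pullback $f^\ast\colon F^2(V_0)\to F^2(U_0)$ with $f_\ast f^\ast=d\cdot\id$; hence $\cok(f_\ast)$ is killed by $d$, so it is torsion of finite exponent. Applying \autoref{lem:F+D} to the exact sequence $F^2(U_{0,L})\xrightarrow{f_\ast}F^2(V_{0,L})\to\cok(f_\ast)\to 0$ shows that $F^2(V_{0,L})$ is torsion of finite exponent by divisible, and the finiteness of the $n$-torsion of Suslin homology (\autoref{finiteness-n-torsion-sus}) upgrades this to the finiteness of $F^2(V_{0,L})/F^2(V_{0,L})_{\dv}$ for all $L$. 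Finally, since $Y-V_0$ is $0$-dimensional, \autoref{thm:main2} (whose proof identifies $F^2(V_0)=F^2(Y)$ in the $0$-dimensional case) gives that $F^2(Y_L)/F^2(Y_L)_{\dv}$ is finite for all $L$; in particular \autoref{mainconj} holds for $Y$. For the last assertion, if $\pi$ is only defined over a finite extension $k'/k$, the same argument over $k'$ (using that \autoref{mainconj} holds for $X_L$ for every $L/k$, a fortiori for every $L/k'$) yields finiteness of $F^2(Y_{k'})/F^2(Y_{k'})_{\dv}$, whence the conclusion for $Y$ by the norm argument.

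I expect the main obstacle to be the bookkeeping of the geometric reduction—simultaneously achieving a finite flat map on the open parts and simple normal crossing complements on \emph{both} surfaces, while justifying that the blow-ups do not disturb the finite flat locus and preserve $F^2$—together with verifying that the pushforward–pullback formalism on Suslin homology is compatible with the generalized Albanese filtration, so that $f_\ast,f^\ast$ genuinely restrict to $F^2$ and satisfy $f_\ast f^\ast=d\cdot\id$ compatibly with base change.
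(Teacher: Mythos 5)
Your proposal is correct and follows essentially the same route as the paper's own proof: reduce by blow-ups and the birational invariance of $F^2$ to a finite morphism between dense open subsets whose complements are simple normal crossing divisors or zero-dimensional, apply \autoref{thm:main2} in both directions, and transfer finiteness through the pushforward--pullback pair with $\pi_\star\pi^\star=n$ (your appeal to \autoref{lem:F+D} is equivalent to the paper's hand computation with $F^3(V)=\pi_\star(F^2(U))$ and the divisible subgroup $D=\pi_\star(F^2(U)_{\dv})$). The compatibility you flag as the main obstacle --- that $\pi^\star$ genuinely restricts to $F^2$ --- is real but surmountable, and it is precisely where the paper invests its technical effort: the pullback $\pi^\star\colon Alb_V\to Alb_U$ is constructed over $\overline{k}$ using Russell's theorem that $Alb_{V_{\overline{k}}}$ is the universal semi-abelian quotient of $F^1(V_{\overline{k}})$ (\cite[Theorem 3.29]{Rus}), then descended to a finite extension of $k$, after which the commutative square relating $\pi^\star$ on $F^1$ to $\pi^\star$ on Albanese points yields $\pi^\star(F^2(V))\subseteq F^2(U)$.
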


\begin{proof} 
Let $U\subset X, V\subset Y$ be dense open subsets such that $\pi:U\to V$ is a finite morphism of degree $n$. We first want to reduce to the case when the complements $X-U, Y-V$ are either $0$-dimensional or support of simple normal crossing divisors. 
By using a sequence of blow-ups (\cite[Corollary 0.4]{CJS}), we obtain a variety $\Tilde{Y}$ birational to $Y$ and an open $\Tilde{V}$ in $\Tilde{Y}$ such that $\Tilde{Y}-\Tilde{V}$ is  the support of a simple normal crossing divisor and $U\xrightarrow{\pi} \Tilde{V}$ is a finite map. Repeating the argument for the pair $X, U$ we  obtain the desired reduction. 

Similarly to the proof of \autoref{thm:main2}, since $X, Y$ are smooth surfaces over $k$, it is enough to prove the theorem after we base change to a finite extension. Thus, we may assume $U(k), V(k)\neq\emptyset$, and that the rational map $\pi$ is defined over $k$. 
Since the group $F^2(X)/F^2(X)_{\dv}$ is finite by assumption, it follows by the converse direction of \autoref{thm:main2} that so is the group $F^2(U)/F^2(U)_{\dv}$. We will show that $F^2(V)/F^2(V)_{\dv}$ is torsion of finite exponent (and hence finite by \autoref{finiteness-n-torsion-sus}). Then, the finiteness of $F^2(Y)/F^2(Y)_{\dv}$ will follow from the forward direction of \autoref{thm:main2}. 
We will prove that $F^2(V)/F^2(V)_{\dv}$ is torsion of finite exponent by using a pushforward and pullback argument along the lines of \cite[Theorem 1.2]{GL24}.  

Since $\pi:U\to V$ is a finite surjective morphism between equidimensional varieties, it induces pushforward and pullback maps on Suslin homology, 
\[\pi_\star: H_0^\Sus(U)\to H_0^\Sus(V),\;\;\;\;\;\;\;\pi^\star: H_0^\Sus(V)\to H_0^\Sus(U),\] such that $\pi_\star\circ\pi^\star=n$, where $n$ is the degree of the map $\pi$.  
One can see this by using the interpretation of $H_0^\Sus$ as motivic cohomology as in \autoref{thm:compactly-support-suslin} (see for example \cite[p.~5]{Yam1}).  
Since we assumed $U(k), V(k)\neq\emptyset$, these maps clearly reduce to homomorphisms on the degree $0$ subgroups, 
\[\pi_\star: F^1(U)\to F^1(V),\;\;\;\;\;\;\;\pi^\star: F^1(V)\to F^1(U).\] 
We claim that $\pi$ also induces homomorphisms at the level of  Albanese varieties, 
\[\pi_\star: Alb_U\to Alb_V,\;\;\;\;\;\;\;\pi^\star: Alb_V\to Alb_U.\]
Since $U(k)\neq\emptyset$, $Alb_U$ is universal for morphisms from $U$ to semi-abelian varieties. Thus, the map $U\xrightarrow{\pi} V\xrightarrow{\alb_V} Alb_V$ factors through $Alb_U$. This gives the map $\pi_\star$. The pullback is constructed in \cite[p.~5]{SpSz}, which we now review. The graph of the finite map $U\xrightarrow{\pi}V$ induces a finite correspondence $Z\hookrightarrow V\times U$, which in turn induces the composition
\[V\to\Sym^n U\xrightarrow{\alb_U}\Sym^n Alb_U\xrightarrow{+} Alb_U.\] Here the first map is induced by $\pi$, and the last map is addition in the algebraic group $Alb_U$. By universality of the Albanese variety, this composition factors through a homomorphism $\pi^\star:Alb_V\to Alb_U$. 
We have a commutative diagram  
 \[
\begin{tikzcd}
	  F^{1}(V) \arrow[r, "\alb_V"] \arrow[d, "\pi^\star"] & Alb_{V}(k) \arrow[d, "\pi^\star"] \\
	        F^1(U)  \arrow[r, "\alb_U"]                & Alb_{U}(k) .
\end{tikzcd} 
\]  
Since $\pi^\star: Alb_V(k)\to Alb_U(k)$ is a group homomorphism, it follows that if $z\in \ker(\alb_V)$, then $\pi^\star(z)\in\ker(\alb_U)$, and hence we get an induced homomorphism
\[\pi^\star: F^2(V)\to F^2(U).\]
Define $F^3(V):=\pi_\star(F^2(U))$, so that we have a filtration $H_0^\Sus(V)\supset F^1(V)\supset F^2(V)\supset F^3(V)\supset 0$. 
We claim that the quotient $F^2(V)/F^3(V)$ is $n$-torsion. For, 
\[\pi_\star\circ\pi^\star(F^2(V))=nF^2(V)\subset  \pi_\star(F^2(U))=F^3(V).\]

Next, let $D=\pi_\star(F^2(U)_{\dv})$, which is a divisible group. We have a commutative diagram with exact rows and surjective vertical maps, 
\[\begin{tikzcd}
0 \ar{r} & F^2(U)_{\dv}\ar{r}\ar{d}{\pi_\star} & F^2(U) \ar{r}\ar{d}{\pi_\star} & F^2(U)/F^2(U)_{\dv} \ar{r}\ar{d}{\pi_\star} & 0 \\
0 \ar{r} & D\ar{r} & F^3(V) \ar{r} & F^3(V)/D \ar{r} & 0
\end{tikzcd}.\]  Since the group $F^2(U)/F^2(U)_{\dv}$ is finite, the same is true for $F^3(V)/D$. Lastly, consider the short exact sequence $0\to D\to F^2(V)\to F^2(V)/D\to 0$. Since $F^2(V)/F^3(V)$     is torsion of finite exponent and $F^3(V)/D$ finite, it follows that $F^2(V)/D$  is torsion of finite exponent. This implies that $D=\pi_\star(F^2(U)_{\dv})$ is the maximal divisible subgroup of $F^2(V)$, and hence the quotient $F^2(V)/F^2(V)_{\dv}$ is torsion of finite exponent as desired. 

\end{proof} 

\subsection{Examples}\label{sec:exs}

In this section we use \autoref{thm:main1} to obtain evidence for Colliot-Th\'{e}l\`{e}ne's \autoref{mainconj} for many new classes of surfaces. We recall the following two definitions. 

\begin{defn}
    Let $X$ be a smooth projective surface over a $p$-adic field $k$. We say that $X$ is geometrically dominated by a product of curves $C_1\times C_2$, if over the algebraic closure $\overline{k}$ there is a dominant rational map $C_1\times C_2\dashrightarrow X_{\overline{k}}$. 
\end{defn}
Note that since $X$, $C_1\times C_2$ are both surfaces, the above definition is equivalent to having a generically finite rational map $C_1\times C_2\dashrightarrow X$ (see for example \cite[Exercise II.3.22]{RH}).

\begin{defn}\label{reddef} Let $A$ be an abelian variety over a $p$-adic field $k$. Let $\mathcal{A}$ be the N\'{e}ron model of $A$ and $\mathcal{A}_s:=\mathcal{A}\times_{\Spec(\sO_k)}\Spec(\kappa)$ its special fiber. We say that $A$ has a mixture of good ordinary and split multiplicative reduction, if the connected component $\mathcal{A}_s^0$ containing the identity element of $\mathcal{A}_s$ is a semi-abelian variety over the finite residue field $\kappa$ that fits into an exact sequence
    \[0\to T\to \mathcal{A}_s^0\to B\to 0,\] where $B$ is an ordinary abelian variety  and $T$ is a split torus. 
\end{defn}

\autoref{thm:main1} and the results of Raskind and Spiess (\cite[Corollary 4.5.7]{RS})  yield the following Corollary.

\begin{cor}\label{cor:newevidence}
    Let $X$ be a smooth projective surface over a $p$-adic field $k$. Suppose that $X$ is geometrically dominated by a product of curves $C_1\times C_2$ and the Jacobians $J_1, J_2$ of $C_1, C_2$ have a mixture of good ordinary and split multiplicative reduction. Then \autoref{mainconj} is true for $X$. 
\end{cor}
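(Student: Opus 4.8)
The plan is to apply \autoref{thm:main1} with the product $C_1\times C_2$ playing the role of the source and the given surface $X$ playing the role of the target. The hypothesis that $X$ is geometrically dominated by $C_1\times C_2$ provides a dominant rational map $C_1\times C_2\dashrightarrow X_{\overline{k}}$ over $\overline{k}$. Since $C_1\times C_2$ and $X$ are both irreducible surfaces, any dominant rational map between them is automatically generically finite. Such a map is cut out by finitely much data, so it is already defined over some finite extension $L_0/k$; that is, we obtain a generically finite rational map $\pi\colon C_1\times C_2\dashrightarrow X$ defined over $L_0$. This is exactly the situation covered by the final sentence of \autoref{thm:main1}.

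By \autoref{thm:main1}, to deduce \autoref{mainconj} for $X$ it therefore suffices to verify \autoref{mainconj} for $(C_1\times C_2)_L=(C_1)_L\times (C_2)_L$ for every finite extension $L/k$. The Jacobian of $(C_i)_L$ is $(J_i)_L$, so the point is to check that each $(J_i)_L$ again has a mixture of good ordinary and split multiplicative reduction in the sense of \autoref{reddef}. Both good reduction and split multiplicative reduction are semistable, so each $J_i$ has semi-abelian reduction, and semi-abelian reduction is stable under arbitrary finite base change: the identity component of the special fibre of the N\'eron model of $(J_i)_L$ is obtained from that of $J_i$ by extension of the (finite) residue field. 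In particular the toric rank is unchanged, so the torus $T$ stays split and the abelian quotient $B$ stays an abelian variety which remains ordinary, since ordinariness is preserved under extension of the base. Hence each $(J_i)_L$ satisfies \autoref{reddef} over $L$.

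With the reduction hypotheses in place over every $L$, it remains to invoke the Raskind--Spiess result (\cite[Corollary 4.5.7]{RS}) for the products $(C_1)_L\times (C_2)_L$. When both factors carry an $L$-rational point this applies directly. In general one first passes to a finite extension $M/L$ over which $(C_1)_L$ and $(C_2)_L$ acquire rational points---which does not disturb the reduction type by the previous paragraph---applies Raskind--Spiess over $M$, and then descends to $L$: the standard norm argument (\cite[Lemma 3.3]{Gaz24}) together with the finiteness of the $n$-torsion of $\CH_0$ of a surface (\cite[Theorem 8.1]{CT1}) shows that $F^2((C_1\times C_2)_L)/F^2((C_1\times C_2)_L)_{\dv}$ is torsion of finite exponent, hence finite, which is equivalent to \autoref{mainconj} for this surface.

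Since the genuinely heavy content is contained in \autoref{thm:main1} and in the Raskind--Spiess theorem, both of which we may assume, the only real work in the corollary is the base-change bookkeeping above. This is also the step I expect to be the most delicate: one must confirm that neither the ``good ordinary'' nor the ``split multiplicative'' condition degenerates along a ramified extension $L/k$, which is precisely why the stability of the toric rank and of ordinariness under finite base change is the crucial ingredient.
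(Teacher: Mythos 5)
Your proof is correct and follows essentially the same route as the paper, which derives the corollary directly from \autoref{thm:main1} (using its final sentence, since the dominant map only exists over $\overline{k}$ and hence over some finite extension) combined with the Raskind--Spiess result. The details you supply---that a dominant rational map between surfaces is generically finite, that good ordinary and split multiplicative reduction are stable under finite base change via the identity component of the N\'eron model, and the norm argument to handle rational points---are exactly the bookkeeping the paper leaves implicit.
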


\begin{rem}\label{rem:reduction} \autoref{cor:newevidenceintro} can be thought of as following the general pattern that several important conjectures about algebraic cycles can be verified for varieties that are dominated by products of curves. Another example is the Tate conjecture for divisors (see \cite[Proposition 0.1]{Schoen}). We expect that this result can potentially be extended to higher dimensions. The real difficulty going forward is to remove the assumption on the reduction type of the Jacobians. \cite[Theorem 1.2]{GL} is the only case where some supersingular reduction factors are included. However, extending this even to a product $E_1\times E_2$ of two elliptic curves both having good supersingular reduction is a subtle problem. Nevertheless, we note that for an abelian variety $A$ over an algebraic number field $F$ a conjecture often attributed to Serre predicts that the places of good ordinary reduction are  plentiful. More precisely, it predicts  that the set of finite places $v$ of $F$ such that the base change $A_v$ to the completion $F_v$ has good ordinary reduction has positive density among all finite places. The case of elliptic curves is classical (\cite{Se3}). More recently the conjecture has been established for abelian surfaces (\cite{Og, Sw}). See also \cite{Fi} for some evidence for abelian threefolds. In higher dimensions this is known for example when $A$ has potential complex multiplication (\cite[Remark 12]{Fi}). 
\end{rem}

Next, we recall several classes of surfaces that are known to be geometrically dominated by products of curves. 
The most general case is the following. 

\subsubsection{Product-Quotient Surfaces}
Let $X=C_1\times C_2$ be a product of two smooth projective curves over $k$. Let $G$ be a finite group acting faithfully on $X$. Consider a resolution of singularities $Y$ of the quotient $X/G$. When $Y$ is minimal, it is usually referred to as \textit{product-quotient surface}. Such surfaces can be split into the following two different types (see \cite[Definitions 2.3, 2.4]{CP}). 

\begin{enumerate}
    \item[(i)] Suppose that $G$ is a subgroup of both $\Aut(C_1), \Aut(C_2)$ acting diagonally on $C_1\times C_2$. Then we say that $Y$ is a product-quotient surface of \textit{unmixed type}. 
    \item[(ii)] Let $X=C\times C$ and $G\leq \Z/2\times\Aut(C)$. Assume that there are elements of $G$ that exchange the two factors of $X$. Then we say that $Y$ is a product-quotient surface of \textit{mixed type}.  
\end{enumerate}

\begin{rem}\label{rem:resolutionsings} Finite group actions of unmixed type is the simpler of the two types in the sense that the singularities of the quotient $(C_1\times C_2)/G$ are all cyclic quotient singularities, whereas singularities in the mixed case are generally rather complicated. A very special case of a Type (i) surface is the Kummer surface $X=\Kum(A)$ associated to an abelian surface $A$, which is the minimal resolution of the quotient $A/\langle -1\rangle$. In the case $A=E_1\times E_2$ \autoref{mainconj} was proved in \cite[Theorem 1.2]{GL24} using a simpler version of \autoref{thm:main1}. Namely, if $A'$ is the blow-up of $A$ along the closed subscheme $A[2]$, then there is a commutative square 
\[\begin{tikzcd} A' \ar{r}{\sim} \ar{d}{\pi'} & A \ar{d}{\pi} \\
X \ar{r}{\sim} & A/\langle -1\rangle  
\end{tikzcd},\]
with the horizontal maps birational and the vertical maps finite morphisms of degree $2$. Thus, one can use the push-forward of the finite morphism $A'\xrightarrow{\pi'} X$, without having to work with open subvarieties. We note that such a simplified version of the argument cannot work for a general product-quotient surface of Type (i). Even though a cyclic-quotient singularity is the simplest type of singularity, Kollar (\cite[Example 2.30]{Kol}) has showed that when the order of the group $G$ is divisible by a prime $p\geq 5$, then in general there is no commutative square 
\[\begin{tikzcd} X' \ar{r}{\sim} \ar{d}{\pi'} & X \ar{d}{\pi} \\
Y \ar{r}{\sim} & X/G
\end{tikzcd},\] with $X', Y$ both smooth. However, when the group $G$ is of the form $G\cong(\Z/2)^n\times(\Z/3)^m$ for some $n,m\geq 0$, then such a diagram does exist. In fact, in this case the resolution of singularities of $(C_1\times C_2)/G$ can be obtained by a chain of blow-ups at closed points (\cite[Claim 2.29.1]{Kol}).  
\end{rem}

\subsubsection{Isotrivial Fibrations}\label{sec:isotrivial}
 We recall the following definition. 
\begin{defn}
    Let $Y\xrightarrow{\pi}C$ be a fibration over a smooth projective curve $C$ of genus $g\geq 0$. We say that $Y$ is isotrivial if all smooth fibers of $\pi$ are isomorphic. 
\end{defn}

 It is well-known (\cite[(1.1)]{Serrano}) that over $\overline{k}$ every isotrivial fibration is birational to a product-quotient surface $(C_1\times C_2)/G$ of unmixed type. This is a very wide class of surfaces. We next focus in particular to the $K3$ surfaces that happen to have a structure of an isotrivial fibration. For isotrivial surfaces with $p_g=q=1$ (resp. $p_g=q=2$) we refer to \cite{P, MP} (resp. \cite{CP}). 
 
\subsection*{Isotrivial $K3$ surfaces} 
A classical example of an isotrivial $K3$ surface is the Kummer surface $X=\Kum(A)$ associated to an abelian surface $A$. It is now known that the class of isotrivial $K3$'s goes much beyond the Kummer surfaces.

We recall that an automorphism $g$ of a $K3$ surface $X$ is called \textit{non-symplectic}, if it acts nontrivially on the nowhere vanishing holomorphic $2$-form $\omega$. In fact, if $g$ has order $p$, a prime number, then $g(\omega)=\zeta_p\omega$, where $\zeta_p$ is a primitive $p$-th root of unity. In \cite{GP} Garbagnati and Penegini proved that most $K3$ surfaces admitting a non-symplectic automorphism of order $p$ can be realized as the mimimal resolution of singularities of a product $C_1\times C_2$ by a diagonal action of $\Z/p\Z$ or $\Z/2p\Z$\footnote{In fact a maximal irreducible component of the moduli space of $K3$ surfaces with a nonsymplectic automorphism has such a structure.}. Moreover, they show that the curve $C_2$ can be taken to be the hyperelliptic curve  $D_p: y^2=u^p-1$. This curve has an automorphism $\delta_p:(x,y)\mapsto(\zeta_p x, y)$  of order $p$, and an automorphism $\tau_p$ of order $2p$ given by the composition of $\delta_p$ with the hyperelliptic involution. More precisely they prove the following theorem. 
\begin{theo} (\cite[Theorem 1.1]{GP})
    Let $S$ be a $K3$ surface admitting a non-symplectic automorphism of order $p=3$ (resp. $p=5, 7, 11, 13$, $p=17, 19$) whose fixed locus contains at least $2$ (resp. $0, 1$) curves. Then $S$ is the minimal resolution of singularities of the quotient $(C_1\times D_p)/(g_1\times \tau_p)$, where $g_1$ is an automorphism of $C_1$ of order $2p$. The non-symplectic automorphism of order $p$ on $S$ is induced by the automorphism $id\times \delta_p$. 
\end{theo}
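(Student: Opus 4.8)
The statement is a structure theorem: every K3 surface $S$ carrying a non-symplectic automorphism $\sigma$ of prime order $p$ with the prescribed fixed-locus behavior is realized as a product-quotient. The plan is to combine the lattice-theoretic classification of such automorphisms with an explicit construction, matching the two through their moduli dimensions and using the isotrivial-fibration structure recalled in Section~\ref{sec:isotrivial} as the bridge. Recall that $\mathrm{Fix}(\sigma)$ is a disjoint union of smooth curves and isolated points and that $\sigma$ scales the holomorphic $2$-form $\omega_S$ by a primitive $p$-th root of unity $\zeta_p$. By the classification of Nikulin and of Artebani--Sarti--Taki, for each admissible $p$ the pairs $(S,\sigma)$ with a fixed topological type of $\mathrm{Fix}(\sigma)$ form an irreducible moduli space whose dimension is read off from the $\Z[\zeta_p]$-module structure of the transcendental lattice; the fixed-locus hypotheses (at least two curves for $p=3$, at least one for $p=17,19$, none for the intermediate primes) are exactly those isolating the maximal-dimensional component.

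First I would treat the construction direction. Fix the curve $D_p\colon y^2=u^p-1$ with its order-$p$ automorphism $\delta_p$ and the order-$2p$ automorphism $\tau_p=\iota\circ\delta_p$, where $\iota$ is the hyperelliptic involution. For a smooth projective curve $C_1$ equipped with an order-$2p$ automorphism $g_1$ of a suitable ramification signature, let $S$ be the minimal resolution of $(C_1\times D_p)/\langle g_1\times\tau_p\rangle$. Since $g_1\times\tau_p$ acts diagonally with cyclic stabilizers, this is a product-quotient surface of unmixed type whose singularities are cyclic quotient singularities; one verifies, from the genera of $C_1$ and $D_p$ together with the branch data, that $K_S$ is trivial and that a K\"unneth plus eigenspace computation gives $h^{1,0}(S)=0$ and $h^{2,0}(S)=1$, so $S$ is a K3 surface. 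Because $\delta_p=\tau_p^{p+1}\in\langle\tau_p\rangle$, the automorphism $\id\times\delta_p$ commutes with $g_1\times\tau_p$ and descends to an order-$p$ automorphism $\sigma$ of $S$; from $\delta_p^{\ast}(u^i\,\mathrm{d}u/y)=\zeta_p^{i+1}\,u^i\,\mathrm{d}u/y$ one reads off that $\sigma$ scales $\omega_S$ by $\zeta_p$, so $\sigma$ is non-symplectic. This produces a family of K3's of exactly the required type.

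Next I would run the identification in reverse, to see that an arbitrary $(S,\sigma)$ is of this shape. Under the fixed-locus hypotheses the minimal resolution of the quotient $S/\sigma$ is a rational surface; a suitable pencil on it pulls back to a $\sigma$-equivariant fibration $h\colon S\to\P^1$, and one identifies its general fibre $F$ by recognizing the induced cyclic cover $F\to F/\sigma\cong\P^1$ as branched of precisely the type defining $y^2=u^p-1$, so that $F\cong D_p$. Thus $h$ is an isotrivial fibration with fibre $D_p$, and the result of Serrano recalled in Section~\ref{sec:isotrivial} (cf.~\cite{Serrano}) shows that $S_{\overline{k}}$ is birational to a product-quotient surface $(C_1\times D_p)/G$ of unmixed type with $G\leq\Aut(C_1)\times\Aut(D_p)$ cyclic. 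Tracking the actions of $\sigma$ and of the monodromy then pins down $G=\langle g_1\times\tau_p\rangle$ and $\sigma=\id\times\delta_p$, and recovers the curve datum $(C_1,g_1)$ from $S$; as $S$ is a minimal K3 surface, this birational identification upgrades to the asserted isomorphism with the minimal resolution of $(C_1\times D_p)/\langle g_1\times\tau_p\rangle$.

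The hard part will be the matching step that guarantees the construction exhausts the whole moduli component rather than a proper subvariety. I would compute the number of moduli of pairs $(C_1,g_1)$ with the prescribed $\Z/2p$-signature by Riemann--Hurwitz and compare it with the dimension of the period domain of K3's carrying the given $\sigma$-action; the reverse fibration construction of the previous paragraph exhibits an inverse on a dense open set, so that the two irreducible moduli spaces of equal dimension are birational, whence every such $(S,\sigma)$ indeed arises as claimed. The delicate bookkeeping is to verify, case by case for $p\in\{3,5,7,11,13,17,19\}$, that the signatures and dimensions agree and that the stated fixed-locus conditions are exactly the ones forcing the generic (maximal-component) signature; this case analysis, rather than any single conceptual step, is where the real work lies.
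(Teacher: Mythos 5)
You should first note that the paper you are reviewing does not prove this statement at all: it is quoted verbatim as \cite[Theorem 1.1]{GP} (Garbagnati--Penegini) and used as a black box to produce new examples of isotrivial $K3$ surfaces, so the only meaningful comparison is with the proof in that reference. At the level of strategy your proposal does track the actual argument of \cite{GP}: an explicit construction of the product-quotient families $(C_1\times D_p)/(g_1\times\tau_p)$, a verification that their minimal resolutions are $K3$ surfaces on which $\id\times\delta_p$ descends to a non-symplectic automorphism of order $p$ (your eigenvalue computation on $u^i\,\mathrm{d}u/y$ and the descent argument are correct), and then a matching of this family against the lattice-theoretic classification of non-symplectic automorphisms of prime order due to Nikulin and Artebani--Sarti--Taki.

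The genuine gap is the final step, where you pass from ``two irreducible moduli spaces of equal dimension are birational'' to ``every such $(S,\sigma)$ arises from the construction.'' Birationality (or dominance) only shows that the \emph{generic} member of the moduli component is a product-quotient; it says nothing about an arbitrary member, and the product-quotient locus is not a priori closed in moduli. What actually closes this gap in \cite{GP} and in the classification results it relies on is the global Torelli theorem for $K3$ surfaces with a prescribed $\Z[\zeta_p]$-action together with surjectivity of the (eigen)period map for the family of cyclic covers: the periods of the constructed family fill the whole ball-quotient period domain, and Torelli then identifies \emph{every} pair $(S,\sigma)$ with the prescribed invariants with a member of the family. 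Your alternative ``reverse'' argument in the third paragraph cannot substitute for this: it assumes, without any construction, that $S$ carries a $\sigma$-equivariant isotrivial pencil with general fibre $D_p$ --- which is essentially the statement to be proven --- and for $p\in\{5,7,11,13\}$, where the fixed locus is allowed to contain no curves, there is not even a natural curve class on the rational surface $S/\sigma$ from which to launch such a pencil; note also the internal tension that if that paragraph worked, your moduli-matching paragraph would be superfluous. Finally, Serrano's theorem only yields a birational product-quotient model over $\overline{k}$, so the upgrade to the asserted isomorphism with the minimal resolution needs the uniqueness of minimal models of $K3$ surfaces spelled out, not just asserted.
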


Using this recipe, they gave explicit examples (including equations) of $K3$ surfaces for each one of the primes $p=3, 5, 7, 11$ (see \cite[Section 7]{GP}). We note that the surfaces corresponding to $p=3$ are elliptic, but most of the others are not.

\subsubsection{Symmetric Squares and geometrically simple Abelian Surfaces}\label{sec:Jac}
The simplest example of a product-quotient surface of mixed type is the symmetric square $\Sym^2(C)=(C\times C)/\langle\sigma\rangle$ of a smooth projective curve $C$ over $k$, where $\sigma$ is the automorphism permuting the two factors. Note that since the fixed locus of $\sigma$ is $1$-dimensional, the surface $\Sym^2(C)$ is smooth. We obtain the following Corollary. 

\begin{cor}\label{cor:Sym2}
    Let $C$ be a smooth projective connected curve of genus $2$ over $k$. Suppose that the Jacobian $J_C$ of $C$ has a mixture of potentially good ordinary and multiplicative reduction. Then \autoref{mainconj} is true for $\Sym^2(C)$ and $J_C$. 
\end{cor}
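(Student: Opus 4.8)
The plan is to deduce both assertions from \autoref{cor:newevidence} together with the birational invariance of $F^2$, after absorbing the word ``potentially'' into a base change.

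First I would treat $\Sym^2(C)$. As recorded just above, the quotient morphism $q\colon C\times C\to\Sym^2(C)$ is finite of degree $2$ onto the smooth projective surface $\Sym^2(C)$; in particular $\Sym^2(C)$ is geometrically dominated by the product $C\times C$. The hypothesis furnishes a finite extension $k'/k$ over which $(J_C)_{k'}$ has a mixture of good ordinary and split multiplicative reduction in the sense of \autoref{reddef} (good reduction, ordinarity of the abelian part, and splitness of the toric part are each preserved under further base change, so any $k'$ realizing the potential reduction type suffices). Over $k'$ the surface $\Sym^2(C)_{k'}=\Sym^2(C_{k'})$ is geometrically dominated by $C_{k'}\times C_{k'}$, whose two Jacobian factors both equal $(J_C)_{k'}$ and hence have good ordinary and split multiplicative reduction. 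Thus \autoref{cor:newevidence}, applied over $k'$, yields \autoref{mainconj} for $\Sym^2(C)_{k'}$.

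It then remains to descend from $k'$ to $k$. Since $\Sym^2(C)$ is a smooth projective surface, \cite[Theorem 8.1]{CT1} gives that $\CH_0(\Sym^2(C))_n$ is finite for every $n$, so \autoref{mainconj} for $\Sym^2(C)$ is equivalent to the quotient $F^2(\Sym^2(C))/F^2(\Sym^2(C))_{\dv}$ being torsion of finite exponent. A standard corestriction--restriction argument (see \cite[Lemma 3.3]{Gaz24}), exactly as in the proof of \autoref{maincor}, shows that finiteness of this quotient over $k'$ forces it to be torsion of finite exponent over $k$, whence finite by \autoref{finiteness-n-torsion-sus}. This establishes \autoref{mainconj} for $\Sym^2(C)$.

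For the genus $2$ statement I would invoke the classical geometry of the Abel--Jacobi map. When $g(C)=2$ the canonical class is a $k$-rational point $K_C\in\Pic^2(C)$, so translation by $-K_C$ identifies $\Pic^2(C)\cong J_C$ over $k$, and the Abel--Jacobi morphism $\Sym^2(C)\to\Pic^2(C)\cong J_C$ is a birational morphism: it contracts the unique $g^1_2\cong\P^1$ to the point $K_C$ and is an isomorphism elsewhere, exhibiting $\Sym^2(C)$ as the blow-up of $J_C$ at $K_C$. Both $\Sym^2(C)$ and $J_C$ are smooth projective with $k$-rational points (the identity of $J_C$, and a fibre of the $g^1_2$ over a point of $\P^1(k)$ for $\Sym^2(C)$), so the birational invariance of $F^2$ (the lemma preceding \autoref{thm:main1}) gives an isomorphism $F^2(\Sym^2(C))\cong F^2(J_C)$. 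Combined with the first part, this proves \autoref{mainconj} for $J_C$.

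The main obstacle is precisely the word ``potentially'' in the hypothesis: \autoref{cor:newevidence} demands honest good ordinary and split multiplicative reduction, so one cannot apply it over $k$ directly but must pass to the extension $k'$ where the reduction type is realized and then transport the conclusion back. The descent is the only genuinely nonformal point, and it relies on the equivalence of \autoref{mainconj} with finiteness of $F^2/F^2_{\dv}$ provided by \cite[Theorem 8.1]{CT1} and \autoref{finiteness-n-torsion-sus}; once this is granted, the birational picture for $\Sym^2(C)\to J_C$ makes the genus $2$ case immediate.
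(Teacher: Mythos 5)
Your proof is correct, and for the symmetric square it is essentially the paper's argument: the paper likewise dominates $\Sym^2(C)$ by the degree-$2$ quotient $\pi\colon C\times C\to\Sym^2(C)$ and feeds this into \autoref{thm:main1}, with the word ``potentially'' absorbed exactly as you do --- by reduction to a finite extension plus the norm/descent argument, which the paper leaves implicit inside \autoref{thm:main1} and you spell out via \autoref{cor:newevidence}, \cite[Theorem 8.1]{CT1} and \autoref{finiteness-n-torsion-sus}. The genuine divergence is the genus-$2$ statement. The paper does not pass through birational invariance of $F^2$: after assuming $C(k)\neq\emptyset$ (harmless, by the same descent), it forms the composition $\sigma\colon C\times C\xrightarrow{\pi}\Sym^2(C)\xrightarrow{f}\Pic^2(C)\simeq\Pic^0(C)=J_C$, notes that $f$ is birational so that $\sigma$ is generically finite of degree $2$, and applies \autoref{thm:main1} once more, now with target $J_C$. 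You instead first settle $\Sym^2(C)$ and then transfer the conclusion across the Abel--Jacobi blow-down $\Sym^2(C)\to J_C$ using the birational-invariance lemma that precedes \autoref{thm:main1}; this requires your extra (correct) observations that this map is a birational \emph{morphism} contracting only the fibre $\P(H^0(C,\omega_C))\cong\P^1$ over the canonical class, that $\Pic^2(C)\cong J_C$ over $k$ via translation by $K_C$, and that both surfaces have $k$-rational points. Both routes rest on the same geometric input (birationality of Abel--Jacobi in genus $2$); yours has the small advantages of working over $k$ itself for the second step (the canonical class replaces the basepoint $P_0\in C(k)$) and of invoking only the elementary birational-invariance lemma rather than the full generically-finite machinery a second time, while the paper's is a single uniform application of \autoref{thm:main1} to both targets.
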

\begin{proof} We may assume that $C(k)\neq\emptyset$. 
Consider the composition, 
\[\sigma: C\times C\xrightarrow{\pi}\Sym^2(C)\xrightarrow[\sim]{f} \Pic^2(C)\simeq \Pic^0(C)=J_C.\] Here $\pi$ is the degree $2$ quotient map, $f$ is the map that sends a divisor of degree $2$ to its class in $\Pic^2(C)$, and the isomorphism $\Pic^2(C)\simeq\Pic^0(C)$ is defined using a basepoint $P_0\in C(k)$. The map $f$ is birational (\cite[III.5, Theorem 5.1]{Milne}). Thus, this composition is a generically finite morphism of degree $2$ and the result follows from \autoref{thm:main1}. 
    
\end{proof}

In \cite{Gaz19} the first author showed that for an abelian variety $A$ over $k$ with good ordinary reduction, the group $F^2(A)/F^2(A)_{\dv}$ is torsion, but no more information was given. The above Corollary allows us to  improve this result for abelian surfaces. 
\begin{cor}\label{cor:absurface} Let $A$ be an abelian surface over $k$ with a mixture of potentially good ordinary and multiplicative reduction. Then \autoref{mainconj} is true for $A$ and for the Kummer $K3$ surface $\Kum(A)$ associated to $A$. 
\end{cor}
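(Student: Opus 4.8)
The plan is to reduce $A$, up to isogeny, to one of the two families of surfaces already treated in the article — a product of two elliptic curves, handled by Raskind--Spiess through \autoref{cor:newevidence}, or the Jacobian of a genus $2$ curve, handled by \autoref{cor:Sym2} — and then to transport \autoref{mainconj} along the isogeny and along the quotient map $A\dashrightarrow\Kum(A)$ by means of \autoref{thm:main1}. Two structural inputs drive this. First, every abelian surface is geometrically isogenous either to a product $E_1\times E_2$ of elliptic curves (the geometrically non-simple case) or to the Jacobian $J_C$ of a smooth projective genus $2$ curve $C$ (the simple case). Second, the reduction invariants of \autoref{reddef} (the toric rank of $\mathcal{A}_s^0$ and the ordinarity of its abelian part) are invariant under isogeny and stable under finite base change, so each factor $E_i$, respectively the Jacobian $J_C$, inherits from $A$ a mixture of potentially good ordinary and multiplicative reduction.

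As in the proofs of \autoref{thm:main2} and \autoref{thm:main1}, a norm argument combined with the finiteness of the $n$-torsion of $\CH_0$ of a surface lets us prove the conjecture after a single finite base change; that is, it suffices to show that $F^2(A_L)/F^2(A_L)_{\dv}$ is finite for one sufficiently large finite extension $L/k$. I would choose $L$ large enough that the relevant isogeny is defined over $L$ and that the potential reduction hypotheses become the honest good ordinary / split multiplicative conditions of \autoref{reddef} (once achieved, these persist over every further extension of $L$). In the product case, the isogeny exhibits $A_L$ as geometrically dominated by $E_1\times E_2$ with $E_1,E_2$ of good ordinary or split multiplicative reduction, so \autoref{cor:newevidence} applies directly. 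In the Jacobian case, \autoref{cor:Sym2} gives \autoref{mainconj} for $J_C$, and the isogeny $J_C\to A_L$, being a finite (hence generically finite) morphism, lets \autoref{thm:main1} deduce the conjecture for $A_L$. Since this argument applies verbatim with $k$ replaced by any finite extension, \autoref{mainconj} holds for $A_L$ for every finite $L/k$.

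For the Kummer surface I would invoke the degree $2$ generically finite rational map $A\dashrightarrow\Kum(A)$ arising from the resolution of the quotient $A/\langle-1\rangle$. Having just established \autoref{mainconj} for $A_L$ for every finite extension $L/k$, \autoref{thm:main1} applies with this map and yields the conjecture for $\Kum(A)$.

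The cycle-theoretic heavy lifting is entirely contained in \autoref{thm:main1}, \autoref{cor:newevidence}, and \autoref{cor:Sym2}, so I expect the only real difficulty to be the bookkeeping around the reduction hypothesis: one must verify that ``a mixture of potentially good ordinary and multiplicative reduction'' is faithfully transported along the isogeny to $E_1\times E_2$ or $J_C$ and is converted into the honest reduction condition of \autoref{reddef} after a single finite base change, uniformly enough to satisfy the ``for every finite $L/k$'' hypothesis that both applications of \autoref{thm:main1} require.
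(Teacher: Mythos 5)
Your proposal is correct and takes essentially the same route as the paper: after a finite base change and the standard norm argument, replace $A$ by an isogenous principally polarized surface that is either $E_1\times E_2$ or $J_C$ with $C$ of genus $2$, invoke the known product case and \autoref{cor:Sym2}, and transport the conclusion along the isogeny and along $A\dashrightarrow \Kum(A)$ via \autoref{thm:main1}. The one step you flag but do not carry out --- that the reduction type of \autoref{reddef} is preserved under isogeny --- is precisely what the paper's proof spends its second half verifying (via the N\'{e}ron--Ogg--Shafarevich criterion and the isogeny induced on the special fibers of the N\'{e}ron models, separating the good-reduction and bad-reduction cases), so this is a genuine verification to supply rather than mere bookkeeping.
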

\begin{proof} Since we have a generically finite rational map $\pi: A\dashrightarrow \Kum(A)$, the claim for the Kummer surface will follow from the claim for $A$. 
Since we are allowed to extend to a finite extension, we may assume that $A$ is isogenous to a principally polarized abelian surface $B$. 
The case when $B=E_1\times E_2$ is a product of elliptic curves is already proved in \cite[Theorem 1.2]{GL24}. The case when $B=J_C$ follows from \autoref{cor:Sym2}. The only thing we need to verify is that isogenous abelian surfaces have the same reduction type. 

It follows easily from the N\'{e}ron-Ogg-Shafarevich criterion that  $A$ has good reduction if and only if the same is true for $B$ (\cite[VII, Corollary 7.2]{Sil}). Suppose first that both surfaces have good reduction. In this case if $\phi:A\to B$ is an isogeny, then $\phi$ induces an isogeny $\overline{\phi}: \overline{A}\to\overline{B}$ between the reductions of $A, B$. Since $\ker(\overline{\phi})$ is finite, $B$ must have the same $p^r$-torsion as $A$ for every $r\geq 1$. 

Next assume that $A, B$ have bad reduction. By extending the base field if necessary, we may assume that $A, B$ have a mixture of good ordinary and split multiplicative reduction (see \autoref{reddef}). Then the connected component $\mathcal{A}_s^0$ of the zero element of the special fiber of the N\'{e}ron model $\mathcal{A}$ of $A$ fits into a short exact sequence 
\[0\to T\to \mathcal{A}_s^0\to X\to 0,\] where $T$ is a torus and $X$ is an ordinary abelian variety over the finite field $\kappa$ and the same holds for $\mathcal{B}_s^0$. The reduction morphism $\overline{\phi}: \mathcal{A}_s^0\to \mathcal{B}_s^0$ must preserve the toric pieces and since $\phi$ is a finite surjective map, the dimensions of the tori and the abelian quotients must be the same. To show that both abelian quotients are ordinary one can argue similarly to the good reduction case.

\end{proof}

\subsubsection{Fermat Surfaces}\label{sec:Fermat} We close this subsection by considering the diagonal hypersurfaces of degree $m$ in $\mathbb{P}_k^3$
\[X_m=\{a_0x_0^m+a_1x_1^m+a_2x_2^m+a_3x_3^m=0\}\subset\mathbb{P}_k^3,\] where $m\geq 4$, and $a_0,a_1,a_2, a_3\in k^\times$. 
Over the extension $L=k(\mu_m,\sqrt[m]{a_0},\ldots, \sqrt[m]{a_3})$ the surface $X_m$ becomes isomorphic to the Fermat surface 
\[F_m=\{x_0^m+x_1^m+x_2^m+x_3^m=0\}\subset\mathbb{P}_k^3.\]
Moreover, over the extension $L(\mu_{2m})$, there is a dominant rational map 
\[\phi: C_m\times C_m\dashrightarrow F_m,\]
constructed by Shioda (\cite[(2.1)]{Shi}), where $C_m$ is the Fermat curve of degree $m$, $C_m=\{x_0^m+x_1^m+x_2^m=0\}\subset\mathbb{P}_k^2$. The map $\phi$ is defined as follows,
\[((x_0:x_1:x_2), (y_0:y_1,y_2))\mapsto(x_0y_2:x_1y_2:\zeta_{2m}x_2y_0:\zeta_{2m}x_2y_1),\] 
where $\zeta_{2m}$ is a primitive $2m$-th root of unity. 
When $m=4$, $X_m$ is a $K3$ surface, which is geometrically isomorphic to a Kummer surface $\Kum(A)$ associated to an abelian surface $A$ isogenous to a self-product $E\times E$ of a CM elliptic curve (see \cite[Subsection 2.4]{GL24}). For every $m\geq 5$, $X_m$ is a surface of general type. 

The structure of the Jacobian variety $J_{\Tilde{C}_m}$ of the isomorphic Fermat curve $\Tilde{C}_m=\{x_0^m+x_1^m=x_2^m\}\subset\mathbb{P}_k^2$ over a finite field is well-known (\cite{Yui}). In fact it follows that for every prime $p\equiv 1\mod m$, $J_{\Tilde{C}_m}$ has good ordinary reduction (\cite[Proposition 4.1, Theorem 4.2]{Yui}). Thus, we obtain the following Corollary.

\begin{cor}
   Let $X_m$ be a diagonal hypersurface of degree $m$ in $\mathbb{P}_k^3$ where $k$ is a $p$-adic field with  $p\equiv 1\mod m$. Then \autoref{mainconj} is true for $X_m$.  
\end{cor}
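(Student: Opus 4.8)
The plan is to obtain the statement as an immediate application of \autoref{cor:newevidence}. That corollary has two hypotheses on a smooth projective surface $X$: that $X$ be geometrically dominated by a product $C_1\times C_2$ of smooth projective curves, and that the Jacobians $J_1,J_2$ have a mixture of good ordinary and split multiplicative reduction. Thus the whole proof reduces to verifying these two conditions for $X=X_m$, taking $C_1=C_2=C_m$ to be the Fermat curve of degree $m$.

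For the geometric domination I would first base change to the extension $L=k(\mu_m,\sqrt[m]{a_0},\ldots,\sqrt[m]{a_3})$, over which $X_m$ becomes isomorphic to the standard Fermat surface $F_m$. Over the further extension $L(\mu_{2m})$, Shioda's rational map $\phi\colon C_m\times C_m\dashrightarrow F_m$ of \cite[(2.1)]{Shi} is defined and dominant. Base changing to $\overline{k}$ and composing with the isomorphism $F_{m,\overline{k}}\cong X_{m,\overline{k}}$ then produces a dominant rational map $C_m\times C_m\dashrightarrow X_{m,\overline{k}}$, which is precisely the assertion that $X_m$ is geometrically dominated by $C_m\times C_m$. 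Since geometric domination is a condition over $\overline{k}$, the fact that Shioda's map is only defined over an extension is harmless.

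For the reduction hypothesis I would invoke the analysis of the Jacobian of the Fermat curve over finite fields. Since $C_m$ is isomorphic to $\widetilde{C}_m=\{x_0^m+x_1^m=x_2^m\}$ and $p\equiv 1\bmod m$ forces $p\nmid m$, the curve $C_m$ has good reduction over $k$, and Yui's results (\cite[Proposition 4.1, Theorem 4.2]{Yui}) show that this reduction is ordinary exactly because $p\equiv 1\bmod m$; hence $J_{C_m}$ has good ordinary reduction over $k$. To match \autoref{reddef} I would simply note that pure good ordinary reduction is the special case $T=0$, so that the split-multiplicative part of the hypothesis is vacuous. With both factors equal to $C_m$, both Jacobians satisfy the condition, and \autoref{cor:newevidence} yields \autoref{mainconj} for $X_m$. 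The only genuinely nontrivial input, and hence the main obstacle, is the arithmetic of the Fermat Jacobian encoded in Yui's computation of its Newton polygon, which is precisely what pins down the congruence $p\equiv 1\bmod m$; once that is granted, the remaining assembly is formal.
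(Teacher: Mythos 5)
Your proposal is correct and takes essentially the same approach as the paper: the paper's proof is precisely the discussion preceding the corollary in Section~\ref{sec:Fermat}, which combines the isomorphism $X_{m,L}\cong F_m$ over $L=k(\mu_m,\sqrt[m]{a_0},\ldots,\sqrt[m]{a_3})$, Shioda's dominant rational map $C_m\times C_m\dashrightarrow F_m$ over $L(\mu_{2m})$, and Yui's result that $J_{\widetilde{C}_m}$ has good ordinary reduction when $p\equiv 1\bmod m$, before invoking \autoref{cor:newevidence}. The details you spell out (that $p\equiv 1\bmod m$ forces $p\nmid m$, and that pure good ordinary reduction is the $T=0$ case of \autoref{reddef}) are correct points the paper leaves implicit.
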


\subsection{Some computations of Suslin's Homology}\label{sec:Suscomputs} We close this article by giving some examples illustrating that our methods have the potential to give explicit information on Suslin's homology. 

Let $X$ be a smooth projective surface over a $p$-adic field $k$ and $j:U\hookrightarrow X$ a dense open immersion. 
As we saw in the proof of \autoref{thm:main2}, when $X-U$ is $0$-dimensional, we have an equality $F^2(U)=F^2(X)$.
On the other hand, if $X-U$ is a divisor, it is very often the case that $F^2(U)$ is much larger than $F^2(X)$. 
This can already be seen in the following classical examples. 

\begin{exm}
    Let $X=\mathbb{P}^1_k\times \mathbb{P}^1_k$ and $U=\G_m\times \G_m$. Since $X$ is a rational surface, $F^2(X)=0$. On the other hand, there is an isomorphism $F^2(U)\simeq K_2^M(k)$. The $K$-group $K_2^M(k)$ is the direct sum of a finite group and an uncountable uniquely divisible group.
     \end{exm}

    \begin{exm}\label{ruled-surface example}

    Consider $X=C\times \mathbb{P}^1_k$, where $C$ is a smooth projective connected curve over $k$ with $C(k) \neq \emptyset$. Since $X$ is a ruled surface, we again have $F^2(X)=0$. Consider the open subvariety $U=C\times \G_m$. Then there is an isomorphism $F^2(U)\cong V(C)$ (see \cite[1.7]{KY} and \cite[2.4.4]{RS}), which is the direct sum of a finite group and a divisible group (\autoref{CFTcurves}).
    Assume further that genus of curve, $g(C) \geq 1$, then \( V(C) \otimes_{\mathbb{Z}} \mathbb{Q} \) is an infinite-dimensional \( \mathbb{Q} \)-vector space (\cite[Page~210]{Sz}). Furthermore, if \( C \) is a Mumford curve, then \( V(C)_{\mathrm{tor}} \) is infinite (see \cite[Page 178]{AS}). Hence $V(C)_{\tor}$, the torsion subgroup and $V(C)/V(C)_{\tor}$, the uniquely divisible subgroup of $V(C)$ (= $F^2(U)$) are both infinite.

\end{exm}  

 One thing that these two examples have in common is that their Albanese variety $Alb_U$ has nontrivial toric piece. Generalizing these examples, B. Kahn and T. Yamazaki \cite{KY}  gave a $K$-theoretic description of $F^2(U)$ for 
$U=C_1\times C_2$ a product of two smooth quasi-projective curves over $k$, generalizing the work of Raskind and Spiess (\cite{RS}) on products of projective curves. Let $\overline{C_i}$ be the smooth compactification of $C_i$ and $X=\overline{C_1}\times\overline{C_2}$ so that we have an open immersion $j:U\hookrightarrow X$. Suppose that $\overline{C_i}-C_i$ is the support of a reduced divisor $\mathfrak{m}_i$. They showed   an isomorphism
 \begin{equation}\label{opencurves}
     F^2(U)\simeq K(k;J_{\mathfrak{m}_1}, J_{\mathfrak{m}_2})
 \end{equation} (\cite[1.7]{KY}), 
    where $J_{\mathfrak{m}_i}$ is the generalized Jacobian of $\overline{C_i}$ corresponding to the modulus $\mathfrak{m}_i$ (see section \ref{sec:genJac}) and $K(k;J_{\mathfrak{m}_1}, J_{\mathfrak{m}_2})$ is the Somekawa $K$-group attached to the semi-abelian varieties $J_{\mathfrak{m}_1}, J_{\mathfrak{m}_2}$ (see \cite{Som}).
    We can deduce from this that an equality $F^2(X)=F^2(U)$ holds if and only if $J_{\mathfrak{m}_i}$ coincides with the usual Jacobian of $\overline{C_i}$ for $i=1,2$, if and only if $Alb_X=Alb_U$. Recalling the dimension of the maximal subtorus of the generalized Albanese variety from \autoref{genalbsection} we see that this happens if and only if the support of each $\mathfrak{m}_i$ is at most one rational point. 

    One can thus pose the question: how large can $F^2(U)$ be when $Alb_X=Alb_U$? In what follows we focus on the simplest case that this happens, namely when $U=X-C$ for some projective smooth connected curve $C\hookrightarrow X$ (see Example \ref{onecomponent}). We show that even in this case the structure of $F^2(U)$ can really vary.   

\begin{exm}\label{P2 example}
 Let \( C \) be a smooth projective connected curve contained in \( X = \mathbb{P}^2_k \) with \( C(k) \neq \emptyset \), and let \( U = \mathbb{P}^2_k - C \). Then we have \( F^{1}(X) = F^{2}(X) = 0 \). Moreover, since \( Alb_{U}=Alb_X=0 \) (see Example \ref{onecomponent}), we have \( F^{1}(U) = F^{2}(U) \). Therefore,
\[
F^{2}(U) \cong \operatorname{coker} \left( \CH^{2}(X,1) \xrightarrow{g} SK_{1}(C) \right)
\]
(see~\autoref{lem:maindiagram}). The group \( SK_{1}(C) \cong k^{\times} \oplus V(C) \). By the projective bundle formula (see \cite{Bl2}), we have $\CH^{2}(\mathbb{P}^2_k, 1) \cong \CH^{0}(k, 1) \oplus \CH^{1}(k, 1) \oplus \CH^{2}(k, 1)$. The group $ \CH^{2}(k, 1)$ vanishes by a dimension argument, and \( \CH^{0}(k, 1) \) vanishes by \cite[Corollary~4.2]{MVW}. Hence,
$\CH^{2}(\mathbb{P}^2_k, 1) \cong \CH^{1}(k, 1) \cong k^{\times}$. Also, the composite map
\[
\CH^{2}(X,1) \xrightarrow{g} SK_{1}(C) \xrightarrow{\pi_{2}} k^{\times}
\]
has finite cokernel (see Proposition~\ref{prop:secsoffibration}). Therefore, we have $F^2(U) \cong V(C)$ (modulo finite kernel and cokernel).
In particular, if \(V(C)_{\mathrm{tor}}\) is infinite, then so is \(F^2(U)_{\mathrm{tor}}\). 
Furthermore, if the genus of \( C \) i.e., $g(C) \geq  1$, then \( V(C) \otimes_{\mathbb{Z}} \mathbb{Q} \) is an infinite-dimensional \( \mathbb{Q} \)-vector space (\cite[Page~210]{Sz}). It then follows from the above discussion that \( F^{2}(U) \otimes_{\mathbb{Z}} \mathbb{Q} \) is also an infinite-dimensional \( \mathbb{Q} \)-vector space. \\ 

\end{exm}
\begin{rem}
 This example can be further extended to surfaces which are $\mathbb{P}^1$-bundle.  These examples show that \(F^2(U)\) can be too large. In fact, they suggest that the torsion divisible subgroup \((F^2(U)_{\mathrm{div}})_{\mathrm{tor}}\), the uniquely divisible quotient $F^2(U)_{\mathrm{div}}/(F^2(U)_{\mathrm{div}})_{\mathrm{tor}}$
 and the finite summand of \(F^2(U)\) can all be arbitrarily large, even when \(F^2(X) = 0\).
\end{rem}

 On the other extreme, generalizing the example of Kahn-Yamazaki, the next proposition shows that when the embedding $C\hookrightarrow X$ is ``special", then an equality $F^2(X)=F^2(U)$ can happen.

\begin{prop}\label{prop:secsoffibration} Let $\pi:X\to C$ be a smooth  fibration, where $C$ is a smooth projective curve over $k$ with $C(k)\neq\emptyset$. Let $s:C\hookrightarrow X$ be a section of the fibration and  $j:U\hookrightarrow X$ be the complement of the section. Then $F^2(U)=F^2(X)$. 
\end{prop}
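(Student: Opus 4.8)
The plan is to remove the section by means of the diagram of \autoref{lem:maindiagram} and then to kill the resulting $K$-theoretic defect using the section itself. We may assume $U(k)\neq\emptyset$, so that the generalized Albanese maps are defined. The first step is purely structural: since $s(C)$ is a smooth projective connected curve, Example~\ref{onecomponent} applies with $r=n=1$, so the toric part $T$ of $Alb_U$ vanishes and $Alb_U=Alb_X$. In other words, removing a section does not enlarge the generalized Albanese variety.

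Next I would specialize the diagram of \autoref{lem:maindiagram} to the divisor $D=s(C)$. Because $T(k)=0$, the map $\varepsilon$ is zero and $\gamma\colon Alb_U(k)\to Alb_X(k)$ is an isomorphism. Reading off the commutativity relation $\gamma\circ\alb_U=\alb_X\circ\beta$ and using that $\gamma$ is injective gives $F^2(U)=\ker(\alb_U)=\beta^{-1}(F^2(X))$. Hence $\beta$ restricts to a surjection $\alpha\colon F^2(U)\to F^2(X)$ with $\ker(\alpha)=\ker(\beta)$, and by exactness of the middle column of the diagram $\ker(\beta)=\img(f)\cong\cok(g)$, where $g\colon \CH^2(X,1)\to SK_1(s(C))=SK_1(C)$ is the restriction map $H^3_M(X,\Z(2))\to H^3_M(s(C),\Z(2))$ furnished by \autoref{lem:cohomology Ex-seq}. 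Thus the proposition becomes equivalent to the single assertion that $g$ is surjective.

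The heart of the argument is to recognize $g$ as the pullback along the section and to split it. Under the identification $s(C)\cong C$, the map $g$ is precisely the pullback $s^\star$ on motivic cohomology in the degree $H^3_M(-,\Z(2))$. Since $X$ and $C$ are smooth, the pullback $\pi^\star\colon SK_1(C)=H^3_M(C,\Z(2))\to H^3_M(X,\Z(2))=\CH^2(X,1)$ is defined, and contravariant functoriality together with $\pi\circ s=\id_C$ yields $s^\star\circ\pi^\star=(\pi\circ s)^\star=\id$ on $SK_1(C)$. Therefore $\pi^\star$ is a section of $g$, so $g$ is split surjective, $\cok(g)=0$, and $\alpha$ is an isomorphism, i.e. $F^2(U)=F^2(X)$.

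The argument requires no hard computation; its only delicate point is the bookkeeping that the map $g$ of \autoref{lem:maindiagram} is genuinely the section-restriction $s^\star$, so that the splitting $\pi^\star$ applies to it. Once this identification is in place, the mere existence of the section forces surjectivity of $g$, and no further obstacle remains.
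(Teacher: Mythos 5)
Your proof is correct, and your reduction of the proposition to the surjectivity of $g\colon \CH^2(X,1)\to SK_1(s(C))$ is exactly the paper's reduction; but your proof of that surjectivity takes a genuinely different and noticeably shorter route. The paper splits $SK_1(C)\cong V(C)\oplus k^\times$ and hits the two factors separately: the $k^\times$-factor via the intersection-theoretic computation $s^\star([X_{P_0}])=[X_{P_0}]\cdot[s(C)]=1$ on N\'eron--Severi groups, and the $V(C)$-factor via Somekawa's isomorphism $V(C)\simeq K(k;J_C,\G_m)$, which shows $SK_1(C)$ is generated by decomposable $(2,1)$-cycles, combined with the surjectivity of $s^\star\colon\Pic^0(X_L)\to J_C(L)$, proved by dualizing $\pi_\star\circ s_\star=1_{J_C}$ on Albanese varieties. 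You instead observe that $g$ is the pullback $s^\star$ on $H^3_M(-,\Z(2))$ and that $\pi^\star$ is a section of it, since $s^\star\circ\pi^\star=(\pi\circ s)^\star=\id$. At bottom both arguments exploit the retraction $\pi\circ s=\id_C$, but you use it directly at the level of motivic cohomology, while the paper uses it only after passing to decomposable cycles and Picard--Albanese duality. What your route buys is brevity and uniformity: no Somekawa $K$-groups, no decomposition of $SK_1(C)$, no intersection computation. What it costs --- and you correctly flag this as the one delicate point --- is the compatibility statement that the map $g$ coming from the localization sequence for compactly supported motives coincides, under the identification $M^c\cong M$ for proper schemes, with the functorial pullback induced by $M(s)$, so that the splitting $\pi^\star$ (which exists because $\pi$ is a morphism of smooth varieties; indeed $\pi$ is even flat, so flat pullback of higher Chow cycles already suffices) genuinely composes with $g$ to the identity. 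This compatibility is standard in the Cisinski--D\'eglise formalism, and the paper itself implicitly relies on a weaker instance of it when it invokes the commutative square relating $g$ to $s^\star$ on $\Pic^0(X_L)\otimes L^\times$, so your argument is sound. One small stylistic point: you need not ``assume'' $U(k)\neq\emptyset$; it is automatic here, since for $P\in C(k)$ the fiber $X_P$ is a smooth projective curve with $s(P)\in X_P(k)$, hence has infinitely many $k$-points over the $p$-adic field $k$, almost all of which lie in $U$.
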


\begin{proof} Let $J_C$ be the Jacobian of $C$, and $P_0\in C(k)$ a fixed rational point. 
Keeping the same notation as in the proof of \autoref{thm:main2}, we have an exact sequence
\[\CH^2(X,1)\xrightarrow{g}SK_1(D)\xrightarrow{f}F^1(U)\xrightarrow{\beta}F^1(X)\to 0.\] Since $D=s(C)\cong C$ is a smooth curve, it follows that $Alb_U=Alb_X$ (see Example \ref{onecomponent}). Thus, the conclusion of the proposition will follow from the following.

\textbf{Claim:} The map $\CH^2(X,1)\xrightarrow{g}SK_1(s(C))$ is surjective.

As in the previous example, we have a decomposition $SK_1(s(C))\cong SK_1(C)\cong V(C)\oplus k^\times$, where $V(C)=\ker(SK_1(C)\xrightarrow{N}k^\times)$ is the kernel of the norm map. Note first that the pullback map $s^\star:\NS(X)\to\NS(s(C))\simeq\Z$ is surjective. To see this, consider the fiber $X_{P_0}$ above $P_0$. Then 
\[s^\star([X_{P_0}])=[X_{P_0}]\cdot [s(C)]=1.\]

It remains to show that $V(C)$ is contained in the image of $g$.  It is well-known (\cite[Theorem 2.1]{Som}) that there is an isomorphism $V(C)\simeq K(k;J_C,\G_m)$, with the Somekawa $K$-group attached to $J_C$ of $C$ and $\G_m$. This group admits a surjection 
\[\bigoplus_{L/k\text{ finite}}J_C(L)\times L^\times\twoheadrightarrow K(k;J_C,\G_m).\] In particular, the group $SK_1(C)\simeq \CH^2(C, 1)$ is generated by decomposable $(2,1)$-cycles. We have a commutative square,
\[\begin{tikzcd} \bigoplus_{L/k} \Pic^0(X_L)\otimes L^\times \ar{r} \ar{d}{s^\star} & \CH^2(X,1) \ar{d}{g} \\
\bigoplus_{L/k} J_C(L)\otimes L^\times \ar{r} & SK_1(s(C)) 
\end{tikzcd}.\]  Thus, it suffices to show that for every finite extension $L/k$ the pullback map $s^\star:\Pic^0(X_L)\to \Pic^0(C_L)= J_C(L)$ is surjective. 
The morphisms $\pi, s$ induce homomorphisms $\pi_\star:Alb_X\to Alb_C=J_C$, and $s_\star: Alb_C\to Alb_X$ such that $\pi_\star\circ  s_\star=1_{J_C}$. The dual homomorphisms,
$\widehat{\pi_\star}: \Pic^0(C)=J_C\to \Pic^0(X)$, $\widehat{s_\star}:\Pic^0(X)\to\Pic^0(C)$ thus satisfy $\widehat{s_\star}\circ\widehat{\pi_\star}=1_{J_C}$. It follows that the map $\widehat{s_\star}:\Pic^0(X)\to\Pic^0(C)$ is surjective, which is none other than the pullback $s^\star$. 

\end{proof}

\begin{exm} Generalizing the result of Kahn and Yamazaki, let $X=C\times C$ be the self-product of a smooth projective curve over $k$ with $C(k)\neq\emptyset$. Consider the diagonal embedding $\Delta:C\hookrightarrow X$ and let $U=X-\Delta$. Then $F^2(U)=F^2(X)$. 
\end{exm}

\begin{exm}\label{ex:bielliptic} Let $A=E_1\times E_2$ be a product of elliptic curves over $k$. Let $G$ be a finite group acting on $E_1$ by translations by a fixed torsion point $P_0\in E_1(k)$ and on $E_2$ by automorphisms such that $E_2/G\cong\mathbb{P}^1_k$. Consider the diagonal action of $G$ on $A$. This is a fixed point free action, and hence the quotient surface $X=A/G$ is smooth. Such surfaces are called \textit{bielliptic}, or \textit{hyperelliptic}. The Albanese variety of $X$ is $Alb_X=E_1'$, where $E_1'$ is the elliptic curve $E_1/\langle P_0\rangle$, and the Albanese map $\alb_X: X\to E_1'$ is an elliptic fibration. The surface $X$ has Kodaira dimension $0$ and geometric genus $p_g=0$. Thus, it follows by \cite[Th\'{e}or\`{e}me D]{CTR} and the main result of \cite{BKL}   that $F^2(X)$ is finite.  It follows from \autoref{prop:secsoffibration} that for every section $s: E_1'\hookrightarrow X$  of the fibration,  $F^2(X-s(E_1'))=F^2(X)$. In particular, it is finite. 
\end{exm}

\begin{rem}
    The above examples suggest that the structure of $F^2(U)$ in the case when $U = X - C$ depends on $V(C)$, the cycle-theoretic invariant of $C$. At the same time, it also depends on the embedding of the curve, as shown in \autoref{prop:secsoffibration}. 
\end{rem}

\vspace{10pt}

\end{document}